\newtheorem{theorem}{Theorem}[section]
\newtheorem{lemma}[theorem]{Lemma}
\newtheorem{proposition}[theorem]{Proposition}
\theoremstyle{definition}
\theoremstyle{remark}
\newtheorem{remark}[theorem]{Remark}
\theoremstyle{remark}
\numberwithin{equation}{section}
\def\fddto{\xrightarrow{\textit{f.d.d.}}}
\newcommand{\ind}{{\bf 1}}
\def\inddd#1{{\ind}_{\left\{#1\right\}}} 
\newcommand{\proba}{\mathbb P}
\newcommand{\esp}{{\mathbb E}}
\newcommand{\inv}{^{-1}}
\newcommand{\cov}{{\rm{Cov}}}
\newcommand{\eqnh}{\begin{eqnarray*}}
\newcommand{\eqne}{\end{eqnarray*}}
\newcommand{\eqnhn}{\begin{eqnarray}}
\newcommand{\eqnen}{\end{eqnarray}}
\newcommand{\equh}{\begin{equation}}
\newcommand{\eque}{\end{equation}}
\def\summ#1#2#3{\sum_{#1 = #2}^{#3}}
\def\sif#1#2{\sum_{#1=#2}^\infty}
\newcommand{\eqd}{\stackrel{d}{=}}
\def\topp#1{^{(#1)}}
\def\abs#1{\left|#1\right|}
\def\sabs#1{|#1|}
\def\ccbb#1{\left\{#1\right\}} 
\def\pp#1{\left(#1\right)}
\def\spp#1{(#1)}
\def\floor#1{\left\lfloor #1 \right\rfloor}
\def\sfloor#1{\lfloor #1 \rfloor}
\def\vv#1{{\boldsymbol #1}}
\def\qmand{\quad\mbox{ and }\quad}
\def\qmwith{\quad\mbox{ with }\quad}
\def\mfa{\mbox{ for all }}
\def\mmas{\mbox{ as }}
\def\wt#1{\widetilde{#1}}
\def\wb#1{\overline{#1}}
\def\what#1{\widehat{#1}}
\def\limn{\lim_{n\to\infty}}
\def\limsupn{\limsup_{n\to\infty}}
\def\weakto{\Rightarrow}
\def\R{{\mathbb R}}
\def\N{{\mathbb N}} 
\def\BB{{\mathbb B}}
\newcommand{\calP}{{\mathcal P}}
\newcommand{\calM}{\mathcal{M}}
\newcommand{\calZ}{\mathcal{Z}}
\renewcommand{\d}{{\rm d}}
\newcommand{\aswto}{\stackrel{a.s.w.}\to}
\date{}
\def\Var{\mathop{\rm Var}}
\def\Cov{\mathop{\rm Cov}}
\def\R{\mathbb{R}}
\def\C{\mathbb{C}}
\def\N{\mathbb{N}}
\def\P{\mathbb{P}}
\def\E{\mathbb{E}}
\renewcommand{\i}{{\rm i}}
\title[A functional central limit theorem for Karlin model]{A functional central limit theorem for weighted occupancy processes of the Karlin model}
\author{Jaime Garza}
\address
{
Department of Mathematical Sciences\\
University of Cincinnati\\
2815 Commons Way\\
Cincinnati, OH, 45221-0025, USA.
}
\email{garzaje@mail.uc.edu}
\author{Yizao Wang}
\address
{
Department of Mathematical Sciences\\
University of Cincinnati\\
2815 Commons Way\\
Cincinnati, OH, 45221-0025, USA.
}
\email{yizao.wang@uc.edu}
\begin{document}\sloppy
\begin{abstract}
A functional central limit theorem is established for weighted occupancy processes of the Karlin model. The weighted occupancy processes take the form of, with $D_{n,j}$ denoting the number of urns with $j$-balls after the first $n$ samplings, $\sum_{j=1}^na_jD_{n,j}$ for a prescribed sequence of real numbers $(a_j)_{j\in\N}$. The main applications are limit theorems for random permutations induced by Chinese restaurant processes with $(\alpha,\theta)$-seating with $\alpha\in(0,1), \theta>-\alpha$. An example is briefly mentioned here, and full details  are provided in an accompanying paper.\end{abstract}

\maketitle

\section{Introduction}
The infinite urn scheme has been a classical model in probability theory, dating back at least to the 1960s in the works of \citet{bahadur60number}, \citet{darling67some} and \citet{karlin67central}. 
In particular, Darling and Karlin first considered frequencies decaying polynomially, that is, the sampling frequencies $(p_j)_{j\in\N}$ of the infinite urn model satisfy $p_j = L(j) j^{-1/\alpha}$ for some slowly varying function $L$ at infinity and some parameter $\alpha\in(0,1]$. They also examined the case when $p_j$ decays faster than any polynomial rates 
(for a precise definition, see \eqref{eq:RV} below with $\alpha=0$).
For these cases,
Karlin established central limit theorems for various statistics, 
and therefore the model with such frequencies
 has been sometimes referred to as the Karlin model (although his seminal paper investigated other frequencies as well). 

Among many statistics of interest, Karlin established central limit theorems for the so-called occupancy and odd-occupancy processes. 
 If we let $D_{n,j}$ denote the number of urns with exactly $j$ balls after the first $n$ rounds of sampling, then these are the counting processes
\equh\label{eq:Karlin}
S_n:=\summ j1n D_{n,j} \qmand S_n^{\rm odd}:=\summ j1n D_{n,j}\inddd{j \rm~is~odd}, n\in\N,
\eque
respectively.
 See \citet{gnedin07notes} for a survey on the infinite urn schemes.
Surprisingly, the functional central limit theorems for the Karlin model have not been developed until the last 10 years, even Karlin's results already indicated non-diffusive scaling limits for occupancy and odd-occupancy processes more than half a century ago. For functional central limit theorems on statistics of Karlin model and its variations, we mention \citep{durieu16infinite,chebunin16functional,chebunin22functional,durieu20infinite,iksanov22functional,iksanov22small}. 
In particular, in \citep{durieu16infinite} functional central limit theorems for occupancy and odd-occupancy processes have been established. Moreover, an additional randomization of the Karlin model was introduced and it was shown that the randomized odd-occupancy process scales to a fractional Brownian motion with Hurst index $H = \alpha/2\in(0,1/2)$. This result revealed a connection of the odd-occupancy processes to stochastic processes with long-range dependence \citep{pipiras17long,samorodnitsky16stochastic}.

In the current paper,
we 
establish a functional central limit theorem for
 the weighted sum of occupancy counts $(D_{n,j})_{j\in\N}$ with prescribed weights $\vec a = (a_j)_{j\in\N}$ defined as
\[
S_n\equiv S_{n,\vec a}:= \summ j1n a_j D_{n,j}.
\]
The statistics in this form include the (odd-)occupancy processes in \eqref{eq:Karlin} and hence we refer to $S_n$ as {\em weighted occupancy processes}. Throughout, $a_0 = 0$.  
In particular, with $a_j = 1$ and $a_j = \inddd{j~\rm~is~odd}$, $S_{n,\vec a}$ becomes the two cases in \eqref{eq:Karlin} 
considered by Karlin respectively; 
see also \citep{durieu16infinite}.
 Another extensively studied case is $a_j = \inddd{j\ge j_0}$, and $S_{n,\vec a} =  \sum_{j\ge j_0}D_{n,j}$ is the counting process of all urns with at least $j_0$ balls. This case was again investigated already by Karlin; see also \citep{chebunin16functional}. 
As an immediate consequence one obtains limit theorems for $D_{n,j_0}$ by simply writing $D_{n,j_0} = \sum_{j\ge j_0}D_{n,j} - \sum_{j\ge j_0+1}D_{n,j}$ and a continuous mapping argument. We are not aware of any other examples of $S_{n,\vec a}$ investigated in the literature.

For the general weights $(a_j)_{j\in\N}$ we impose $|a_{i+j}-a_i|\le Cj^\beta$ for some $\beta\in[0,\alpha^2/2)$ and 
all
$i,j\in \N\cup \{0\}$. When $\beta = 0$ this implies that the weights are bounded, including the occupancy and odd-occupancy processes investigated earlier.  Our main result is a functional central limit theorem for the counting process $(S_{\floor{nt}})_{t\in[0,1]}$ appropriately normalized, and we identify a new family of Gaussian processes in the limit. Our results can be viewed as an extension of the functional central limit theorems in \citep{durieu16infinite} (more precisely, Theorem 2.3 therein for the {\em quenched} version of the randomized model with fixed randomization $\varepsilon_j\equiv 1$, corresponding exactly the two cases in \eqref{eq:Karlin} here; see Remark \ref{rem:explicit} for more details). While we follow the standard Poissonization approach, it turned out that the analysis becomes much more involved even in the case where $(a_j)_{j\in\N}$ is a bounded sequence. The technical challenges came from (a) proving the tightness for the Poissonized process where, due the fact that now $a_j$ may be unbounded (when $\beta>0$) we no longer have simple moment estimates, and (b) carrying out of the de-Possonization step, where we use a refined analysis on discrete local central limit theorems for Poisson and binomial distributions.

Our main motivation came from random matrix theory: using the general functional central limit theorem established here we shall investigate asymptotic behaviors of random permutation matrices induced by the Chinese restaurant processes with $(\alpha,\theta)$-seating. We briefly mention one application in Section \ref{sec:RMT}: a functional central limit theorem for the linear statistics of eigenvalues of the aforementioned permutation matrices. 
In the accompanying paper \citep{garza24limit} we provide full details and also another application on limit fluctuations of characteristic polynomials of the permutation matrices outside the support of its limit  (which is the uniform measure on the unit circle $\{z\in \C:|z| = 1\}$).

\subsection{Main results}
 We first recall the setup of an infinite urn model with sampling frequencies $(p_j)_{j\in\N}$, where $p_1\ge p_2\ge\cdots$ and $\sif j1 p_j = 1$. Each round, $Y_i$ is an independent sampling with $\proba(Y_i = j) = p_j$ indicating the label of the urn that a ball is thrown in, and we consider the following statistics at each time $n\in\N$:
\[
K_{n,\ell} := \summ i1n \inddd{Y_i = \ell}, \quad \ell\in\N \qmand D_{n,j}:=\sif \ell1 \inddd{K_{n,\ell} = j}, \quad j\in\N. 
\]
In words, $K_{n,\ell}$ is the number of balls in the urn labeled by $\ell$ by round $n$ and $D_{n,j}$ is the number of urns with exactly $j$ balls by round $n$. The key here is the decay rate of the frequencies. 
  Let $0<\alpha<1$. The following assumption was introduced first by \citet[Section 4]{darling67some} and \citet{karlin67central} about the same time (Darling only investigated weak laws of large number and Karlin investigated central limit theorems)
\begin{equation}\label{eq:RV}
    v(x):=\sum_{i=1}^\infty \delta_{1/p_i}([0,x])=x^\alpha L(x), \quad x\ge0,
\end{equation}
where $L$ is a slowly varying function at infinity.
We note that if 
\equh\label{eq:p RV}
p_n\sim \mathsf C_0n^{-1/\alpha},
\eque as $n\to\infty$ for some constant $\mathsf C_0>0$, 
 then $v(x)\sim \mathsf C_0^{\alpha}x^\alpha$ 
 as $x\rightarrow \infty$. Throughout we write $a_n\sim b_n$ if $\limn a_n/b_n = 1$. More generally, one can consider $p_n = n^{-1/\alpha}L(n)$ for some slowly varying function $L$ at infinity. However we do not pursue this generality, as for our main applications on 
 random matrices, $L(n)$ converges to a constant almost surely, and hence \eqref{eq:p RV} suffices. 
 The proofs are already quite involved without considering a more general $L$.

We next introduce the limit process that will arise in our limit theorem. 
Let $\calM_\alpha$ be a Gaussian random measure defined on $\R_+\times \Omega'$ with intensity measure $\alpha r^{-\alpha-1}\d r\d \P'$ and $(N'(t))_{t\geq0}$ be a standard Poisson process on $(\Omega',\P')$. 
     Set
\equh\label{eq:calZ}
\calZ_{\alpha}(t)\equiv \calZ_{\alpha,\vec a}(t):=\int_{\R_+\times\Omega'} \pp{a_{N'(rt)}-\esp'a_{N'(rt)}}\calM_\alpha(\d r,\d\omega'), t\ge 0,
\eque
for a certain sequence of real numbers $\vec a= (a_j)_{j\in\N_0}$ with $\N_0 = \{0,1,\dots\}$ and $a_0 = 0$. Note that the integrand can also be written as 
\[
a_{N'(rt)} - \esp a_{N'(rt)} = \sif j1 a_j(\inddd{N'(rt) = j} - \proba'(N'(rt) = j)).
\] 
We refer to \citep{samorodnitsky94stable} for background on stochastic integrals with respect to Gaussian random measures, and will comment on the conditions imposed on $(a_j)_{j\in\N}$ in a moment. We only recall the facts that if $\calM$ is a Gaussian random measure on a measurable space $(E,\mathcal E)$ with control measure $\mu$, then $\int f\d \calM$ is a centered Gaussian random variable for $f\in L^2(E,\mu)$ and $\cov(\int f\d\calM,\int g\d\calM) = \int fg\d\mu$ with $f,g\in L^2(E,\mu)$.

For our functional central limit theorem, set 
\equh\label{eq:W_n}
W_n(t):=\summ j1{\floor{nt}} a_j\pp{D_{\floor{nt},j}-\esp D_{\floor{nt},j}}, t\in[0,1], n\in\N.
\eque
Write also $\sigma_n := v(n)^{1/2}, n\in\N$. 
Under \eqref{eq:p RV}, we have $\sigma_n\sim \mathsf C_0^{\alpha/2} n^{\alpha/2}$. 
Throughout, the constant $C$ may change from line to line. 
\begin{theorem}\label{thm:2}
Assume that $(p_j)_{j\in\N}$ satisfies \eqref{eq:p RV} with $\alpha\in(0,1)$. Assume that there exists $\beta\in[0,\alpha^2/2)$ such that $(a_j)_{j\in\N}$ satisfies that 
\equh\label{eq:a increment}
|a_{i+j}-a_i|\le Cj^\beta,\quad \mfa i,j\in\N_0.
\eque
   Then,
    \[
\pp{ \frac{{W}_n(t)}{\sigma_n}}_{t\in[0,1]}\weakto \pp{\calZ_{\alpha}(t)}_{t\in[0,1]}, \mmas n\rightarrow \infty,
\]
    in  the space of c\`adl\`ag functions $D([0,1])$  with $J_1$ topology \citep{billingsley99convergence},  where the process $\calZ_{\alpha}$ is as in \eqref{eq:calZ}.
\end{theorem}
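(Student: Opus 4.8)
The plan is to follow the standard Poissonization scheme, which decouples the otherwise dependent urn counts. First I would introduce the Poissonized process: replace the deterministic number of samplings $n$ by an independent Poisson clock, so that the urns become independent, with urn $\ell$ receiving a $\mathrm{Poisson}(np_\ell t)$ number of balls by time $t$. Writing $\widetilde W_n(t)$ for the corresponding centered weighted occupancy sum, the independence across $\ell$ reduces the covariance of $\widetilde W_n(s)/\sigma_n$ and $\widetilde W_n(t)/\sigma_n$ to a single sum $\sum_\ell$ of one-urn covariances, and under \eqref{eq:p RV} this sum converges, via the substitution $r = np_\ell$ and the regular variation of $v$, to $\int_{\R_+} \cov(a_{N'(rs)}, a_{N'(rt)})\,\alpha r^{-\alpha-1}\,\d r$, which is exactly $\cov(\calZ_\alpha(s),\calZ_\alpha(t))$. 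Finiteness of this integral near $r = 0$ uses $a_0 = 0$ together with the increment bound \eqref{eq:a increment} (so $|a_{N'(rt)}| \le C N'(rt)^\beta$ and the integrand is $O(r^{1})$ near zero after taking expectations over the Poisson variable), and near $r = \infty$ it uses the variance bound on $a_{N'(rt)}$, again controlled by \eqref{eq:a increment} with $\beta < \alpha/2$; the condition $\beta < \alpha^2/2$ is the stronger one needed later for tightness. Convergence of all finite-dimensional distributions then follows from the Lindeberg central limit theorem for the triangular array of independent summands, checking the Lyapunov/Lindeberg condition with the fourth (or $(2+\delta)$-th) moment estimate that again reduces to the increment bound on $\vec a$.

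Next I would establish tightness of $(\widetilde W_n/\sigma_n)_{n}$ in $D([0,1])$ with the $J_1$ topology. The natural route is a moment bound of the form $\esp|\widetilde W_n(t) - \widetilde W_n(s)|^{p} \le C\, \sigma_n^{p}\,|t-s|^{1+\gamma}$ (or the two-increment version $\esp[|\widetilde W_n(t)-\widetilde W_n(u)|^p |\widetilde W_n(u)-\widetilde W_n(s)|^p]$) for suitable $p$ and $\gamma>0$, invoking the standard criterion from \citep{billingsley99convergence}. Because $a_j$ may now grow like $j^\beta$, one no longer has the uniform-in-$j$ control that made the bounded case easy; instead one expands the centered sum over independent urns, and the key computation is to bound $\esp|a_{N'(\lambda t)} - a_{N'(\lambda s)} - (\text{means})|^{p}$ for a single Poisson urn with parameter $\lambda \in\{np_\ell\}$, exploiting that the increment $N'(\lambda t) - N'(\lambda s)$ is $\mathrm{Poisson}(\lambda(t-s))$ and $|a_{N'(\lambda t)} - a_{N'(\lambda s)}| \le C\,|N'(\lambda t) - N'(\lambda s)|^\beta$. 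Summing over $\ell$ and using \eqref{eq:p RV} to convert $\sum_\ell$ into an integral against $r^{-\alpha-1}\d r$ produces the required $|t-s|$ power; this is where $\beta < \alpha^2/2$ is used to guarantee the exponent is large enough (one needs $p$ large but $p\beta$ small relative to what $\alpha$ allows, and the interplay pins down the threshold $\alpha^2/2$). I expect this tightness estimate for the Poissonized process to be the main obstacle — it is the part the authors flag as where "the analysis becomes much more involved" — since it requires careful bookkeeping of how the $j^\beta$ growth interacts with the tail of the Poisson increments and the $r^{-\alpha-1}$ weight, rather than a one-line moment bound.

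Finally I would carry out de-Poissonization to transfer both the f.d.d.\ convergence and tightness from $\widetilde W_n$ to $W_n$. Let $\Pi_n(t)$ be the Poisson($nt$) clock; then $\widetilde W_n(t) = W_{\Pi_n(t)}(t)$ in distribution (with $\Pi_n$ independent of the underlying sampling), and $\Pi_n(t)/n \to t$ with fluctuations of order $n^{-1/2}$, which is negligible compared with the normalization $\sigma_n \asymp n^{\alpha/2}$ only after one controls how much $W_m(t)$ can change as $m$ varies over an interval of length $O(\sqrt n\log n)$ around $nt$. The standard device is to write $W_n(t) - \widetilde W_n(t)$ (suitably coupled) and bound it using a local central limit theorem for the Poisson/binomial distribution to compare $\proba(\Pi_n(t) = m)$ for nearby $m$; the excerpt signals that a "refined analysis on discrete local central limit theorems for Poisson and binomial distributions" is needed here, presumably because one must control the difference $D_{m,j} - D_{m',j}$ uniformly and the unbounded weights again prevent a crude bound. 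Once the de-Poissonization error is shown to be $o_\P(\sigma_n)$ uniformly in $t\in[0,1]$, combining with the convergence $\widetilde W_n/\sigma_n \weakto \calZ_\alpha$ gives $W_n/\sigma_n \weakto \calZ_\alpha$ in $D([0,1])$ with the $J_1$ topology, completing the proof.
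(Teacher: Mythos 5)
Your overall scheme (Poissonization, covariance plus Lyapunov argument for finite-dimensional convergence, tightness, de-Poissonization) matches the paper's, and your account of the finite-dimensional step is correct in outline. The genuine gap is in the tightness step. You propose to derive $\esp|\widetilde W_n(t)-\widetilde W_n(s)|^{2p}\le C\sigma_n^{2p}|t-s|^{1+\gamma}$ and then invoke the standard moment criterion from Billingsley. That bound is not obtainable in that form. Applying Rosenthal's inequality to the sum of independent urn contributions gives a mixed estimate of the type
\[
\esp\bigl|\widehat W_n(t)-\widehat W_n(s)\bigr|^{2p}\le C\bigl(\sigma_n^{2}|t-s|^{\alpha}+\sigma_n^{2p}|t-s|^{\alpha p}\bigr),
\]
because the Lyapunov term $\sum_\ell\esp|\cdots|^{2p}$ scales only as $\sigma_n^2$ (a single power of the effective number of active urns, not $\sigma_n^{2p}$) and carries only $|t-s|^\alpha$ with $\alpha<1$. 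Normalizing by $\sigma_n^{2p}$ leaves $\sigma_n^{-2(p-1)}|t-s|^\alpha+|t-s|^{\alpha p}$, whose first term does not supply the exponent $1+\gamma$ uniformly in $|t-s|$, so the Billingsley $|t-s|^{1+\gamma}$ criterion does not close. (This obstruction is already present in the bounded case; it is not caused by the unbounded weights.) The paper gets around it by supplementing the moment estimate with an almost sure pathwise control of the form $|\widehat W_n(t)-\widehat W_n(s)|\le C\bigl(N(nt)-N(ns)+n(t-s)\bigr)$ from the background Poisson clock, and then running a dyadic chaining argument that exploits both bounds at different scales. Moreover, to make even the mixed moment bound hold with the stated powers, the paper truncates to urns holding at most $n/\log n$ balls (defining $\widehat W_n$) and shows separately that the discarded urns contribute $o_\P(\sigma_n)$ uniformly in $t$; with $a_j$ growing like $j^\beta$, this truncation is essential, as a single overloaded urn would otherwise dominate the $2p$-th moment. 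Without these two additional ingredients your tightness argument does not go through.

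A secondary, smaller point concerns the de-Poissonization. The coupling identity is not $\widetilde W_n(t)=W_{\Pi_n(t)}(t)$: after coupling the Poissonized urn counts at the random time $\lambda_n(t)=\tau_{\floor{nt}}/n$ with the fixed-$n$ counts $K_{\floor{nt},\ell}$, the counting parts coincide, but the centered processes differ by a deterministic correction $\sum_\ell\bigl(q_\ell(\floor{nt})-\widetilde q_\ell(n\lambda_n(t))\bigr)$, a difference of means evaluated at slightly different (and random, via $\lambda_n$) arguments. It is this correction term, not the raw occupancy count differences, that requires the refined local central limit theorems for Poisson and binomial laws, and making the resulting bound uniform over $t\in[0,1]$, including $t$ near zero, is the bulk of the de-Poissonization work in the paper.
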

\begin{remark}
We first comment on condition \eqref{eq:a increment} on $(a_j)_{j\in\N}$.
\begin{enumerate}[(i)]
\item
The condition \eqref{eq:a increment} implies  $|a_j|\le C j^\beta$, and this weaker condition alone  with $\beta<\alpha/2$ is needed for $(\calZ_\alpha(t))_{t\ge 0}$ to be a family of well-defined Gaussian random variables. See Proposition \ref{prop:Z}. If say $a_j\sim Cj^\beta$ with $\beta>\alpha/2$ we expect a phase transition: a different normalization should be applied and a different stochastic process arises. This is left for future work.
\item 
We impose condition \eqref{eq:a increment} with $\beta < \alpha/2$ to show that the process $(\calZ_\alpha(t))_{t\ge0}$ has a version with H\"older-continuous sample paths of index $\gamma < \alpha$.
See again Proposition~\ref{prop:Z}. 
\item Condition \eqref{eq:a increment} with the more restrictive constraint $\beta<\alpha^2/2$ (recall $\alpha\in(0,1)$) is needed to show that the Poissonized model is tight in $D([0,1])$, which is necessary to show the functional central limit theorem for the Poissonized version in Section \ref{sec:tightness}. Assuming the weaker condition $|a_j|\le Cj^\beta$ with $\beta<\alpha/2$ and no condition on the increments, we managed to prove the convergence of finite-dimensional distributions for the Poissonized model in Proposition \ref{prop:fdd}; although this result alone is not enough to prove Theorem \ref{thm:2} (in fact, even the central limit theorem with $t=1$ fixed cannot be obtained). Note that we also need \eqref{eq:a increment} with $\beta<\alpha/2$ in the de-Poissonization step. It is plausible that the constraint $\beta<\alpha^2/2$ can be relaxed to $\beta<\alpha/2$ in Section \ref{sec:tightness}.
\item 
It is not clear to us  when $a_j$ is unbounded whether the functional central limit theorem can be established without any further conditions beside $|a_j|\le Cj^\beta$. It is known that when $a_j$ changes sign, the regularity of the sample paths can be quite poor. For a closely related issue, see \citep[Remark 2]{durieu20infinite}. 
\end{enumerate}    \end{remark}

      \begin{remark}
For each $j\in\N$, set 
\[
         \calZ_{\alpha,j}(t):=\int_{\R_{+}\times \Omega'}(\inddd{N'(rt)=j}-\P'(N'(rt)=j))\calM_\alpha(\d r,\d \omega'), t\ge 0.
\]
Our process $\calZ_\alpha$ is in fact a linear combination of the above with coefficients $(a_j)_{j\in\N}$. That is, 
\[
\sif j1a_j\calZ_{\alpha,j}(t) = \int_{\R_+\times\Omega_+}\sif j1a_j(\inddd{N'(rt)=j}-\P'(N'(rt)=j))\calM_\alpha(\d r,\d \omega'), t\ge 0,
\]
where the left-hand side is understood as the $L^2$-limit of the infinite series and the right-hand side is a stochastic integral. 

The processes $\calZ_{\alpha,j}$ and their linear combinations have appeared in earlier investigations of Karlin models. 
It is worth noting in the literature, limiting Gaussian processes are often characterized by their explicit covariance functions, with no stochastic-integral representations provided. However, stochastic-integral representations offer additional insight into Gaussian processes.
In fact, in \citep{durieu16infinite}, the stochastic-integral representations can only be read implicitly in the proofs, and they are explained explicitly only later in \citep{fu20stable}.
\end{remark}
\begin{remark}
We mention the recent paper by \citet{iksanov22small} (see also \citep[Proposition 4.1]{iksanov22functional}), who investigated Karlin model with $\alpha=0$ in \eqref{eq:RV} (note that in this case \eqref{eq:p RV} does not make sense; but instead $p_j$ decays faster than any polynomial rates), and obtained new Gaussian processes as the scaling limits for various statistics. In short, the case $\alpha=0$ of the Karlin model behaves drastically different from the case $\alpha\in(0,1)$ investigated here, yet it appears to have received much less attention. The Karlin model with $\alpha=0$ is closely related to the Bernoulli sieves \citep{gnedin20nested,iksanov16renewal}. See also \citep{iksanov22small,iksanov22functional} for related references. 
\end{remark}

\subsection{An application in random matrix theory}
\label{sec:RMT}
The key connection to random permutation matrices is that, thanks to Kingman's representation theorem \citep{kingman82coalescent}, the occupancy counts  $(D_{n,1},\dots,D_{n,n})$ of an infinite urn scheme have the same joint law of the cycle counts $(C_{n,1},\dots,C_{n,n})$ of the corresponding exchangeable random partition from a Chinese restaurant process (where $C_{n,j}$ is the number of the $j$-cycles of the partition of $[n]=\{1,2,...,n\}$), if the rate $(p_j)_{j\in\N}$ is appropriately chosen (strictly speaking, this statement involves regular conditional probability; see \citep{garza24limit} for more details). Since for a permutation the cycle structure determines completely the eigenvalues of the corresponding matrix, many statistics of interest can be expressed as weighted sum of cycle counts, which have the same law as the weighted occupancy counts investigated here. This connection allows us to study random permutation matrices by 
investigating classical questions on infinite urn schemes. This idea was first due to \citet{diaconis94eigenvalues} and has been applied extensively to random permutation matrices either uniform or those induced by Ewens measures on symmetric groups with parameter $\theta>0$, corresponding to the Chinese restaurant processes with $(0,\theta)$-seating (see e.g.~\citep{arratia03logarithmic,bahier19characteristic,benarous15fluctuations,wieand00eigenvalue,bahier22smooth}).
At the same time, the Karlin model with parameter $\alpha\in(0,1)$ can be associated to Chinese restaurant processes with $(\alpha,\theta)$-seating ($\theta>-\alpha$), and this indicates again the importance of Karlin model from another aspect.  Surprisingly, we have found very few results on random permutations induced by $(\alpha,\theta)$-seating with $\alpha\in(0,1)$, and the connection to the infinite urn model has not been fully exploited yet.  

Here we mention briefly an application in our accompanying paper \citep{garza24limit}, just to provide a flavor of the results to be established. We skip most 
of the background including the definition of Chinese restaurant processes, as more details and other applications can be found therein. 

A Chinese restaurant process consists of a sequence of exchangeable random partitions, $(\Pi_n)_{n\in\N}$, each is a random partition of $[n]$. Assuming further a uniformly random ordering of each block of the partition leads to a random permutation of $[n]$, we let $M_n$ denote the associated permutation matrix.  Our goal is to study the asymptotic behavior of eigenvalues of $M_n$ as $n\to\infty$. 
Namely for each $M_n$, letting 
$e^{\i2\pi\theta_{n,1}},\dots,e^{\i2\pi\theta_{n,n}}$ 
represent its $n$ eigenvalues, with $\theta_{n,j}\in[0,1),j=1,\dots,n$, 
we are interested in the asymptotic behavior of the following linear statistics of the spectrum of a matrix $M_n$ for some test function $f:[0,1)\to \R$:
\[
S_n(f):=\summ i1n f(\theta_{n,i}).
\]

Before stating the limit theorem, a key notion that needs to be recalled is the asymptotic frequencies of the random partitions $(\Pi_n)_{n\in\N}$, which, ordered in decreasing order are denoted by 
$(P_j^\downarrow)_{j\in\N}$. (In words, $P_j$ without ordering is the asymptotic proportion of customers sitting at the $j$-th table.)  It is well-known that they are non-deterministic, strictly positive, and satisfy $\sif i1 P_i^\downarrow = 1$, and in particular
 for $\alpha\in(0,1), \theta>-\alpha$, 
\[
P_n^\downarrow\sim \pp{\frac{\mathsf s_{\alpha,\theta}}{\Gamma(1-\alpha)}}^{1/\alpha}n^{-1/\alpha},
\quad \mbox{ almost surely.}
\]
Here and throughout, $\Gamma$ is the gamma function. 
The random variable  $\mathsf s_{\alpha,\theta}$ referred to as the {\em $\alpha$-diversity}, is strictly positive and is measurable with respect to 
\[
\calP := \sigma\pp{(P_i^\downarrow)_{i\in\N}}.
\]

The limit theorems on random matrices are stated in the notion of almost sure weak convergence. 
Given a Polish space $(\mathcal X,d)$ and random elements $(X_n)_{n\in\N}, X$ in it, we say $X_n$ converges almost surely weakly to $X$ as $n\to\infty$ with respect to $\calP$, denoted by $X_n\aswto X$ with respect to $\calP$, if for all continuous and bounded functions $g$ on $\mathcal X$, $\limn \esp \pp{g(X_n)\mid \calP} = \esp \pp{g(X)\mid\calP}$ almost surely.
For a slightly more general notion, see \citep{grubel16functional}.

Introduce
\[
a_j(f):=jR_j(f) \qmwith R_j(f) := \frac1j \summ k0{j-1}f\pp{\frac kj} - \int_0^1f(x)\d x, j\in\N.
\]

\begin{theorem}
For functions $f:[0,1)\to \R$ such that $(a_j)_{j\in\N}\equiv (a_j(f))_{j\in\N}$ satisfy the assumptions in Theorem \ref{thm:2}, 
we have
\[
\pp{\frac{S_{\floor {nt}}(f) - \esp \pp{S_{\floor{nt}}(f)\mid\calP}}{n^{\alpha/2}}}_{t\in[0,1]}\aswto \pp{\frac{\mathsf s_{\alpha,\theta}}{\Gamma(1-\alpha)}}^{1/2}\pp{\calZ_{\alpha}(t)}_{t\in[0,1]}
\]
with respect to $\calP$ in $D([0,1])$, where on the right-hand side the process $\calZ_{\alpha}$ is as in \eqref{eq:calZ} and independent from $\calP$. 
\end{theorem}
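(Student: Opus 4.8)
\emph{Overview.} The statement is a transfer of Theorem~\ref{thm:2} from the infinite urn scheme to the random permutation matrices induced by a Chinese restaurant process with $(\alpha,\theta)$-seating, via Kingman's representation. The plan is in three stages: (1) rewrite the linear spectral statistic $S_n(f)$ as a weighted occupancy count; (2) condition on the asymptotic frequencies $\calP=\sigma((P_i^\downarrow)_{i\in\N})$ and apply the quenched (conditional) version of Theorem~\ref{thm:2} with the random sampling frequencies $p_j = P_j^\downarrow$; (3) handle the random normalization coming from the $\alpha$-diversity $\mathsf s_{\alpha,\theta}$.

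\emph{Step 1: spectral statistic as weighted cycle count.} For a permutation with cycle-type $(C_{n,1},\dots,C_{n,n})$, a $j$-cycle contributes the $j$-th roots of unity $e^{\i 2\pi k/j}$, $k=0,\dots,j-1$, to the spectrum. Hence
\[
S_n(f) = \summ j1n C_{n,j}\summ k0{j-1}f\pp{\frac kj} = \summ j1n C_{n,j}\, j\pp{R_j(f)+\int_0^1 f} = \summ j1n a_j(f)\,C_{n,j} + \pp{\summ j1n jC_{n,j}}\int_0^1 f.
\]
Since $\summ j1n jC_{n,j} = n$ is deterministic, the term $n\int_0^1 f$ cancels upon centering by $\esp(S_{\floor{nt}}(f)\mid\calP)$, so
\[
S_{\floor{nt}}(f)-\esp\pp{S_{\floor{nt}}(f)\mid\calP} = \summ j1{\floor{nt}} a_j(f)\pp{C_{\floor{nt},j}-\esp\pp{C_{\floor{nt},j}\mid\calP}}.
\]
By Kingman's representation theorem, conditionally on $\calP$ the cycle counts $(C_{n,j})_j$ have the same joint law as the occupancy counts $(D_{n,j})_j$ of the infinite urn scheme with (now random) frequencies $p_j=P_j^\downarrow$; so the right-hand side equals in conditional law the process $W_n(t)$ of \eqref{eq:W_n} built from those frequencies and weights $\vec a=(a_j(f))_j$.

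\emph{Step 2: quenched FCLT.} On the event (of full $\P$-probability) that $P_n^\downarrow\sim (\mathsf s_{\alpha,\theta}/\Gamma(1-\alpha))^{1/\alpha}n^{-1/\alpha}$, the random frequencies satisfy \eqref{eq:p RV} with $\mathsf C_0 = \mathsf C_0(\omega) = (\mathsf s_{\alpha,\theta}/\Gamma(1-\alpha))^{1/\alpha}$, whence $v(x)\sim \mathsf C_0^\alpha x^\alpha = (\mathsf s_{\alpha,\theta}/\Gamma(1-\alpha))x^\alpha$ and $\sigma_n\sim(\mathsf s_{\alpha,\theta}/\Gamma(1-\alpha))^{1/2}n^{\alpha/2}$. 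The weights $a_j(f)$ are assumed to satisfy the hypotheses of Theorem~\ref{thm:2}, so the theorem applies conditionally: for $\P$-a.e.\ fixed $\omega$,
\[
\pp{\frac{W_n(t)}{\sigma_n}}_{t\in[0,1]}\weakto (\calZ_\alpha(t))_{t\in[0,1]} \quad\text{in }D([0,1]),
\]
and, dividing by the deterministic-in-$\omega$ scalar, $\pp{W_n(t)/n^{\alpha/2}}_{t\in[0,1]}\weakto (\mathsf s_{\alpha,\theta}/\Gamma(1-\alpha))^{1/2}(\calZ_\alpha(t))_{t\in[0,1]}$. Translating this conditional weak convergence into the almost-sure-weak-convergence statement with respect to $\calP$ amounts to noting that $\mathsf s_{\alpha,\theta}$ is $\calP$-measurable (so it is a constant under the conditional law), that $\calZ_\alpha$ is constructed on an auxiliary space $(\Omega',\P')$ independent of $\calP$, and that the conditional laws of $W_n/n^{\alpha/2}$ above depend measurably on $\omega$ through $\mathsf C_0(\omega)$; then for every bounded continuous $g$ on $D([0,1])$,
\[
\esp\pp{g\pp{W_n/n^{\alpha/2}}\mid\calP}\longrightarrow \esp\pp{g\pp{(\mathsf s_{\alpha,\theta}/\Gamma(1-\alpha))^{1/2}\calZ_\alpha}\mid\calP}\quad\text{a.s.},
\]
which is exactly $\aswto$ with respect to $\calP$.

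\emph{Main obstacle.} The routine parts are Steps~1 and the algebra of Step~2; the delicate point is the passage from ``Theorem~\ref{thm:2} holds for each deterministic frequency sequence satisfying \eqref{eq:p RV}'' to ``it holds conditionally, $\P$-a.s., for the random frequencies $P_j^\downarrow$.'' One must check that the hypotheses of Theorem~\ref{thm:2}—the asymptotic $P_n^\downarrow\sim \mathsf C_0 n^{-1/\alpha}$ with a \emph{fixed} (random) constant and the ordering $P_1^\downarrow\ge P_2^\downarrow\ge\cdots$—hold simultaneously for a.e.\ $\omega$, which is the content of the Poisson–Dirichlet / two-parameter GEM a.s.\ asymptotics; and one must verify the measurability needed to integrate the conditional convergence against bounded continuous test functions (e.g.\ via a regular conditional probability given $\calP$, under which the $Y_i$ are i.i.d.\ with the prescribed random law, together with the independent copy of $\calZ_\alpha$ on $\Omega'$). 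This is essentially the ``quenched'' bookkeeping already present in \citep{durieu16infinite,grubel16functional}, so I would cite those frameworks and keep this step short, referring to \citep{garza24limit} for the full details as the excerpt indicates.
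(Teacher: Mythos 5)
Your proposal is correct and follows precisely the strategy the paper itself sketches: identify $S_n(f)$ as a weighted cycle count (with the deterministic $n\int_0^1 f$ cancelling under centering), invoke Kingman's representation to replace cycle counts by occupancy counts conditionally on $\calP$, verify that the random frequencies $P_j^\downarrow$ satisfy \eqref{eq:p RV} almost surely with $\mathsf C_0 = (\mathsf s_{\alpha,\theta}/\Gamma(1-\alpha))^{1/\alpha}$, and apply Theorem \ref{thm:2} in a quenched sense. Note, however, that this paper does not actually prove the statement — it explicitly defers all details (including the regular-conditional-probability bookkeeping that your Step~2 glosses) to the accompanying paper \citep{garza24limit} — so your proposal is being compared against the paper's proof \emph{sketch} rather than a written proof, and at that level the match is exact.
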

The class of functions satisfying $|a_j(f)|\le C$ include for example indicator functions $f = \ind_{[a,b)}$ and the class $W^{1,p}([0,1])$ with $p\in[1,\infty]$ (absolutely continuous functions with $f'\in L^p([0,1])$). 

Our results are to be compared with limit theorems for linear statistics of random permutation matrices following Ewens measures by \citet{benarous15fluctuations}. There are two noticeable differences. First, they assumed $\sif j1 a_j^2/j =\infty$ and $\max_{j=1,\dots,n}|a_j| = o(s_n)$ with $s_n = (\summ j1n a_j^2/j)^{1/2}$ (guaranteed by the assumption that $f$ is of bounded variation), and in this case it was shown that the normalization for the central limit theorem is $s_n$ which is of order at most $O(\sqrt{\log n})$ (this is the Gaussian regime, and when $\sif j1 a_j^2/j<\infty$ the limit is a weighted sum of centered Poisson random variables). Here, the normalization is $n^{\alpha/2}$. Second, they only established a central limit theorem (for $t=1$ fixed), and their method based on Feller's coupling does not allow one to establish a functional central limit theorem as ours, which provide additional information on the temporal evolution of eigenvalues. (Instead, the Feller's coupling allows one to establish another type of functional central limit theorem as in \citet{hansen90functional}.)

The rest of the paper is organized as follows. Section \ref{sec:Z} provides more properties on the limit process $\calZ$.  Section \ref{sec:Karlin} provides the proof for the Poissonized model. Section \ref{sec:dePoissonization} provides the proof of the de-Poissonization step.

\subsection*{Acknowledgements}
The authors would like to thank two anonymous referees for very careful reading of and helpful comments on the manuscript.
Y.W.~was grateful to Alexander Iksanov for several illuminating discussions. J.G.~and Y.W.~were partially supported by Army Research Office, US (W911NF-20-1-0139).
 Y.W.~was also partially supported by Simons Foundation (MP-TSM-00002359), and a Taft Center Fellowship (2024--2025) from Taft Research Center at University of Cincinnati.

\section{The limit process}\label{sec:Z}
Recall the process $\calZ_\alpha = \calZ_{\alpha,\vec a}$ defined in \eqref{eq:calZ}, and that it depends on a sequence $(a_j)_{j\in\N}$ with $a_0 = 0$. 

\begin{proposition}\label{prop:Z}
Assume 
\equh\label{eq:a_j}
|a_j|\le Cj^\beta \qmwith \beta\in\Big[0,\frac\alpha 2\Big),
\eque Then,  $(\calZ_\alpha(t))_{t\ge 0}$ is a family of centered Gaussian random variables with
\equh\label{eq:cov Z}
\cov(\calZ_\alpha(s),\calZ_\alpha(t)) =\alpha t^\alpha \sif k1 \frac{F_{\vec a,s,t}\topp 1(k)\Gamma(k-\alpha)}{k!} -  \alpha(s+t)^\alpha \sif k2 \frac{F_{\vec a,s,t}\topp 2(k)\Gamma(k-\alpha)}{k!},
\eque
where $0<s<t$, 
and 
\begin{align*}
F_{\vec a,s,t}\topp1(k) &:= \summ j1k\binom kj \pp{\frac st}^j\pp{1-\frac st}^{k-j}a_ja_k,\\
F_{\vec a,s,t}\topp2(k) &:= \summ j1{k-1}\binom kj \pp{\frac s{s+t}}^j\pp{\frac t{s+t}}^{k-j}a_ja_{k-j}.
\end{align*}
If furthermore 
\equh\label{eq:a_j increment}
|a_{i+j}-a_i|\le Cj^\beta \mfa i,j\in\N  \qmwith \beta\in\Big[0,\frac\alpha 2\Big),
\eque 
 then $(\calZ_\alpha(t))_{t\ge0}$ forms a stochastic process with a version that is $\gamma$-H\"older-continuous for 
  every $\gamma\in(0,\alpha/2)$. 
\end{proposition}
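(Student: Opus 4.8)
The plan is to split the proposition into two parts: (a) showing $\calZ_\alpha(t)$ is a well-defined centered Gaussian variable with the stated covariance, and (b) upgrading to H\"older continuity of a version. For part (a), I would first verify that the integrand
\[
g_{t}(r,\omega') := a_{N'(rt)}(\omega') - \esp' a_{N'(rt)}
\]
lies in $L^2(\R_+\times\Omega', \alpha r^{-\alpha-1}\d r\,\d\P')$. Writing $\pi_k(\lambda) = e^{-\lambda}\lambda^k/k!$ for the Poisson weights, one has $\esp'[g_t(r,\cdot)^2] = \var'(a_{N'(rt)}) \le \esp'[a_{N'(rt)}^2] \le C\,\esp'[(N'(rt))^{2\beta}]$ by \eqref{eq:a_j}. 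Splitting the $r$-integral at $r=1/t$ (say): for small $r$, $N'(rt)=0$ with high probability so $g_t$ vanishes to high order and $\esp'[g_t^2] = O((rt)^{?})$ kills the $r^{-\alpha-1}$ singularity at $0$ (here I would be careful: $\var'(a_{N'(\lambda)}) \le \P'(N'(\lambda)\ge 1)\cdot(\text{bound}) \lesssim \lambda$ for small $\lambda$ using $a_0=0$, giving integrability near $0$ since $\alpha<1$); for large $r$, $\esp'[(N'(rt))^{2\beta}] \sim (rt)^{2\beta}$ so the integrand behaves like $r^{2\beta - \alpha - 1}$, integrable at $\infty$ precisely because $2\beta < \alpha$. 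This is where the hypothesis $\beta<\alpha/2$ is used. Then $\calZ_\alpha(t)$ is Gaussian by the standard theory of Gaussian integrals recalled in the excerpt. For the covariance, I would use the bilinearity formula $\cov(\int f\,\d\calM, \int g\,\d\calM) = \int fg\,\d\mu$ with $f=g_s$, $g=g_t$:
\[
\cov(\calZ_\alpha(s),\calZ_\alpha(t)) = \alpha\int_0^\infty \cov'\!\pp{a_{N'(rs)}, a_{N'(rt)}} r^{-\alpha-1}\,\d r.
\]
Expanding $a_m = \sum_j a_j \indd{m=j}$, the inner covariance becomes $\sum_{j,k} a_j a_k[\P'(N'(rs)=j, N'(rt)=k) - \P'(N'(rs)=j)\P'(N'(rt)=k)]$; using that $N'(rt)-N'(rs)$ is independent of $N'(rs)$ and Poisson$(r(t-s))$, one rewrites the joint probability via a binomial thinning. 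The remaining $r$-integrals are all of the form $\int_0^\infty \pi_k(cr)\, r^{-\alpha-1}\,\d r = c^\alpha \Gamma(k-\alpha)/(\alpha\,k!)$ (for $k\ge 1$), which after collecting terms with $c = t$, $c=s$, $c=s+t$ yields exactly \eqref{eq:cov Z} with the functions $F^{(1)}, F^{(2)}$; I'd just need to match the bookkeeping of which $(j,k)$ pairs survive. I expect this Fubini-and-Poisson-moment computation to be routine but the main bookkeeping effort.

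For part (b), the goal is a Kolmogorov--Chentsov estimate: show $\esp\abs{\calZ_\alpha(t) - \calZ_\alpha(s)}^2 \le C\abs{t-s}^{2\gamma'}$ for some $\gamma' > \gamma$, in fact I would aim for $\esp\abs{\calZ_\alpha(t)-\calZ_\alpha(s)}^2 \le C|t-s|^{\alpha}$ on compacts (up to endpoint issues), so that Gaussianity boosts the second moment to all moments and Kolmogorov's criterion gives a version that is $\gamma$-H\"older for every $\gamma < \alpha/2$. Since $\calZ_\alpha(t)-\calZ_\alpha(s) = \int (g_t - g_s)\,\d\calM_\alpha$ is Gaussian with variance $\alpha\int_0^\infty \esp'[(a_{N'(rt)} - a_{N'(rs)})^2 - (\text{mean terms})]\, r^{-\alpha-1}\d r \le \alpha\int_0^\infty \esp'[(a_{N'(rt)} - a_{N'(rs)})^2]\, r^{-\alpha-1}\d r$, the key input is the \emph{increment} bound \eqref{eq:a_j increment}: on the event that $N'(rt) - N'(rs) = h$, we have $\abs{a_{N'(rt)} - a_{N'(rs)}} \le C h^\beta$, so $\esp'[(a_{N'(rt)}-a_{N'(rs)})^2] \le C\,\esp'[(N'(rt)-N'(rs))^{2\beta}] \le C(r(t-s))^{2\beta} \vee C\, r(t-s)$ — again using $a$-increments vanish when $h=0$, and $\esp'[P^{2\beta}] \lesssim \lambda$ for small $\lambda$, $\lesssim \lambda^{2\beta}$ for large $\lambda$, for $P\sim\text{Poisson}(\lambda)$. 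Substituting and splitting the $r$-integral at $r = 1/(t-s)$: the large-$r$ piece contributes $\int_{1/(t-s)}^\infty (r(t-s))^{2\beta} r^{-\alpha-1}\,\d r = C(t-s)^{2\beta}\cdot(t-s)^{\alpha - 2\beta} = C(t-s)^\alpha$ (convergent at $\infty$ since $2\beta<\alpha$), and the small-$r$ piece contributes $\int_0^{1/(t-s)} r(t-s)\, r^{-\alpha-1}\,\d r = C(t-s)\cdot(t-s)^{\alpha - 1} = C(t-s)^\alpha$ (convergent at $0$ since $\alpha<1$). So $\esp\abs{\calZ_\alpha(t)-\calZ_\alpha(s)}^2 \le C(t-s)^\alpha$, and Gaussianity gives $\esp\abs{\calZ_\alpha(t)-\calZ_\alpha(s)}^{2p} \le C_p(t-s)^{p\alpha}$; choosing $p$ large, Kolmogorov--Chentsov produces a version that is $\gamma$-H\"older for all $\gamma < \alpha/2$.

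The main obstacle I anticipate is \emph{not} the moment bound itself but getting the small-$\lambda$ behaviour of $\esp'[(N'(\lambda))^{2\beta}]$ and $\esp'[(\text{Poisson increment})^{2\beta}]$ sharp enough near $r=0$ — one must exploit $a_0=0$ (equivalently, that the increment vanishes when no new balls arrive) to get an extra factor of $\lambda$ rather than just $\lambda^{2\beta}$, since $2\beta$ alone need not beat the $r^{-\alpha-1}$ singularity at the origin when $\beta$ is small but $\alpha$ is close to $1$. Concretely, $\esp'[(N'(\lambda))^{2\beta}] = \sum_{k\ge 1} k^{2\beta}\pi_k(\lambda) \le \P'(N'(\lambda)\ge 1)\cdot \esp'[(N'(\lambda))^{2\beta}\mid N'(\lambda)\ge 1]$, and bounding the conditional expectation by a constant times $1 \vee \lambda^{2\beta}$ while $\P'(N'(\lambda)\ge 1) \le \lambda$ gives the required $\lambda \wedge \lambda^{2\beta}$-type control. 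A secondary nuisance is handling the cross/mean terms $\esp'a_{N'(rt)}$ in the variance so that they do not spoil the bound — but since subtracting the mean only decreases the variance, one can simply drop them for the upper bound, so this is harmless. Once these elementary Poisson estimates are pinned down, both parts follow from standard Gaussian-process arguments.
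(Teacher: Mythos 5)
Your proposal is correct and follows essentially the same route as the paper: use the $L^2$-isometry of the Gaussian integral for the covariance, bound $\esp|\calZ_\alpha(t)-\calZ_\alpha(s)|^2 \le \alpha\int_0^\infty \esp'[(a_{N'(rt)}-a_{N'(rs)})^2]\,r^{-\alpha-1}\d r \le C\alpha\int_0^\infty\esp'[(N'(r(t-s)))^{2\beta}]\,r^{-\alpha-1}\d r$ via \eqref{eq:a_j increment}, obtain $C(t-s)^\alpha$, and then invoke Gaussian hypercontractivity and Kolmogorov's criterion. The only cosmetic difference is that the paper evaluates the last integral in closed form by expanding $\esp'[(N'(r(t-s)))^{2\beta}]$ into its Poisson series and integrating term by term to obtain $\alpha(t-s)^\alpha\sum_{k\ge1}k^{2\beta}\Gamma(k-\alpha)/\Gamma(k+1)$ (finite precisely when $2\beta<\alpha$), whereas you split the $r$-integral at $1/(t-s)$ and use the two bounds $\esp'[(N'(\lambda))^{2\beta}]\lesssim\lambda$ for small $\lambda$ (exploiting $a_0=0$) and $\lesssim\lambda^{2\beta}$ for large $\lambda$ — note the ``$\vee$'' in your bound should be ``$\wedge$'', but your subsequent regime-by-regime analysis is consistent with the correct minimum. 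Both arguments rely on exactly the same two facts ($\alpha<1$ for convergence at the origin, $2\beta<\alpha$ for convergence at infinity), so the proofs are interchangeable.
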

Note that the assumption \eqref{eq:a_j increment} includes the case that $(a_j)_{j\in\N}$ is a sub-additive sequence with polynomial growing rate at most $\beta$. 
 \begin{proof}
We compute directly the covariance, and will see at the end that it is a finite value under the assumption \eqref{eq:a_j}. 
By property of stochastic integrals,
\begin{align}
\cov(\calZ_\alpha(s),\calZ_\alpha(t)) & = \int_0^\infty\esp'\pp{\pp{a_{N'(rs)} - \esp' a_{N'(rs)}}\pp{a_{N'(rt)}-\esp'a_{N'(rt)}}}\alpha r^{-\alpha-1}\d r\nonumber\\
& = \int_0^\infty\esp'\pp{a_{N'(rs)}a_{N'(rt)}}\alpha r^{-\alpha-1}\d r - \int_0^\infty\esp'a_{N'(rs)}\esp'a_{N'(rt)}\alpha  r^{-\alpha-1}\d r.\label{eq:2 integrals}
\end{align}
Here and below, $\esp'$ is the expectation with respect to $\proba'$. The notation $'$ is solely to indicate that they have nothing to do with the probability space where the random measure $\calM_\alpha$ is defined and for convenience from now on, when working with $\esp', \proba'$ and $N'$ without the involvement of $\calM_\alpha$ we may simply write $\esp,\proba$ and $N$ accordingly. 
Recall that $N'(rs)\le N'(rt)$ a.s. since $s<t$. The first integral above equals
\begin{align*}
\int_0^\infty&\sif j1\sif {j'}j a_ja_{j'}\frac{(rs)^j}{j!}e^{-rs} \frac{(r(t-s))^{j'-j}}{(j'-j)!}e^{-r(t-s)}\alpha r^{-\alpha-1}\d r\\
& = \alpha  \sif k1 \summ j1k\binom kj a_ja_ks^j(t-s)^{k-j}\frac 1{k!}\int_0^\infty e^{-rt}r^{k-\alpha-1}\d r\\
& = \alpha t^\alpha \sif k1 \summ j1k\binom kj a_ja_k\pp{\frac st}^j\pp{1-\frac st}^{k-j}\frac{\Gamma(k-\alpha)}{k!} = \alpha t^\alpha \sif k1 F_{\vec a,s,t}\topp1 (k)\frac{\Gamma(k-\alpha)}{k!}.
\end{align*}
Note that in the last step, we used the fact that $|F_{\vec a,s,t}\topp1(k)|\le Ck^{2\beta}$ under the assumption \eqref{eq:a_j}; it then follows that the last series is absolutely convergent (recall that $\Gamma(k-\alpha)/k!\sim k^{-1-\alpha}$ as $k\to\infty$), and hence the interchange of integral and summations in previous steps is also justified by Fubini's theorem.

Similarly, the second integral in \eqref{eq:2 integrals} equals
\begin{align*}
\int_0^\infty&\sif j1\sif {j'}1 a_ja_{j'}\frac{(rs)^j(rt)^{j'}}{j!j'!}e^{-r(s+t)}\alpha r^{-\alpha-1}\d r\\
& = \alpha  \sif k2 \summ j1{k-1}\binom kj a_ja_{k-j}s^jt^{k-j}\frac 1{k!}\int_0^\infty e^{-r(s+t)}r^{k-\alpha-1}\d r\\
& = \alpha (s+t)^\alpha \sif k2 \summ j1{k-1}\binom kj a_ja_{k-j}\pp{\frac s{s+t}}^j\pp{\frac t{s+t}}^{k-j}\frac{\Gamma(k-\alpha)}{k!}\\
& = \alpha (s+t)^\alpha \sif k2 F_{\vec a,s,t}\topp2 (k)\frac{\Gamma(k-\alpha)}{k!},
\end{align*}
and we used the fact that $|F_{\vec a,s,t}\topp2(k)|\le Ck^{2\beta}$ under the assumption \eqref{eq:a_j}. We have thus proved \eqref{eq:cov Z}.

Now we examine the path continuity. By a similar calculation leading to \eqref{eq:2 integrals}, we have
\begin{align*}
\esp \pp{\calZ_\alpha(t)-\calZ_\alpha(s)}^2& = \int_0^\infty \esp\pp{a_{N(rt)}-a_{N(rs)}}^2 \alpha r^{-\alpha-1}\d r\\
& \quad - \int_0^\infty \pp{\esp a_{N(rt)} - \esp a_{N(rs)}}^2\alpha r^{-\alpha-1}\d r\\
& \le \int_0^\infty \esp\pp{a_{N(rt)}-a_{N(rs)}}^2 \alpha r^{-\alpha-1}\d r.
\end{align*}
By \eqref{eq:a_j increment}, it follows that, for all $0<s<t$,
\begin{align*}
\esp \pp{\calZ_\alpha(t)-\calZ_\alpha(s)}^2 &  \le C \int_0^\infty \esp (N(r(t-s)))^{2\beta} \alpha r^{-\alpha-1}\d r\\
& = C\int_0^\infty \sif k1 k^{2\beta}\frac{(r(t-s))^k}{k!}e^{-r(t-s)}\alpha r^{-\alpha-1}\d r\\
& =  C\alpha (t-s)^\alpha\sif k1 \frac{k^{2\beta}\Gamma(k-\alpha)}{\Gamma(k+1)}
 =
 C(t-s)^\alpha.
\end{align*}
Also, by well-known relation of moments for Gaussian distributions, we have
\[
\esp \pp{\calZ_\alpha(t)-\calZ_\alpha(s)}^{2m} \le C_m \pp{\esp \pp{\calZ_\alpha(t)-\calZ_\alpha(s)}^2}^m \le C(t-s)^{\alpha m}. 
\]
Then for $m>1/\alpha$, Kolmogorov's continuity theorem entails that the process has a continuous version with $\gamma$-H\"older-continuous sample path with $\gamma<(\alpha m-1)/(2m)$. Letting $m\to\infty$, we complete the proof for the second part. 
\end{proof}
\begin{remark}\label{rem:explicit}From stochastic-integral representations, we obtained an explicit formula of the covariance function \eqref{eq:cov Z}. However, most of the time the infinite series in the formulae cannot be simplified and are not directly useful. We mention two noteworthy exceptions, both from \citep[Theorem 2.3]{durieu16infinite}.
\begin{enumerate}[(a)]
\item With $a_j = 1, j\in\N$, $\calZ_\alpha$ is the limit of occupancy process. Then, 
\equh\label{eq:a_j=1}
\cov(\calZ_\alpha(s),\calZ_\alpha(t)) = \Gamma(1-\alpha)\pp{(s+t)^\alpha - \max\{s,t\}^\alpha}.
\eque
\item With $a_j = \inddd{j\rm~is~odd}$, $\calZ_\alpha$ is the limit of odd-occupancy process. Then,
\[
\cov(\calZ_\alpha(s),\calZ_\alpha(t)) = \Gamma(1-\alpha)2^{\alpha-2}\pp{(s+t)^\alpha - |t-s|^\alpha}.
\]
In this case, $(\calZ_\alpha(t))_{t\ge 0}$ is known as the bi-fractional Brownian motion with parameters $(1/2, \alpha)$. See \citep[Remark 2.4]{durieu16infinite} for more discussions. 
\end{enumerate}
We provide a calculation for \eqref{eq:a_j=1} using \eqref{eq:cov Z}. Assume $s<t$. First, 
\begin{align*}
F_{\vec a, s,t}\topp1(k) & = \summ j1k \binom kj \pp{\frac st }^j\pp{1-\frac st}^{k-j} = 1-\pp{1-\frac st}^k,\\
F_{\vec a, s,t}\topp2(k) & = \summ j1{k-1} \binom kj \pp{\frac s{s+t} }^j\pp{\frac t{s+t}}^{k-j} = 1-\pp{\frac t{s+t}}^k - \pp{\frac s{s+t}}^k.
\end{align*}
A key identity in calculations involving limit processes of Karlin model is the following. Let $Q_\alpha$ be an $\alpha$-Sibuya random variable \citep{pitman06combinatorial}. That is, it is an $\N$-valued random variable with 
\[
\proba(Q_\alpha = k) = \frac{\alpha}{\Gamma(1-\alpha)}\frac{\Gamma(k-\alpha)}{\Gamma(k+1)}, k\in\N,
\]
and in particular it satisfies $z^\alpha = 1-\esp 
 (1-z)^{Q_\alpha}$ for all $z\in [0,1)$ (this is essentially Newton's generalized binomial theorem). Then, we have 
\[
\alpha t^\alpha \sif k1\frac{F_{\vec a,s,t}\topp 1(k)\Gamma(k-\alpha)}{\Gamma(k+1)} = \Gamma(1-\alpha)t^\alpha \esp\pp{1-\pp{1-\frac st}^{Q_\alpha}} = \Gamma(1-\alpha) s^\alpha,
\] 
and 
\begin{align*}
\alpha (s+t)^\alpha \sif k2\frac{F_{\vec a,s,t}\topp 2(k)\Gamma(k-\alpha)}{\Gamma(k+1)}& = \Gamma(1-\alpha)(s+t)^\alpha \esp\pp{1-\pp{\frac t{s+t}}^{Q_\alpha} - \pp{\frac s{s+t}}^{Q_\alpha}} \\
&= \Gamma(1-\alpha) \pp{s^\alpha+t^\alpha-(s+t)^\alpha}.
\end{align*}
Combining the above we have derived the stated \eqref{eq:a_j=1}. 

\end{remark}
\section{Limit theorem for the Poissonized model} \label{sec:Karlin}
In order to prove Theorem \ref{thm:2}, we proceed by a standard Poissonization approach, as already carried out in \citep{durieu16infinite,durieu20infinite,chebunin22functional}. This section is devoted to the functional central limit theorem for the Poissonized model of the Karlin model, which we first introduce. 

For each $\ell\in\N$, let $N_\ell = (N_\ell(t))_{t\ge 0}$ be a Poisson process with parameter $p_\ell$, and assume that $(N_\ell)_{\ell\in\N}$ are independent. Set
\[
\wt D_{t,j} := \sif \ell1 \inddd{N_\ell(t) = j}, t\ge 0, j\in\N,
\]
and  define
\begin{equation}\label{eq:wt W_n}
    \widetilde{W}_n(t):=\sum_{j=1}^\infty a_j\pp{ \widetilde{D}_{nt,j}-\E \widetilde{D}_{nt,j} }=\sum_{\ell=1}^\infty\left( a_{N_\ell(nt)}-\E a_{N_\ell(nt)}\right).
\end{equation}
We shall prove the following theorem, which is the Poissonized version of Theorem \ref{thm:2}. 
Recall that we assume $p_j\sim \mathsf C_0j^{-1/\alpha}$ as $j\to\infty$, which we do not repeat in the sequel, and we shall denote $\sigma_n^2:= v(n)\sim \mathsf C_0^{\alpha}n^{\alpha}$. See the discussions around \eqref{eq:p RV}. 
\begin{theorem}\label{thm:3}
Assume $(a_j)_{j\in\N}$ with $a_0 = 0$ satisfies $|a_{i+j}-a_i|\le Cj^\beta$ for all $i,j\in\N$ and some $\beta\in[0,\alpha^2/2)$.     Then,
\[
\pp{ \frac{\widetilde{W}_n(t)}{\sigma_n}}_{t\in[0,1]}\weakto \pp{\calZ_\alpha(t)}_{t\in[0,1]}, \mmas n\rightarrow \infty,
\]
    in $D([0,1])$, where the process $\calZ_\alpha$ is as in Proposition \ref{prop:Z}.
\end{theorem}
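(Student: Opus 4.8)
The plan is to prove Theorem~\ref{thm:3} in two parts following the classical scheme for functional limit theorems of occupancy-type statistics: first establish convergence of the finite-dimensional distributions of $(\wt W_n(t)/\sigma_n)_{t\in[0,1]}$ to those of $(\calZ_\alpha(t))_{t\in[0,1]}$, and then establish tightness of the sequence in $D([0,1])$ with the $J_1$ topology. Combining the two gives weak convergence in $D([0,1])$ by the standard criterion (see \citep{billingsley99convergence}). The finite-dimensional convergence is the ``soft'' part and is handled by Proposition~\ref{prop:fdd} (the excerpt tells us this is proved under the weaker hypothesis $|a_j|\le Cj^\beta$ with $\beta<\alpha/2$ alone); the mechanism is that $\wt W_n(t) = \sum_\ell (a_{N_\ell(nt)}-\E a_{N_\ell(nt)})$ is, for each fixed $n$, a sum of independent centered random variables indexed by $\ell$, so for a fixed vector $(t_1,\dots,t_d)$ one checks a Lindeberg-type CLT for triangular arrays: compute the limiting covariance by exploiting $p_\ell\sim \mathsf C_0\ell^{-1/\alpha}$ and the regular-variation bookkeeping $v(n)\sim \mathsf C_0^\alpha n^\alpha$, turning the sum over $\ell$ into an integral against $\alpha r^{-\alpha-1}\,\d r$ (the control measure of $\calM_\alpha$), and verify that the covariance so obtained matches \eqref{eq:cov Z}; the negligibility of individual summands and the Lindeberg condition follow because each $a_{N_\ell(nt)}-\E a_{N_\ell(nt)}$ is small relative to $\sigma_n\to\infty$ once $\ell$ is large, using $|a_j|\le Cj^\beta$ and Poisson moment bounds.

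The substantive work, and the main obstacle, is tightness, which is exactly why the stronger hypothesis $\beta<\alpha^2/2$ is imposed here. The natural route is the moment criterion of \citep{billingsley99convergence}: exhibit $\gamma>0$ and $p>1$ with
\[
\E\abs{\frac{\wt W_n(t)-\wt W_n(s)}{\sigma_n}}^{p}\le C\,|t-s|^{1+\gamma}
\]
uniformly in $n$ and in $0\le s\le t\le 1$ (at least for $t-s$ not too small relative to $1/n$, with the near-diagonal increments handled separately since $\wt W_n$ has jumps of controlled size). The difficulty is that when $\beta>0$ the weights $a_j$ are unbounded, so unlike the bounded case in \citep{durieu16infinite} one cannot get away with a crude second-moment bound; higher moments of $\wt W_n(t)-\wt W_n(s) = \sum_\ell\bigl(a_{N_\ell(nt)}-a_{N_\ell(ns)}-\E(\cdots)\bigr)$ must be estimated, and since the summands over $\ell$ are independent and centered, a Rosenthal-type inequality reduces the $2m$-th moment to a combination of $\bigl(\sum_\ell \Var(\cdots)\bigr)^m$ and $\sum_\ell \E|\cdots|^{2m}$. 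For each $\ell$ the increment $a_{N_\ell(nt)}-a_{N_\ell(ns)}$ is controlled by $C\,(N_\ell(nt)-N_\ell(ns))^\beta$ via \eqref{eq:a increment}, and $N_\ell(nt)-N_\ell(ns)$ is Poisson with mean $p_\ell n(t-s)$; one then sums the resulting moments over $\ell$ against the frequencies $p_\ell\sim\mathsf C_0\ell^{-1/\alpha}$. Carrying out this bookkeeping, one finds that the variance term contributes order $(n(t-s))^\alpha$ (after dividing by $\sigma_n^{2m}\asymp n^{\alpha m}$ this gives $(t-s)^{\alpha m}$, i.e. exponent $\alpha m\to\infty$, which is fine), but the ``large deviation'' term $\sum_\ell\E|\cdots|^{2m}$ is where the exponent $2\beta$ against the tail $\ell^{-1/\alpha}$ interacts badly, and it is precisely the requirement that this term also be $o\bigl((t-s)^{1+\gamma}\bigr)$ after normalization that forces $\beta<\alpha^2/2$; the constraint comes from balancing the polynomial growth $2\beta$ of the $2m$-th moment of a Poisson($p_\ell n(t-s)$) variable against the summability threshold $1/\alpha$ and the normalization $n^{\alpha m}$, optimized over $m$.

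Concretely, I would organize the tightness proof as follows: (i) fix an even integer $2m$ large, and split $\wt W_n(t)-\wt W_n(s)=\sum_\ell X_\ell$ with $X_\ell$ centered and independent; (ii) apply Rosenthal's inequality to bound $\E|\sum_\ell X_\ell|^{2m}$ by $C_m\bigl[(\sum_\ell\E X_\ell^2)^m+\sum_\ell\E X_\ell^{2m}\bigr]$; (iii) use $|X_\ell|\le C(N_\ell(nt)-N_\ell(ns))^\beta + C\,\E(N_\ell(nt)-N_\ell(ns))^\beta$ and the Poisson moment estimate $\E(\mathrm{Poisson}(\lambda))^q\le C_q(\lambda+\lambda^q)$ to get $\E X_\ell^{2}\le C(p_\ell n(t-s))^{?}$ and $\E X_\ell^{2m}\le C\bigl((p_\ell n(t-s))^{\min}+(p_\ell n(t-s))^{2m\beta}\bigr)$ with the relevant exponents; (iv) sum over $\ell$ using $\#\{\ell:p_\ell>u\}=v(1/u)\asymp u^{-\alpha}$, i.e. replacing $\sum_\ell g(p_\ell)$ by $\int g(u)\,\d v(1/u)$, to obtain bounds of the form $C\bigl[(n(t-s))^\alpha\bigr]^m+C(n(t-s))^{\alpha}$-type terms plus the critical $(n(t-s))^{2m\beta}$-type term (valid in the regime $n(t-s)\gtrsim 1$); (v) divide by $\sigma_n^{2m}\asymp n^{\alpha m}$ and check that, for $m$ chosen large depending on $\alpha,\beta$, all terms are $\le C(t-s)^{1+\gamma}$ for some $\gamma>0$ whenever $\beta<\alpha^2/2$; (vi) treat the small-increment regime $t-s<1/n$ directly, using that on such intervals $\wt W_n$ changes by at most finitely many jumps whose sizes are controlled, or equivalently invoke a version of Billingsley's criterion tailored to $D([0,1])$ that only requires the moment bound for $t-s\ge 1/n$ together with control of individual jumps. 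The bookkeeping in steps (iii)--(v) — in particular tracking exactly which power of $p_\ell n(t-s)$ dominates in each moment and getting the summation-over-$\ell$ exponents right — is the technical heart of the argument and the place where the hypothesis $\beta<\alpha^2/2$ is consumed; everything else is routine. Once tightness is in hand, Theorem~\ref{thm:3} follows by combining it with Proposition~\ref{prop:fdd}.
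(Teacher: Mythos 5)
Your overall architecture matches the paper's: finite-dimensional convergence (Proposition~\ref{prop:fdd}, Lindeberg/Lyapunov CLT for the triangular array over $\ell$) plus tightness via Rosenthal's inequality on $\wt W_n(t)-\wt W_n(s)=\sum_\ell X_\ell$ with $|X_\ell|\lesssim (N_\ell(nt)-N_\ell(ns))^\beta$, and you have correctly located where $\beta<\alpha^2/2$ is consumed: you need an even power $2m$ with both $\alpha m>1$ (for the Rosenthal square-function term to integrate) and $2m\beta<\alpha$ (for the $\sum_\ell\E|X_\ell|^{2m}$ term to be summable against $p_\ell\sim\ell^{-1/\alpha}$), and the interval $(1/\alpha,\alpha/(2\beta))$ is nonempty precisely when $\beta<\alpha^2/2$. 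This is exactly Proposition~\ref{prop:tight1}.

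The genuine gap is in your step (vi). After dividing the Rosenthal bound by $\sigma_n^{2m}$, the $\sum_\ell\E|X_\ell|^{2m}$ term yields something of order $\sigma_n^{-2(m-1)}|t-s|^\alpha$, which is \emph{not} of the form $C|t-s|^{1+\gamma}$ uniformly in $n$; it only becomes $\lesssim |t-s|^{\alpha m}$ once you restrict to $|t-s|\gtrsim 1/n$ (or in fact $|t-s|\gtrsim\sigma_n/n$). You acknowledge this and propose to ``invoke a version of Billingsley's criterion tailored to $D([0,1])$ that only requires the moment bound for $t-s\ge 1/n$ together with control of individual jumps.'' No such criterion exists off the shelf for a process like $\wt W_n$ whose jump times are random Poisson arrivals rather than deterministic grid points $k/n$ (the standard shortcut for partial-sum processes in $D$ does not apply). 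What is actually needed — and what the paper proves as Lemma~\ref{lem:Olivier}, a modified dyadic chaining argument adapted from \citep[Lemma~3.6]{durieu16infinite} — is a combination of the moment estimate~\eqref{eq:increment moment} at scales down to $\delta 2^{-k_n}\asymp\sigma_n/n$ together with the crucial pathwise estimate
\[
|\wt W_n(t)-\wt W_n(s)|\le C\bigl(N(nt)-N(ns)+n(t-s)\bigr),\qquad 0\le s\le t\le 1,
\]
which follows from $|a_{i+j}-a_i|\le Cj^\beta$ via $m^\beta\le m$ for $m\in\N_0$ (this is~\eqref{eq:lem 3.5}). You never identify this pathwise bound, yet it is the ingredient that controls the sub-$1/n$ scales in the chaining; without it, your step (vi) has no substance. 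A secondary difference (more a technical choice than a gap): the paper first passes to a truncated process $\what W_n$ with the cutoff $\ind_{N_\ell(n)\le n/\log n}$, proves the moment and pathwise bounds for $\what W_n$, and separately shows $\sup_t|\wt W_n(t)-\what W_n(t)|/\sigma_n\to 0$ in probability (Proposition~\ref{prop:tight2}); the truncation keeps the Potter-bound manipulations in Lemma~\ref{lem:moments what a} clean and gives the moment estimate uniformly in $t\in[0,1]$. Your un-truncated bookkeeping is likely salvageable in the regime $n(t-s)\ge 1$, but you should be aware that the paper's sharper uniform-in-$t$ bound $C\sigma_n^2|t-s|^\alpha$ (as opposed to $C\sigma_n^2|t-s|^{\beta p}$, the naive bound) relies on this truncation, and the chaining argument does use the bound at small $|t-s|$.
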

To establish Theorem \ref{thm:3}, we first prove the convergence of finite-dimensional distributions in Section \ref{sec:fdd Poisson}. Note that in this step we assume only the weaker condition $|a_j|\le Cj^\beta$ for $\beta\in[0,\alpha/2)$. Next, in Section \ref{sec:tightness} we establish the tightness, and it is here that we impose the stronger condition $|a_{i+j}-a_i|\le Cj^\beta$ with $\beta\in[0,\alpha^2/2)$. 
 \subsection{Finite-dimensional convergence for the Poissonized model}\label{sec:fdd Poisson}
In this section, we prove the following.
\begin{proposition}\label{prop:fdd}
    Suppose that  $(a_j)_{j\in\N}$ satisfies $a_j\le Cj^\beta$ for some $\beta\in[0,\alpha/2)$. Then,
\[
        \pp{\frac{\widetilde{W}_n(t)}{\sigma_n}}_{t\in [0,1]}\xrightarrow[]{f.d.d} \pp{\calZ_\alpha(t)}_{t\in[0,1]}, \text{ as } n\rightarrow \infty.
\]
\end{proposition}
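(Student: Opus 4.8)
The plan is to prove convergence of finite-dimensional distributions via a Lindeberg/Lyapunov central limit theorem applied to the array of independent summands $\xi_{n,\ell}:=a_{N_\ell(n\cdot)}-\esp a_{N_\ell(n\cdot)}$, after reducing the joint convergence to one-dimensional convergence by the Cramér–Wold device. Fix times $0\le t_1<\dots<t_d\le 1$ and reals $\lambda_1,\dots,\lambda_d$; set $g(\ell):=\summ i1d\lambda_i\xi_{n,\ell}(t_i)/\sigma_n$. Since the processes $(N_\ell)_{\ell\in\N}$ are independent, $\summ{}{}_\ell g(\ell)$ is a sum of independent mean-zero random variables, and it suffices to show (i) the variance converges to $\var(\summ i1d\lambda_i\calZ_\alpha(t_i))$, which by bilinearity reduces to showing $\sigma_n^{-2}\cov(\wt W_n(s),\wt W_n(t))\to\cov(\calZ_\alpha(s),\calZ_\alpha(t))$ for each pair $s,t$, and (ii) a Lyapunov-type negligibility condition for, say, third or fourth absolute moments.

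For step (i) I would compute $\cov(\wt W_n(s),\wt W_n(t))=\sif\ell1 \cov(a_{N_\ell(ns)},a_{N_\ell(nt)}) - \sif\ell1\cov$-type correction, exactly paralleling the computation in the proof of Proposition~\ref{prop:Z}. Writing each term as an expectation over the Poisson law with parameter $p_\ell n t$ and using that $v(x)=\sif\ell1\delta_{1/p_\ell}([0,x])$ is regularly varying with index $\alpha$, the sum $\sif\ell1 h(np_\ell t)$ becomes, after the change of variables $r=np_\ell$, a Riemann-sum approximation to $\int_0^\infty h(rt)\,\d v^{\leftarrow}$, and under \eqref{eq:p RV} this is $\sim \sigma_n^2\int_0^\infty h(rt)\,\alpha r^{-\alpha-1}\d r$ provided $h(u)=O(u^{2\beta})$ near $0$ and decays suitably at infinity — the bound $|a_j|\le Cj^\beta$ with $2\beta<\alpha$ is exactly what makes $u\mapsto \esp(a_{N(u)})^2$ integrable against $\alpha r^{-\alpha-1}\d r$ at the origin (it behaves like $u^{2\beta}$), so the limiting integral is finite and equals the covariance of $\calZ_\alpha$ in \eqref{eq:cov Z}. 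The standard device here (cf.\ \citep{karlin67central, durieu16infinite}) is to split $\sif\ell1$ at $p_\ell\asymp \delta/n$ and at $p_\ell\asymp T/n$, controlling the small-$r$ part by the $u^{2\beta}$ bound and the large-$r$ part by $\esp a_{N(u)}^2 \le C u^{2\beta}$ together with $v$ regularly varying.

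Step (ii), the negligibility, is where I expect the main work, because with unbounded $a_j$ there is no longer a trivial $O(1)$ moment bound per summand. I would estimate $\sif\ell1 \esp|\xi_{n,\ell}(t)|^3$ (or the fourth moment) by $C\sif\ell1 \esp|a_{N_\ell(nt)}|^3 \le C\sif\ell1 \esp (N_\ell(nt))^{3\beta}$; the individual term is $O((np_\ell t)^{3\beta}\wedge 1)$ up to the Poisson mass $\proba(N_\ell(nt)\ge 1)\asymp np_\ell\wedge 1$, giving a term of order $(np_\ell)^{3\beta+1}\wedge 1$. Summing via the regular variation of $v$ as above yields a bound of order $\sigma_n^{\,2} n^{-\alpha/2}\cdot(\text{something})$; the point is that $\sif\ell1[(np_\ell)^{3\beta+1}\wedge 1]$ is $o(\sigma_n^3)=o(n^{3\alpha/2})$ precisely when $3\beta+1$ is small enough relative to $\alpha$ — and $\beta<\alpha/2$ comfortably suffices here (indeed this is softer than the tightness constraint $\beta<\alpha^2/2$). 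Once the Lyapunov ratio $\sigma_n^{-3}\sif\ell1\esp|\xi_{n,\ell}(t)|^3\to0$ is verified, Lyapunov's CLT gives one-dimensional convergence of $\wt W_n(t)/\sigma_n$, the Cramér–Wold combination is handled identically since $g(\ell)$ is again a sum of Poisson-driven terms with the same moment control, and matching the limiting Gaussian law to $\calZ_\alpha$ follows from the covariance computation in step (i). The only subtlety to flag is justifying the interchange of limit and infinite sum in (i): this is handled exactly as in Proposition~\ref{prop:Z}, by the absolute convergence of the relevant $\Gamma$-series under $2\beta<\alpha$.
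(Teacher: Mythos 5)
Your overall strategy — Cramér--Wold plus a Lyapunov-type CLT for an infinite array of independent summands, with the covariance identified by a regular-variation/Riemann-sum argument — is the same as the paper's, and it works. There are two presentational/technical differences worth flagging, and one estimate that is wrong as written but happens not to affect the conclusion.

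First, the paper does the ``split at $p_\ell \asymp T/n$'' that you mention informally, but at the level of the \emph{process} rather than only inside the covariance computation: it decomposes $\widetilde W_n = \widetilde W_n^{(\epsilon)} + \widetilde T_n^{(\epsilon)}$ by cutting the sum at $\ell_n = \lfloor \epsilon v(n)\rfloor$, proves $\widetilde W_n^{(\epsilon)}/\sigma_n \Rightarrow \mathcal Z_\alpha^{(\epsilon)}$ and a uniform-in-$n$ variance bound for $\widetilde T_n^{(\epsilon)}$, and then lets $\epsilon\downarrow 0$ via \citep[Theorem~3.1]{billingsley99convergence}. The payoff is that the domain of the integral appearing in the covariance becomes (eventually) compact, so dominated convergence is painless. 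Your one-shot version needs the dominated convergence step on an unbounded domain, which requires a separate tail estimate for large $r$; you allude to this (``controlling \dots the large-$r$ part''), but this is precisely the point the paper's truncation is engineered to avoid, and it should be spelled out rather than waved at.

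Second, your Lyapunov step uses third (or fourth) absolute moments, whereas the paper uses $(2+\delta)$-th moments with $\delta$ small, so that $(2+\delta)\beta < \alpha$ can always be arranged under $\beta<\alpha/2$; then Lemma~\ref{TechEst} gives directly $\sum_\ell\esp|a_{N_\ell(nt)}|^{2+\delta}\le C\sigma_n^2$ and the ratio is $\sigma_n^{-\delta}\to 0$. Your third-moment route also works for the full range $\beta<\alpha/2$, but not for the reason your bound suggests: the estimate ``$\esp N_\ell(nt)^{3\beta}\le C\big((np_\ell)^{3\beta+1}\wedge 1\big)$'' is false for $np_\ell\gg 1$, where $\esp N^{3\beta}\asymp (np_\ell)^{3\beta}$ can be much larger than $1$ (already the $\ell=1$ term contributes $\asymp n^{3\beta}$). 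The correct order of $\sum_\ell\esp N_\ell(nt)^{3\beta}$ is $n^{\max(\alpha,3\beta)}$ (up to logs), and it is $o(\sigma_n^3)=o(n^{3\alpha/2})$ exactly because $\max(\alpha,3\beta)<3\alpha/2 \iff \beta<\alpha/2$. So the conclusion stands, but the bound you wrote is an underestimate that would mislead in a less forgiving setting; working with $(2+\delta)$-moments for small $\delta$, as the paper does, sidesteps the case analysis entirely.
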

We proceed by the following approximation. 
Throughout this subsection without loss of generality assume that $(p_\ell)_{\ell\in\N}$ is non-increasing. For every fixed $\epsilon>0$, write
\begin{equation*}
\widetilde{W}_n(t)=\widetilde{W}^{(\epsilon)}_n(t)+\widetilde{T}^{(\epsilon)}_{n}(t), \quad t\in[0,1], n\in\N,
\end{equation*}
where
\begin{equation*}
    \widetilde{W}^{(\epsilon)}_n(t):= \sum_{\ell=\ell_n+1}^{\infty} \pp{{a}_{N_\ell(nt)} - \esp a_{N_\ell(nt)}}\qmand
    \widetilde{T}^{(\epsilon)}_{n}(t):=\sum_{\ell=1}^{\ell_n} \pp{{a}_{N_\ell(nt)} - \esp a_{N_\ell(nt)}},
\end{equation*}
with
\[
\ell_n\equiv \ell_{n,\epsilon}:=\floor{\epsilon v(n)} = \floor{\epsilon \sigma_n^2}.
\] 
Introduce also
\[
\calZ_\alpha\topp\epsilon(t):    =\int_{(0,\epsilon^{-1/\alpha}]\times \Omega'}\pp{a_{N'(rt)}-\esp' a_{N'(rt)} }\calM_\alpha(\d r,\d \omega'), \quad j\in \N, t\in [0,1].
\]
Clearly, for every $t\ge 0$ fixed, $\calZ_\alpha\topp\epsilon(t)\to \calZ_\alpha(t)$ in probability as $\epsilon\downarrow 0$.

Then, by the standard triangular array approximation argument \citep[Theorem 3.1]{billingsley99convergence}, to prove Proposition \ref{prop:fdd} it suffices to establish Lemmas \ref{CLTepsilon} and \ref{remainderLemma} below. 
\begin{lemma}\label{CLTepsilon} For every $t>0$, under the assumptions of Proposition  \ref{prop:fdd} we have
\[
\pp{\frac{\widetilde{W}_n^{(\epsilon)}(t)}{\sigma_n}}_{t\in[0,1]}\fddto\pp{\mathcal{Z}_\alpha^{(\epsilon)}(t)}_{t\in[0,1]},\quad \text{as } n\rightarrow\infty.
\]
\end{lemma}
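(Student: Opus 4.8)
The plan is to establish the finite-dimensional convergence of $(\wt W_n^{(\epsilon)}(t)/\sigma_n)$ to $(\calZ_\alpha^{(\epsilon)}(t))$ by the Cram\'er--Wold device together with the Lindeberg--Feller central limit theorem for triangular arrays. Fix time points $0<t_1<\cdots<t_d\le 1$ and real coefficients $c_1,\dots,c_d$; the target is
\[
\frac1{\sigma_n}\summ k1d c_k\wt W_n^{(\epsilon)}(t_k) = \sif \ell{\ell_n}\frac1{\sigma_n}X_{n,\ell},\qquad X_{n,\ell}:=\summ k1d c_k\pp{a_{N_\ell(nt_k)}-\esp a_{N_\ell(nt_k)}},
\]
where the summands $(X_{n,\ell})_{\ell\ge \ell_n}$ are independent and centered. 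So the whole thing reduces to a one-dimensional CLT for a triangular array.

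The key computation is the asymptotics of the variance. Writing $v_n^2:=\var\spp{\sif\ell{\ell_n}X_{n,\ell}} = \sif\ell{\ell_n}\var(X_{n,\ell})$, I would expand
\[
\frac{v_n^2}{\sigma_n^2} = \frac1{\sigma_n^2}\sif\ell{\ell_n}\summ {k,k'}1d c_kc_{k'}\cov\pp{a_{N_\ell(nt_k)},a_{N_\ell(nt_{k'})}}
\]
and convert the sum over $\ell$ into an integral against $v$: since $N_\ell(nt) = N_\ell'(np_\ell t)$ in law for a unit-rate Poisson process $N_\ell'$, each covariance is a function of $(np_\ell t_k, np_\ell t_{k'})$, and summing over $\ell\ge\ell_n$ with the substitution $r \leftrightarrow np_\ell$ (so that $np_\ell\le 1/\epsilon^{1/\alpha}\cdot\mathsf C_0^{?}$ matches the truncation cutoff $(0,\epsilon^{-1/\alpha}]$) together with the regular variation $v(n)\sim \mathsf C_0^\alpha n^\alpha$ and standard Karlin-type Abelian arguments (as in \citep{karlin67central,durieu16infinite}) yields
\[
\frac{v_n^2}{\sigma_n^2}\longrightarrow \int_{(0,\epsilon^{-1/\alpha}]}\esp'\pp{\summ k1d c_k\pp{a_{N'(rt_k)}-\esp'a_{N'(rt_k)}}}^2\alpha r^{-\alpha-1}\,\d r = \var\pp{\summ k1d c_k\calZ_\alpha^{(\epsilon)}(t_k)},
\]
using exactly the covariance identity behind \eqref{eq:2 integrals}. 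The hypothesis $|a_j|\le Cj^\beta$ with $\beta<\alpha/2$ is what makes the relevant $r$-integrals (involving $\esp'(N'(rt))^{2\beta}\lesssim (rt)^{2\beta}\vee$ const near $0$) converge, just as in the proof of Proposition \ref{prop:Z}.

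The remaining ingredient is the Lindeberg condition: for every $\eta>0$,
\[
\frac1{\sigma_n^2}\sif\ell{\ell_n}\esp\pp{X_{n,\ell}^2\indd{|X_{n,\ell}|>\eta\sigma_n}}\longrightarrow 0.
\]
Since $|X_{n,\ell}|\le 2(\summ k1d|c_k|)\max_k|a_{N_\ell(nt_k)}|$, and on the event that $N_\ell(nt_d)$ is bounded (which has overwhelming probability, as $np_\ell$ stays bounded for $\ell\ge\ell_n$), $|X_{n,\ell}|$ is $O(1)$ — while $\sigma_n\to\infty$ — I would split according to whether $N_\ell(nt_d)$ exceeds a slowly growing threshold and control the tail using Poisson tail bounds plus $|a_j|\le Cj^\beta$; a fourth-moment (or $(2+\delta)$-moment) bound of the form $\sif\ell{\ell_n}\esp|X_{n,\ell}|^{2+\delta} = o(\sigma_n^{2+\delta})$, obtained by the same integral-conversion as for the variance, would suffice and is probably cleanest. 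I expect this moment/Lindeberg estimate to be the main obstacle, precisely because when $\beta>0$ the $a_j$ are unbounded and one no longer has the trivial bound $|X_{n,\ell}|\le C$; one must trade off the Poisson tail of $N_\ell(nt_d)$ against the polynomial growth of $a_j$, and check that the contribution of the ``large $j$'' regime is negligible after normalization by $\sigma_n^{2+\delta}\asymp n^{(2+\delta)\alpha/2}$. Once both lemmas are in place, Lindeberg--Feller gives $\spp{\sif\ell{\ell_n}X_{n,\ell}}/v_n\Rightarrow \calN(0,1)$, and combining with the variance convergence and Cram\'er--Wold yields the claimed f.d.d. convergence.
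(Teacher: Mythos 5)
Your proposal follows essentially the same route as the paper: Cram\'er--Wold reduction to a one-dimensional CLT for a triangular array, convergence of covariances via the Karlin-type sum-to-integral conversion (which the paper packages as Lemma~\ref{TechEst} and applies with $b_k = F_{\vec a,s,t}^{(1)}(k)$ and $F_{\vec a,s,t}^{(2)}(k)$), and then a Lyapunov-type $(2+\delta)$-moment condition, where the key estimate is $\sif\ell{\ell_n}\esp|a_{N_\ell(nt)}|^{2+\delta}=O(\sigma_n^2)$ (not $O(\sigma_n^{2+\delta})$), obtained by again applying Lemma~\ref{TechEst} after choosing $\delta$ small enough that $\beta(2+\delta)<\alpha$. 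The paper invokes the Lyapunov version directly (\citep[Theorem 8.4.1]{borovkov13probability}) rather than verifying Lindeberg, exactly the cleaner option you identify.
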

\begin{lemma}\label{remainderLemma}
    Under the assumptions of Proposition  \ref{prop:fdd}, for every $t>0$,
    \[
    \lim_{\epsilon\downarrow 0}\limsupn \proba\pp{\abs{\wt T_n\topp\epsilon(t)}>\sigma_n\eta} = 0, \mfa \eta>0. 
       \]
\end{lemma}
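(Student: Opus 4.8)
The plan is to bound the variance of $\wt T_n\topp\epsilon(t)$ and show that, after dividing by $\sigma_n^2$, it tends to a quantity that vanishes as $\epsilon\downarrow 0$; Chebyshev's inequality then finishes the job. Since $\wt T_n\topp\epsilon(t) = \sum_{\ell=1}^{\ell_n-1}(a_{N_\ell(nt)} - \esp a_{N_\ell(nt)})$ is a sum of independent centered terms, its variance is $\sum_{\ell=1}^{\ell_n-1}\var(a_{N_\ell(nt)})$, where $N_\ell(nt)$ is Poisson with mean $ntp_\ell$. Using the crude bound $\var(a_{N_\ell(nt)})\le \esp(a_{N_\ell(nt)} - a_0)^2 = \esp a_{N_\ell(nt)}^2$, and then the increment bound $|a_j| = |a_j - a_0|\le Cj^\beta$ from \eqref{eq:a increment}, we get $\var(a_{N_\ell(nt)})\le C\,\esp N_\ell(nt)^{2\beta}$. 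For $2\beta\le 1$ (which is more than guaranteed, since $\beta<\alpha^2/2<1/2$) Jensen gives $\esp N_\ell(nt)^{2\beta}\le (\esp N_\ell(nt))^{2\beta} = (ntp_\ell)^{2\beta}$, so $\var\bigl(\wt T_n\topp\epsilon(t)\bigr)\le C (nt)^{2\beta}\sum_{\ell=1}^{\ell_n-1} p_\ell^{2\beta}$.

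Next I would estimate $\sum_{\ell=1}^{\ell_n-1} p_\ell^{2\beta}$. The indices run up to $\ell_n\asymp \epsilon\sigma_n^2\asymp \epsilon n^\alpha$, and since $p_\ell\sim \mathsf C_0\ell^{-1/\alpha}$ with $2\beta/\alpha < \alpha < 1$, the sum $\sum_{\ell=1}^{m} p_\ell^{2\beta}\asymp m^{1-2\beta/\alpha}$ as $m\to\infty$. Plugging $m = \ell_n\asymp \epsilon n^\alpha$ yields $\sum_{\ell=1}^{\ell_n-1}p_\ell^{2\beta}\le C(\epsilon n^\alpha)^{1-2\beta/\alpha}$ for $n$ large. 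Therefore
\[
\frac{\var\bigl(\wt T_n\topp\epsilon(t)\bigr)}{\sigma_n^2}\le \frac{C(nt)^{2\beta}(\epsilon n^\alpha)^{1-2\beta/\alpha}}{\mathsf C_0^\alpha n^\alpha} = C t^{2\beta}\,\epsilon^{1-2\beta/\alpha}\, n^{2\beta + \alpha - 2\beta - \alpha} = Ct^{2\beta}\epsilon^{1-2\beta/\alpha},
\]
so the powers of $n$ cancel exactly, leaving a bound that is uniform in $n$ and of order $\epsilon^{1-2\beta/\alpha}$. Since $2\beta/\alpha<\alpha<1$, the exponent $1-2\beta/\alpha$ is strictly positive, so this bound tends to $0$ as $\epsilon\downarrow 0$. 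By Chebyshev, $\proba(|\wt T_n\topp\epsilon(t)|>\sigma_n\eta)\le \var(\wt T_n\topp\epsilon(t))/(\sigma_n^2\eta^2)\le Ct^{2\beta}\epsilon^{1-2\beta/\alpha}/\eta^2$; taking $\limsup_n$ and then $\epsilon\downarrow 0$ gives the claim.

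I do not expect a serious obstacle here — this is a routine second-moment argument. The only minor points requiring care are: (i) making the asymptotic $\sum_{\ell\le m}p_\ell^{2\beta}\asymp m^{1-2\beta/\alpha}$ precise (this follows from $p_\ell\sim\mathsf C_0\ell^{-1/\alpha}$ by comparison with an integral, or one can invoke Karamata-type summation since $\ell\mapsto p_\ell^{2\beta}$ is regularly varying with index $-2\beta/\alpha>-1$); and (ii) handling the regime where $\ell_n$ is small or the asymptotics for $p_\ell$ have not yet kicked in, which is harmless since we only need an upper bound for all large $n$ and finitely many initial terms contribute an $O(1)$ amount that is absorbed. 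One could alternatively avoid Jensen and compute $\esp a_{N_\ell(nt)}^2$ against the full Poisson law, but the Jensen bound is cleaner and the exponent bookkeeping already closes with room to spare, which is consistent with the remark in the paper that only $\beta<\alpha/2$ (not $\beta<\alpha^2/2$) is needed at this step.
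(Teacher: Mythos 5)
Your proposal is correct and matches the paper's argument: the paper also bounds $\var(\wt T_n^{(\epsilon)}(t))\le\sum_{\ell<\ell_n}\esp a_{N_\ell(nt)}^2$ and then invokes the second part of Lemma~\ref{TechEst} (with $b_j=a_j^2$, $\alpha'=2\beta$), whose proof is precisely your chain ($|a_j|^2\le Cj^{2\beta}$, Jensen for $\esp N^{2\beta}\le(\esp N)^{2\beta}$, then summing $p_\ell^{2\beta}$ over $\ell<\ell_n\asymp\epsilon n^\alpha$ to get $C\sigma_n^2t^{2\beta}\epsilon^{1-2\beta/\alpha}$). One small nit: you write ``$2\beta/\alpha<\alpha<1$'' and ``$\beta<\alpha^2/2$'', but under the hypotheses of Proposition~\ref{prop:fdd} one only has $\beta<\alpha/2$, hence $2\beta/\alpha<1$; that weaker bound is all the argument needs, so nothing breaks.
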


Before proving the lemmas we need the following technical estimate which we shall use at several places. Let 
\[
\gamma(a,t) = \int_0^t x^{a-1}e^{-x}\d x, \quad a>0, t>0,
\]
denote the lower incomplete gamma function.

\begin{lemma}\label{TechEst} Let $(b_j)_{j\in\N_0}$ be a sequence such that $b_0 = 0$, $|b_j| \le Cj^{\alpha'}$ for some $\alpha'<\alpha$. Fix~$t>0$. Then, 
    \begin{equation}\label{eq:sum b_n upper}
        \sum_{\ell=\ell_n+1}^\infty\E b_{N_\ell(nt)}\sim \sigma_n^2  \alpha t^\alpha \sum_{j=1}^\infty \frac{b_j \gamma(j-\alpha,t\epsilon^{-{1}/{\alpha}})}{j!},
    \end{equation}
    as $n\to\infty$, and there exists a constant $C>0$ such that
       \equh\label{eq:sum b_n lower}
    \summ \ell1{\ell_n}\esp |b_{N_\ell(nt)}| \le C\sigma_n^2t^{\alpha'}\epsilon^{1-\alpha'/\alpha}
    \eque
    for all $n\in\N$. As a consequence, there exists a constant $C>0$ such that for all $t\in(0,1]$, 
\[
    \sif\ell1\esp |b_{N_\ell(nt)}|\le C \sigma_n^2t^{\alpha'}.
\]
\end{lemma}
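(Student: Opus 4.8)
The plan is to prove Lemma~\ref{TechEst} by reducing everything to one basic asymptotic computation: for a fixed polynomially-bounded sequence $(b_j)$ and fixed $t>0$, evaluate $\esp b_{N_\ell(nt)}$ in terms of the Poisson parameter $ntp_\ell$, and then sum over $\ell$ using the regular variation of $v$. I would write
\[
\esp b_{N_\ell(nt)} = \sif j1 b_j \frac{(ntp_\ell)^j}{j!}e^{-ntp_\ell} =: g(ntp_\ell),
\]
so that $\sif\ell1 \esp b_{N_\ell(nt)} = \sif\ell1 g(ntp_\ell)$, and similarly for the truncated sums. The bound $|b_j|\le Cj^{\alpha'}$ gives $|g(x)| \le C\esp N^{\alpha'}$ where $N\sim{\rm Poisson}(x)$, hence $|g(x)| \le C(x^{\alpha'}\vee x)$ — more precisely $|g(x)|\le Cx^{\alpha'}$ for $x\ge 1$ and $|g(x)|\le Cx$ for $x\le 1$ (using $b_0=0$, so $g(x)=O(x)$ near $0$). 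This is the elementary input; the regular-variation machinery does the rest.

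For the first assertion \eqref{eq:sum b_n upper}, the idea is to convert the sum over $\ell$ into an integral against the counting measure $v$. Since $\ell \ge \ell_n = \floor{\epsilon v(n)}$ corresponds (via $p_\ell$ decreasing) to $1/p_\ell \lesssim n/(\epsilon^{1/\alpha}\cdot\text{const})$, i.e.\ to the range where $ntp_\ell$ is of constant order or larger, one writes
\[
\sum_{\ell=\ell_n}^\infty g(ntp_\ell) = \int_{(0,\,1/p_{\ell_n}]} g\!\left(\frac{nt}{x}\right) v(\d x),
\]
substitutes $x = n r / (\text{const})$, uses $v(n r/c)/v(n) \to (r/c)^\alpha$ by regular variation together with a uniform convergence (Potter-type) bound to justify dominated convergence, and recognizes the limiting integral $\int_0^{\epsilon^{-1/\alpha}} g(t/r)\,\alpha r^{-\alpha-1}\,\d r$. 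Expanding $g$ back into its series and integrating term by term (dominated by the polynomial bound on $b_j$ and the fast decay of the Poisson tail, as in the proof of Proposition~\ref{prop:Z}) produces exactly $\alpha t^\alpha \sif j1 b_j\gamma(j-\alpha, t\epsilon^{-1/\alpha})/j!$, after the change of variables $y = t/r$ inside each term. The main obstacle here is bookkeeping the regular-variation estimate carefully enough to get an honest asymptotic equivalence ($\sim$) rather than just an order bound, and to handle the slowly varying $L$ hidden in $v$; this is where a uniform Potter bound $v(\lambda x)/v(x) \le 2\lambda^{\alpha}(1\vee\lambda^{-\delta})$ for $x$ large is needed so that the term-by-term limit and the dominated convergence both go through.

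For the second assertion \eqref{eq:sum b_n lower}, the range is $\ell \le \ell_n - 1 \le \epsilon v(n)$, i.e.\ the large frequencies, where $ntp_\ell$ is large, so I would use the bound $|g(x)| \le Cx^{\alpha'}$ valid for $x$ bounded below — but here $x = ntp_\ell$ can be as small as order $nt p_{\ell_n} \asymp t\epsilon^{-1/\alpha}$, which is $\ge$ const when $\epsilon$ is small and $t$ is not too small; for the genuinely small-$x$ contributions one still has $|g(x)|\le Cx \le Cx^{\alpha'}$ since $\alpha'<1$ when $x\le 1$, so uniformly $|g(ntp_\ell)| \le C(ntp_\ell)^{\alpha'}$. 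Summing, $\summ\ell1{\ell_n-1}(ntp_\ell)^{\alpha'} = (nt)^{\alpha'}\summ\ell1{\ell_n-1}p_\ell^{\alpha'}$, and since $p_\ell \asymp \ell^{-1/\alpha}$ one gets $\summ\ell1{\ell_n-1}p_\ell^{\alpha'} \asymp \ell_n^{1-\alpha'/\alpha}$ (using $\alpha'/\alpha<1$), and $\ell_n \asymp \epsilon v(n) \asymp \epsilon\sigma_n^2$, while $(nt)^{\alpha'} \asymp t^{\alpha'} (\sigma_n^2)^{\alpha'/\alpha}\cdot\text{const}$; multiplying, $(\sigma_n^2)^{\alpha'/\alpha}\cdot(\epsilon\sigma_n^2)^{1-\alpha'/\alpha} = \sigma_n^2\epsilon^{1-\alpha'/\alpha}$, giving the claimed $C\sigma_n^2 t^{\alpha'}\epsilon^{1-\alpha'/\alpha}$. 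Again the only subtlety is absorbing the slowly varying part of $v$ into the constant uniformly in $n$ (Karamata's theorem for the partial sums of $p_\ell^{\alpha'}$), which is routine. The final ``consequence'' is immediate: add \eqref{eq:sum b_n upper} (bounded by $C\sigma_n^2 t^\alpha \le C\sigma_n^2 t^{\alpha'}$ for $t\in(0,1]$, since the $\gamma$-series is summable and $t^\alpha\le t^{\alpha'}$) and \eqref{eq:sum b_n lower} with $\epsilon$ fixed, say $\epsilon=1$ (so $\ell_n\asymp v(n)$ and the whole sum is covered), to get $\sif\ell1\esp b_{N_\ell(nt)}\le C\sigma_n^2 t^{\alpha'}$.
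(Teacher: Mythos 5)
Your plan is the same as the paper's: represent $\esp b_{N_\ell(nt)}$ as $g(ntp_\ell)$ with $g(x) = \sum_j b_j x^j e^{-x}/j!$, convert the sum over $\ell\ge\ell_n$ to an integral against $v$, pass to the limit using regular variation with Potter bounds for domination, and for $\ell<\ell_n$ use $|g(x)|\le Cx^{\alpha'}$ together with $\sum_{\ell\le\ell_n}p_\ell^{\alpha'}\asymp \ell_n^{1-\alpha'/\alpha}$. The paper does exactly this, and your treatment of \eqref{eq:sum b_n lower} and the ``consequence'' is essentially verbatim what the paper writes.

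Two things should be flagged, though. First, there are sign/direction slips throughout the sketch of \eqref{eq:sum b_n upper}: the range $\ell\ge\ell_n$ corresponds to $1/p_\ell\ge 1/p_{\ell_n}\approx n\epsilon^{1/\alpha}$ (so $ntp_\ell\lesssim t\epsilon^{-1/\alpha}$, i.e.\ of constant order \emph{or smaller}), and the integral representation should be $\int_{[1/p_{\ell_n},\infty)}g(nt/x)\,v(\d x)$, not $\int_{(0,1/p_{\ell_n}]}$. Correspondingly, the intermediate ``limiting integral'' you write, $\int_0^{\epsilon^{-1/\alpha}}g(t/r)\alpha r^{-\alpha-1}\d r$, is inconsistent: under the substitution $r=np_\ell$ (i.e.\ $r=n/x$) it should read $\int_0^{\epsilon^{-1/\alpha}}g(rt)\,\alpha r^{-\alpha-1}\,\d r$ — expanding $g$ and integrating term by term does recover $\alpha t^\alpha\sum_j b_j\gamma(j-\alpha,t\epsilon^{-1/\alpha})/j!$, so the destination is right but the waypoint is misstated. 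Second, the step ``substitute $x=nr$ and use $v(nr)/v(n)\to r^\alpha$'' cannot be applied directly to $\int g(nt/x)\,v(\d x)$ since $v$ is a point measure; the paper resolves this by the substitution $z=nt/x$ followed by integration by parts, converting $\int(\cdots)\,v(\d x)$ into $\int(\cdots)'\,v(nt/z)\,\d z$ plus a boundary term $B_n$. This produces an explicit $A_n-B_n$ decomposition, and the boundary term contributes $-R_\epsilon$ in the limit, which cancels a matching $+R_\epsilon$ arising from the incomplete-gamma identity $s\gamma(s,x)-\gamma(s+1,x)=x^s e^{-x}$. Your sketch skips this bookkeeping. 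A vague-convergence argument would also work here in place of integration by parts (and since $b_0=0$ one has the needed decay $g(x)=O(x)$ near $0$), so this is a rigor gap rather than a wrong approach, but as written the change-of-variables step is not justified.
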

\begin{proof}
We start by writing
\begin{align*}
	 \sum_{\ell=\ell_n+1}^\infty\E b_{N_\ell(nt)}&=\int_{1/p_{\ell_n}}^\infty \sum_{j=1}^\infty \frac{b_j e^{-{nt}/{x}}}{j!}\pp{\frac{nt}x}^j v(\d x) \\
	 &= -\hspace{-.1cm}\int_{1/p_{\ell_n}}^{\infty}\sum_{j=1}^\infty \frac{b_j e^{-nt/x}}{j!} \left(\frac{nt}{x^2}-\frac{j}{x}\right)\left(\frac{nt}{x}\right)^{j} v(x)\d x\hspace{-.05cm}-\hspace{-.05cm} v(p_{\ell_n}\inv)\sum_{j=1}^\infty \frac{b_j e^{-ntp_{\ell_n}} (ntp_{\ell_n})^j  }{j!}\\
	 &= \int_{0}^{ntp_{\ell_n}}\sum_{j=1}^\infty \frac{b_j e^{-z} (j-z)z^{j-1}}{j!} v\left(\frac{nt}{z}\right)\d z- v(p_{\ell_n}\inv)\sum_{j=1}^\infty \frac{b_j e^{-ntp_{\ell_n}} (ntp_{\ell_n})^j  }{j!}\\
	 &=:A_n-B_n.
\end{align*}
Here, the first equality follows from the definition of Riemann--Stieltjes integral and also the assumption that $(p_\ell)_{\ell\in\N}$ is non-increasing, the second  follows from integration by parts, and the third follows from a change of variables.
We claim that 
\begin{equation}\label{epsilonint}
    \lim_{n\rightarrow\infty}\frac{A_n}{\sigma_n^2}= t^\alpha \int_0^{t\epsilon^{-{1}/{\alpha}}} \sum_{j=1}^\infty \frac{b_je^{-z}(j-z)z^{j-1-\alpha}}{j!}\d z,
\end{equation} 
and
\begin{equation}\label{epsilonterm1}
    \lim_{n\rightarrow\infty}\frac{B_n}{\sigma_n^2}= \epsilon \sum_{j=1}^\infty \frac{b_j e^{-t\epsilon^{-{1}/{\alpha}}} (t\epsilon^{-{1}/{\alpha}})^j }{j!}:=R_\epsilon.
\end{equation}

To prove $\eqref{epsilonint}$ note that the function 
\begin{equation*}
    g_n(z):=\ind_{[0,ntp_{\ell_n}]}(z)\sum_{j=1}^\infty \frac{b_j e^{-z} (j-z)z^{j-1}}{j!}\frac{v(nt/z)}{v(n)},
\end{equation*}
converges pointwise to the integrand on the right-hand side of \eqref{epsilonint} multiplied by $t^\alpha$. It remains to justify the domination condition in the dominated convergence theorem. To this end note that since 
\[
ntp_{\ell_n}\sim nt\mathsf C_0\ell_n^{-1/\alpha}\sim nt\mathsf C_0\pp{\epsilon\mathsf C_0^\alpha n^\alpha}^{-1/\alpha}\rightarrow t \epsilon^{-1/\alpha},
\] then for some $\delta'>0$ for $n$ large enough such that $ntp_{\ell _n}<t\epsilon^{-1/\alpha}+\delta'$, and 
an application of Potter's theorem 
(see Theorem 1.5.3 in \cite{bingham87regular}) on $v(nt/z)/v(nt)$ we have for 
$\delta\in (0,1-\alpha)$ and some constant $C>0$ depending on $\delta$,

\begin{equation*}
    |g_n(z)|\leq C\ind_{[0,t\epsilon^{-{1}/{\alpha}}+\delta']}(z)\sum_{j=1}^\infty \frac{|b_j| e^{-z} (j+z)z^{j-1-\alpha}}{j!}\max\ccbb{z^\delta,z^{-\delta}}:= g(z),      
\end{equation*}
for all $n$ large enough, and $g$ is integrable.

To prove \eqref{epsilonterm1}, we write
\[
    \frac{B_n}{\sigma_n^2}=\frac{v(p_{\ell_n}\inv)}{\sigma_n^2}\sum_{j=1}^\infty \frac{b_j (ntp_{\ell_n})^je^{-ntp_{\ell_n}}}{j!},
\]
and observe that $ntp_{\ell_n}\rightarrow t\epsilon^{-1/\alpha}$ and  
\[
v(p_{\ell_n}\inv) \sim \ell_n.
\]
Then, \eqref{epsilonterm1} follows again by the dominated convergence theorem. 

Next, write $\epsilon_0=t\epsilon^{-{1}/{\alpha}}$. Note that the right-hand side of \eqref{epsilonint} can be written as
\begin{align*}
    t^\alpha \sum_{j=1}^\infty \frac{b_j (j\gamma(j-\alpha,\epsilon_0)-\gamma(j+1-\alpha,\epsilon_0))}{j!}&=t^\alpha \sum_{j=1}^\infty \frac{b_j(\alpha\gamma(j-\alpha,\epsilon_0)+\epsilon_0^{j-\alpha} e^{-\epsilon_0})}{j!}\\
    &=t^\alpha \sum_{j=1}^\infty \frac{b_j \alpha\gamma(j-\alpha,t\epsilon^{-{1}/{\alpha}})}{j!}+R_\epsilon,
\end{align*}
where we used $s\gamma(s,x) - \gamma(s+1,x) = x^se^{-x}$ in the first equality. Then combining \eqref{epsilonint} and \eqref{epsilonterm1} we have
\begin{align*}
    \lim_{n\rightarrow\infty}\frac{1}{\sigma_n^2}\sum_{\ell=\ell_n+1}^\infty\E b_{N_\ell(nt)}&=\lim_{n\rightarrow\infty}\left( \frac{A_n}{\sigma_n^2}-\frac{B_n}{\sigma_n^2}\right)
    =t^\alpha \sum_{j=1}^\infty \frac{b_j \alpha\gamma(j-\alpha,t\epsilon^{-1/\alpha})}{j!}+R_\epsilon - R_\epsilon\\
    &=\alpha t^\alpha \sum_{j=1}^\infty \frac{b_j \gamma(j-\alpha,t\epsilon^{-1/\alpha})}{j!}.
\end{align*}
For the second part, we have
\begin{align*}
\summ \ell1{\ell_n}\esp 
|b_{N_\ell(nt)}|&\le C\summ \ell1{\ell_n}\esp N_\ell(nt)^{\alpha'} \le C \summ \ell1{\ell_n} (\esp N_\ell(nt))^{\alpha'}  = C\summ \ell1{\ell_n}(p_\ell nt)^{\alpha'}\\
& \le  C(nt)^{\alpha'} \ell_n^{1-\alpha'/\alpha} \le C\epsilon^{1-\alpha'/\alpha}\sigma_n^2 t^{\alpha'},
\end{align*}
where in the second inequality we used $\esp X^{\alpha'}\le (\esp X)^{\alpha'}$ with $\alpha'<1$ for non-negative random variable $X$. 
\end{proof}
 
\begin{proof}[Proof of Lemma~\ref{CLTepsilon}]
     By the Cram\'er--Wold device it suffices to show that, for any 
      $d\in \N$, distinct $t_1\dots,t_d\in [0,1]$ and any $c_1,\dots,c_d\in\R$, 
\[
        \sum_{i=1}^d c_i\frac{\widetilde{W}_n\topp\epsilon(t_i)}{\sigma_n}\Rightarrow \sum_{i=1}^d c_i  \calZ_\alpha\topp\epsilon(t_i).
\]
Note that
  \[
  \summ i1d c_i \wt W_n\topp \epsilon(t_i) = \sif\ell{\ell_n+1}\summ i1d c_i\pp{a_{N_\ell(nt_i)}-\esp a_{N_\ell(nt_i)}}
  \]
  is a summation of independent random variables, and hence we can apply the central limit theorem for an infinite triangular array of independent random variables satisfying a  Lyapunov-type condition \eqref{lypanovCond} below (\citep[Theorem 8.4.1]{borovkov13probability}). 
  
We first show the convergence of variance. Since
    \begin{align*}
        \Var\left(\sum_{i=1}^d c_i\widetilde{W}^{(\epsilon)}_n(t_i)\right)&=\sum_{i,j=1}^d c_ic_j\Cov \pp{\widetilde{W}^{(\epsilon)}_n(t_i),\widetilde{W}^{(\epsilon)}_n(t_j)},
    \end{align*}
    it suffices to show, for $s<t$, 
    \equh\label{eq:cov =}
\limn\frac1{\sigma_n^2}    \cov\pp{\wt W_n\topp\epsilon(s),\wt W_n\topp\epsilon(t)} = \cov\pp{\calZ_\alpha\topp\epsilon(s),\calZ_\alpha\topp\epsilon(t)}.
\eque
First, we provide the expression of the covariance function in the limit:
\begin{multline*}
\cov\pp{\calZ\topp{\epsilon}_\alpha(s),\calZ\topp{\epsilon}_\alpha(t)}    \\
 = \alpha t^\alpha \sif k1 \frac{F_{\vec a,s,t}\topp 1(k)\gamma(k-\alpha,t\epsilon^{-1/\alpha})}{k!} -  \alpha(s+t)^\alpha \sif k1 \frac{F_{\vec a,s,t}\topp 2(k)\gamma(k-\alpha,(t+s)\epsilon^{-1/\alpha})}{k!}.
    \end{multline*}
The calculation is essentially the same as in Proposition \ref{prop:Z},  replacing $\int_0^\infty$ by $\int_0^{\epsilon^{-1/\alpha}}$. We omit the details.
Next, we compute the limit covariance. Write
\[
\cov\pp{\wt W_n\topp\epsilon(s),\wt W_n\topp\epsilon(t)} = \sif\ell{\ell_n+1}\esp \pp{a_{N_\ell(ns)}a_{N_\ell(nt)}} - \sif\ell{\ell_n+1}\esp a_{N_\ell(ns)}\esp a_{N_\ell(nt)}.
\]
We have
\begin{align*}
\sif\ell{\ell_n+1}\esp \pp{a_{N_\ell(ns)}a_{N_\ell(nt)}} & = \sif\ell{\ell_n+1}\sif j1\sif {j'}0a_ja_{j+j'}\frac{(nsp_\ell)^j(n(t-s)p_\ell)^{j'}}{j!j'!}e^{-ntp_\ell}\\
& = \sif\ell{\ell_n+1}\sif k1\summ j1{k}a_ja_k\frac{(nsp_\ell)^j(n(t-s)p_\ell)^{k-j}}{j!(k-j)!}e^{-ntp_\ell}\\
& = \sif\ell{\ell_n+1}\sif k1 \frac{F_{\vec a,s,t}\topp1(k)(ntp_\ell )^k}{k!}e^{-ntp_\ell} = \sif\ell{\ell_n+1}\esp F_{\vec a,s,t}\topp 1(N_\ell(nt)).
\end{align*}
 Thus, by Lemma \ref{TechEst} with $b_k = F_{\vec a,s,t}\topp 1(k)$,  we have
\[
\sif\ell{\ell_n+1}\esp \pp{a_{N_\ell(ns)}a_{N_\ell(nt)}}\sim \sigma_n^2 \alpha t^\alpha \sif k1 \frac{F_{\vec a,s,t}\topp1(k)\gamma(k-\alpha, t\epsilon^{-1/\alpha})}{k!}.
\]
Similarly
\begin{align*}
\sif\ell{\ell_n+1}\esp a_{N_\ell(ns)}\esp a_{N_\ell(nt)} & = \sif\ell{\ell_n+1}\sif j1\sif{j'}1a_ja_{j'}\frac{(nsp_\ell)^j(ntp_\ell)^{j'}}{j!j'!}e^{-n(s+t)p_\ell}\\
& = \sif\ell{\ell_n+1}\sif k1 F_{\vec a,s,t}\topp 2(k)\frac{(n(s+t)p_\ell)^k}{k!}e^{-n(s+t)p_\ell}
\\
& \sim  \sigma_n^2\alpha(s+t)^\alpha \sif k1 \frac{F_{\vec a,s,t}\topp 2(k)\gamma(k-\alpha,(t+s)\epsilon^{-1/\alpha})}{k!}.
\end{align*}
(Recall that $|F_{\vec a,s,t}\topp i(k)|\le Ck^{2\beta}, i=1,2$ by assumption.) We have thus proved \eqref{eq:cov =}.

To complete the proof of the central limit theorem it remains to verify
\begin{equation}\label{lypanovCond}
  \limn    \frac{1}{\sigma_n^{2+\delta}}\sum_{\ell=\ell_n+1}^\infty \esp\abs{\summ i1d c_i\pp{a_{N_\ell(nt_i)}-\E a_{N_\ell(nt_i)}}}^{2+\delta} =0,
\end{equation}
for some $\delta>0$. We have
\begin{align*}
\frac1{\sigma_n^{2+\delta}}\sum_{\ell=\ell_n+1}^\infty \esp\abs{\summ i1d c_i\pp{a_{N_\ell(nt_i)}-\E a_{N_\ell(nt_i)}}}^{2+\delta}& \le \frac C{\sigma_n^{2+\delta}}\sif\ell{\ell_n+1}\summ i1d \esp\abs{a_{N_\ell(nt_i)}-\E a_{N_\ell(nt_i)}}^{2+\delta}\\
& \le \frac C{\sigma_n^{2+\delta}} \sif\ell{\ell_n+1}\summ i1d \esp |a_{N_\ell(nt_i)}|^{2+\delta}\le \frac {C\sigma_n^2}{\sigma_n^{2+\delta}}\to 0,
\end{align*}
where in the second inequality, we used the fact that, by convexity and Jensen's inequality,  for random variable $X$,   
\equh\label{eq:Jensen}
\E|X-\esp X|^{2+\delta}\leq 2^{1+\delta}\spp{\E|X|^{2+\delta}+|\E X|^{2+\delta}}\leq 2^{2+\delta} \E |X|^{2+\delta},
\eque 
and in the third inequality we used Lemma \ref{TechEst} under the assumption $b_j = |a_j|^{2+\delta} \le C j^{\alpha'}$ for some $\alpha'<\alpha$: for this it suffices to take $\delta>0$ such that $\alpha' = \beta(2+\delta)<\alpha$. 
Therefore, \eqref{lypanovCond} is verified.\end{proof}

\begin{proof}[Proof of Lemma~\ref{remainderLemma}]
We shall prove,         there exist constants $C>0$ and $(d_\epsilon)_{\epsilon>0}$ with $\lim_{\epsilon\downarrow0}d_\epsilon=0$ such that 
  \[
        \Var(\widetilde{T}_{n}^{(\epsilon)}(t))\leq C \sigma_n^2d_\epsilon, \text{ for all } n\in \N.
\]
This then implies the desired result. 
This is then an immediate consequence of \eqref{eq:sum b_n lower}, which yields
\begin{equation*}
\Var\left(\widetilde{T}_{n}^{(\epsilon)}(t)\right)\leq \sum_{\ell=1}^{\ell_n}\E (a_{N_\ell(nt)}^2)\le C \epsilon^{1-{2\beta}/{\alpha}} n^\alpha t^{2\beta}.
\end{equation*}
\end{proof}

\subsection{Tightness for the Poissonized model}\label{sec:tightness}
To prove tightness, we proceed by another approximation of $\wt W_n$. Set 
\[
j_n := \floor{\frac n{\log n}}, n\ge 2,
\]
and $j_1 := 1$. Consider the following truncated process
\[
\what W_n(t):=\sif\ell1 \pp{\what a_{n,\ell}(t) - \esp \what a_{n,\ell}(t)} \qmwith \what a_{n,\ell}(t):=a_{N_\ell(nt)}\inddd{N_\ell(n)\le j_n}, t\in[0,1].
\]
Therefore, in order to show that $\sigma_n\inv(\wt W_n(t))_{t\in[0,1]}$ is tight in $D([0,1])$, we shall establish that both 
\[
\pp{\frac{\what W_n(t)}{\sigma_n}}_{t\in[0,1]} \qmand \pp{\frac{\wt W_n(t) - \what W_n(t)}{\sigma_n}}_{t\in[0,1]}
\]
are tight. These are established in Propositions \ref{prop:tight1} and \ref{prop:tight2} below respectively.

\begin{proposition}\label{prop:tight1} Assume $|a_{i+j}-a_i| \le Cj^\beta$ with $\beta\in[0,\alpha/2)$. 
\begin{enumerate}[(i)]
\item For all 
$p\in[1,\alpha/{2\beta})$,
 we have, for some constant $C$, 
\equh\label{eq:increment moment}
\esp\abs{\what W_n(t)-\what W_n(s)}^{2p} \le C \pp{\sigma_n^2|t-s|^{\alpha} + \sigma_n^{2p}|t-s|^{\alpha p}}, \mfa 0\le s<t\le 1, n\in\N.
\eque
\item For some constant $C>0$, almost surely, 
\equh\label{eq:lem 3.5}
\abs{\what W_n(t)-\what W_n(s)}\le C (N(nt)-N(ns) + n(t-s)), \mfa 0\le s\le t\le 1.
\eque 
\end{enumerate}
As a consequence, if in addition 
\[
\beta\in\bigg[0,\frac{\alpha^2}2\bigg) \qmand p\in\pp{\frac1\alpha,\frac{\alpha}{2\beta}}, 
\]
the process $\sigma_n\inv\what W_n$ is tight in $D([0,1])$. 
\end{proposition}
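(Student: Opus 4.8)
\textbf{Proof plan for Proposition \ref{prop:tight1}.}

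The plan is to establish the two displayed estimates \eqref{eq:increment moment} and \eqref{eq:lem 3.5} separately, and then combine them with a standard moment criterion for tightness in $D([0,1])$ (along the lines of \citep[Theorem 13.5]{billingsley99convergence}), where the uniform bound \eqref{eq:lem 3.5} is used to control the behavior near the diagonal and at triple points while the moment bound \eqref{eq:increment moment} provides the essential modulus-of-continuity estimate. For part (ii), the key observation is that $|\what a_{n,\ell}(t) - \what a_{n,\ell}(s)|$ is non-zero only if the Poisson process $N_\ell$ jumps in $(ns, nt]$ \emph{and} $N_\ell(n)\le j_n$; when it is non-zero, the increment condition \eqref{eq:a increment} with $\beta<\alpha/2<1$ gives $|a_{N_\ell(nt)} - a_{N_\ell(ns)}| \le C(N_\ell(nt)-N_\ell(ns))^\beta \le C(N_\ell(nt)-N_\ell(ns))$, so summing over $\ell$ and separately bounding the centering term by its expectation (which is $O(n(t-s))$ since $\sum_\ell p_\ell = 1$ and $\esp N_\ell(nt) - \esp N_\ell(ns) = p_\ell n(t-s)$) yields \eqref{eq:lem 3.5}; one should be slightly careful to use the monotone quantity $N(nt) := \sum_\ell N_\ell(nt)\inddd{N_\ell(n)\le j_n}$ or simply $\sum_\ell (N_\ell(nt) - N_\ell(ns))$ as the dominating process so that the bound is genuinely monotone in the interval, as required for the triple-point estimate.

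The main work is part (i), the moment bound. Here $\what W_n(t) - \what W_n(s) = \sum_{\ell=1}^\infty \xi_{n,\ell}$ is a sum of independent centered random variables with $\xi_{n,\ell} = (a_{N_\ell(nt)} - a_{N_\ell(ns)})\inddd{N_\ell(n)\le j_n} - \esp[\cdots]$. The natural tool is Rosenthal's inequality, which for $p\ge 1$ bounds $\esp|\sum_\ell \xi_{n,\ell}|^{2p}$ by $C\big((\sum_\ell \esp \xi_{n,\ell}^2)^p + \sum_\ell \esp|\xi_{n,\ell}|^{2p}\big)$. For the variance term: on the event of a jump, $|a_{N_\ell(nt)} - a_{N_\ell(ns)}|^2 \le C(N_\ell(nt)-N_\ell(ns))^{2\beta}$, and taking expectations gives $\esp \xi_{n,\ell}^2 \le C\,\esp[(N_\ell(n(t-s)))^{2\beta}\wedge(\text{something})]$; using $\esp N_\ell(n(t-s))^{2\beta}\le(\esp N_\ell(n(t-s)))^{2\beta} = (p_\ell n(t-s))^{2\beta}$ when $2\beta\le 1$ and summing over $\ell$ via $\sum_\ell p_\ell^{2\beta} \asymp v(n)^{?}$-type regular-variation estimates (more precisely, via the same incomplete-gamma/Lemma \ref{TechEst}-style computation that underlies \eqref{eq:cov Z}, giving $\sum_\ell \esp(\cdots) \asymp \sigma_n^2 (t-s)^\alpha$ when $\beta<\alpha/2$) produces the term $\sigma_n^2|t-s|^\alpha$, raised to the $p$-th power. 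For the $2p$-th moment term, similarly $\esp|\xi_{n,\ell}|^{2p} \le C\,\esp(N_\ell(n(t-s)))^{2p\beta}$; since we assume $p<\alpha/(2\beta)$ we have $2p\beta<\alpha$, so Lemma \ref{TechEst} with $b_j = |a_j|^{2p}$ (exponent $2p\beta<\alpha$) applies and $\sum_\ell \esp|\xi_{n,\ell}|^{2p} \le C\sigma_n^2|t-s|^{2p\beta} \le C\sigma_n^2|t-s|^{\alpha p}$ for $|t-s|\le 1$ — wait, one needs $2p\beta \ge \alpha p$, i.e. $2\beta\ge\alpha$, which fails; instead one bounds $|t-s|^{2p\beta}$ against $|t-s|^{\alpha}$ if $2p\beta\ge\alpha$, but more carefully the truncation $N_\ell(n)\le j_n$ should be invoked here: on that event $N_\ell(nt)\le N_\ell(n)\le j_n = \floor{n/\log n}$, so $|\xi_{n,\ell}| \le C j_n^\beta$ deterministically, which combined with the second-moment bound gives $\esp|\xi_{n,\ell}|^{2p}\le (Cj_n^\beta)^{2p-2}\esp\xi_{n,\ell}^2$, and summing yields $j_n^{\beta(2p-2)}\sigma_n^2|t-s|^\alpha$; absorbing $j_n^{\beta(2p-2)}$ into $\sigma_n^{2p-2}$ requires $j_n^{\beta(2p-2)} = o(\sigma_n^{2p-2}) = o(n^{\alpha(p-1)})$, i.e. $\beta(2p-2)\log j_n \lesssim \alpha(p-1)\log n$, which holds precisely because $\beta<\alpha^2/2 < \alpha/2$ gives $2\beta/\alpha<1$ — this is where the truncation level $j_n$ and the stronger constraint $\beta<\alpha^2/2$ enter, and why $p>1/\alpha$ is also needed to make Kolmogorov/Billingsley tightness applicable.

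\textbf{Main obstacle.} The delicate point — and the reason for the elaborate truncation at $j_n = \floor{n/\log n}$ rather than working with $\wt W_n$ directly — is controlling the higher-moment (Rosenthal "singleton") term $\sum_\ell \esp|\xi_{n,\ell}|^{2p}$ so that it is dominated by $\sigma_n^{2p}|t-s|^{\alpha p}$ uniformly in $n$; without the deterministic bound $|\xi_{n,\ell}|\le Cj_n^\beta$ furnished by the truncation, the moment estimates for unbounded $a_j$ are simply not good enough, and it is exactly here that the hypothesis $\beta<\alpha^2/2$ (as opposed to $\beta<\alpha/2$) is consumed, since one needs $j_n^{2\beta}$ — up to logarithmic factors essentially $n^{2\beta}$ — to be comparably small relative to $\sigma_n^2 \asymp n^\alpha$ after the relevant power is taken. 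The complementary piece, tightness of $(\wt W_n - \what W_n)/\sigma_n$ in Proposition \ref{prop:tight2}, is where one shows the truncation removes a negligible part, using that the contribution of urns with $N_\ell(n) > j_n$ is asymptotically small because by round $n$ very few urns have accumulated more than $n/\log n$ balls.
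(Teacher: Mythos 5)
Your overall architecture — Rosenthal's inequality for part (i), the pathwise increment bound for part (ii), and a chaining/moment criterion for tightness — matches the paper, and your treatment of part (ii) is essentially right. The gap is in part (i), in the two Rosenthal summands, and it is a substantive one.

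For the variance term $\sum_\ell\esp\xi_{n,\ell}^2$ and the singleton term $\sum_\ell\esp|\xi_{n,\ell}|^{2p}$, the paper uses a dedicated estimate (Lemma \ref{lem:moments what a}) that exploits the truncation $N_\ell\le j_n$ to give $\sum_\ell\esp\pp{N_\ell^{\alpha'}(n(t-s))\inddd{N_\ell\le j_n}}\le C\sigma_n^2|t-s|^{\alpha}$ with exponent $\alpha$ in $|t-s|$ for \emph{any} $\alpha'<\alpha$ — this is the key improvement over Lemma \ref{TechEst}, which only yields $C\sigma_n^2 t^{\alpha'}$. Your sketch conflates the two: the ``Lemma \ref{TechEst}-style incomplete-gamma computation'' does not give exponent $\alpha$, and the Jensen bound $\sum_\ell (p_\ell n(t-s))^{2\beta}$ diverges as stated. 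You correctly notice the direct Lemma-\ref{TechEst}-style bound gives $|t-s|^{2p\beta}$, which is too weak; but your salvage — using $|\xi_{n,\ell}|\le Cj_n^\beta$ to write $\esp|\xi_{n,\ell}|^{2p}\le (Cj_n^\beta)^{2p-2}\esp\xi_{n,\ell}^2$ — produces $j_n^{\beta(2p-2)}\sigma_n^2|t-s|^\alpha$, and absorbing $j_n^{\beta(2p-2)}$ into $\sigma_n^{2p-2}$ only yields $\sigma_n^{2p}|t-s|^\alpha$, which is neither the form in \eqref{eq:increment moment} nor sufficient for tightness: plugging $\sigma_n^{2p}|t-s|^\alpha$ into the chaining argument of Lemma \ref{lem:Olivier} gives a term $\delta^\alpha\sum_{k\le k_n}\eta_k^{-2p}2^{k(1-\alpha)}\approx 2^{k_n(1-\alpha)}\to\infty$ with no compensating $\sigma_n^{-2(p-1)}$ factor. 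Thus the alternative route genuinely fails.

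Two smaller misattributions: (a) the role of the truncation at $j_n$ in the paper is to upgrade the $t$-exponent from $\alpha'$ to $\alpha$ in Lemma \ref{lem:moments what a} (via the $D$-trick and Potter bound on the $\int_1^\infty$ part), not merely to furnish a deterministic $L^\infty$ bound on $\xi_{n,\ell}$; (b) the hypothesis $\beta<\alpha^2/2$ is not ``consumed'' in any moment absorption — it enters only to make the interval $(1/\alpha,\alpha/(2\beta))$ for $p$ nonempty, so that a single $p$ can satisfy both $2\beta p<\alpha$ (needed for Lemma \ref{lem:moments what a}) and $\alpha p>1$ (needed in Lemma \ref{lem:Olivier}). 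Finally, the paper does not invoke a Billingsley-type moment criterion directly: because the moment bound \eqref{eq:increment moment} carries the extra term $\sigma_n^2|t-s|^\alpha$ with $\alpha<1$, it uses the bespoke chaining Lemma \ref{lem:Olivier}, which combines (i) and (ii) at matched dyadic scales.
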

\begin{proof}
We first prove \eqref{eq:increment moment}. Write
\[
\wb a_{n,\ell}(t) = \what a_{n,\ell}(t) - \esp \what a_{n,\ell}(t). 
\] 
By Rosenthal's inequality (see \cite{merlevede13rosenthal}), we have
\equh\label{eq:Rosenthal}
\esp\abs{\what W_n(t)-\what W_n(s)}^{2p}\le C\pp{\sum_{\ell=1}^\infty \esp \abs{\wb a_{n,\ell}(t) - \wb a_{n,\ell}(s)}^{2p} + \pp{\sum_{\ell=1}^\infty \esp \abs{\wb a_{n,\ell}(t) - \wb a_{n,\ell}(s)}^2}^p}.
\eque
Recall \eqref{eq:Jensen}. 
Observe also that
\[
\abs{\what a_{n,\ell}(t) -  \what a_{n,\ell}(s)}\le  C\pp{N_\ell(nt)-N_\ell(ns)}^\beta\inddd{N_\ell(n)\le j_n} \eqd C N_\ell^\beta(n(t-s))\inddd{N_\ell(n)\le j_n},
\]
where the first step is in the almost sure sense and we used the property $|a_{i+j}-a_i|\le Cj^\beta$, and the second step we used the exchangeability of Poisson random measure. Applying \eqref{eq:Jensen} first and combining all the above, we arrive at
\equh\label{eq:p}
\esp\abs{\wb a_{n,\ell}(t) - \wb a_{n,\ell}(s)}^{2p}\le C \esp \abs{\what a_{n,\ell}(t) -  \what a_{n,\ell}(s)}^{2p} \le C \esp \pp{N_\ell^{2\beta p}(n(t-s))\inddd{N_\ell(n)\le j_n}}.
\eque
Taking $p$ such that $2\beta p<\alpha$, and applying  Lemma \ref{lem:moments what a} below, we complete the proof of \eqref{eq:increment moment}.

We next prove \eqref{eq:lem 3.5}. Recall we assume that $|a_{i+j}-a_i|\le Cj^\beta$ for some constant $C$ for all $i,j\in\N$. By definition,
\begin{align*}
\abs{\what W_n(t)-\what W_n(s)} 
& \le \sif \ell 1\abs{a_{N_\ell(nt)}-a_{N_\ell(ns)}} + \sif \ell 1\esp\abs{a_{N_\ell(nt)}-a_{N_\ell(ns)}}\\
& \le C\sif\ell1 \pp{N_\ell(nt)-N_\ell(ns)}^\beta + C\sif\ell1 \pp{\esp N_\ell(nt)-\esp N_\ell(ns)}^\beta \\
& \le C (N(nt)-N(ns) + n(t-s)).
\end{align*}

The claimed tightness now follows from Lemma \ref{lem:Olivier} below.
\end{proof}
\begin{lemma} \label{lem:moments what a} 
For every $r\in(0,\alpha/\beta)$, there exists $C>0$ such that
\[
\sif\ell1 \esp \pp{N_\ell(nt)^{\beta r}\inddd{N_\ell(nt)\le n/\log n}}\le  C\sigma_n^2 t^\alpha \quad \mfa n\in\N, t\in[0,1].
        \]
\end{lemma}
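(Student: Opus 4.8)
The plan is to strip the problem down to a deterministic estimate on $\sif\ell1\esp N_\ell(nt)^{\beta p}$ — the truncating indicator $\indd{N_\ell(nt)\le n/\log n}$ is irrelevant here, since it can only decrease the sum — and then to use a sharp two-sided bound on the fractional moments of a Poisson variable. Write $\alpha':=\beta p$, so that $\alpha'\in[0,\alpha)\subset[0,1)$ by hypothesis. For $X$ Poisson with mean $\lambda$: since $X$ is $\Z_{\ge0}$-valued and $\alpha'\le1$ we have $X^{\alpha'}\le X$, hence $\esp X^{\alpha'}\le\lambda$; and by Jensen (concavity of $u\mapsto u^{\alpha'}$) $\esp X^{\alpha'}\le\lambda^{\alpha'}$. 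Thus $\esp N_\ell(nt)^{\alpha'}\le\min\{m_\ell,m_\ell^{\alpha'}\}$ with $m_\ell:=ntp_\ell$. (For $\beta=0$ one reads $N^{0}:=\indd{N\ge 1}$, and $\esp N^{0}\le\min\{\lambda,1\}$ is the familiar bound behind the occupancy asymptotics.)

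Next I would split $\N$ at $m_\ell=1$. Since $v$ is regularly varying of index $\alpha$, the number of heavy indices $L:=\#\{\ell:m_\ell>1\}=v(nt)$ satisfies $L\asymp(nt)^\alpha$, and $p_\ell\asymp\ell^{-1/\alpha}$. On the heavy block, using $\esp N_\ell(nt)^{\alpha'}\le m_\ell^{\alpha'}$,
\[
\summ\ell1L m_\ell^{\alpha'}\le C(nt)^{\alpha'}\summ\ell1L\ell^{-\alpha'/\alpha}\le C(nt)^{\alpha'}L^{1-\alpha'/\alpha}\le C(nt)^{\alpha},
\]
where the partial-sum estimate $\summ\ell1L\ell^{-\alpha'/\alpha}\le CL^{1-\alpha'/\alpha}$ (valid since $\alpha'/\alpha<1$) is exactly the one already used in the proof of Lemma \ref{TechEst}. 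On the light block, using instead $\esp N_\ell(nt)^{\alpha'}\le m_\ell$ and $1/\alpha>1$,
\[
\sif\ell{L+1} m_\ell\le C\,nt\sif\ell{L+1}\ell^{-1/\alpha}\le C\,nt\,L^{1-1/\alpha}\le C(nt)^\alpha.
\]
Hence $\sif\ell1\esp N_\ell(nt)^{\alpha'}\le C(nt)^\alpha$, and finally $(nt)^\alpha\le C\,v(nt)\le C\,v(n)t^\alpha=C\sigma_n^2 t^\alpha$ for $t\in(0,1]$ by regular variation. The point worth highlighting is that using the exponent $1$ (rather than $\alpha'$) on the light block is precisely what upgrades the naive rate $(nt)^{\alpha'}$ to the sharp rate $(nt)^\alpha$ claimed in the lemma.

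There is no deep obstacle; the only genuinely fussy point, and the one I would be careful about, is uniformity over all $n\in\N$ and $t\in[0,1]$. The Karamata/Potter comparisons $\summ\ell1L\ell^{-\gamma}\le CL^{1-\gamma}$, $\sif\ell{L+1}\ell^{-\gamma}\le CL^{1-\gamma}$, and $v(nt)\le C\,v(n)t^{\alpha}$ hold with a fixed constant only once $nt$ is bounded away from $0$ (indeed $L=0$ when $nt<1/p_1$), so the regime where $nt$ is bounded must be dispatched separately: there one just uses $\sif\ell1\esp N_\ell(nt)^{\alpha'}\le\sif\ell1 m_\ell=nt$ together with $v(n)\ge cn^\alpha$ for $n$ large (so that $nt^{1-\alpha}=(nt)^{1-\alpha}n^\alpha t^{2\alpha-1}\cdots$, i.e.\ $(nt)^{1-\alpha}\le C$ forces $nt\le C\sigma_n^2t^\alpha$), and a direct check for the remaining finitely many small $n$, where the left side, as a function of $t$, is continuous, vanishes at $t=0$ faster than $t^\alpha$, and is comparable to the right side on each compact subinterval of $(0,1]$. (For $n=1$ the statement is to be read with $n/\log n$ replaced by $j_1=1$, as elsewhere in the paper.)
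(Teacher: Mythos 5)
Your proof is correct and takes a genuinely different route from the paper's. The paper rewrites the sum as the integral $\int_0^\infty\sum_{j=1}^{j_n}\frac{j^{\beta p}}{j!}(nt/x)^j e^{-nt/x}\,v(\d x)$, integrates by parts after the substitution $z=nt/x$, splits at $z=1$, and for $z>1$ invokes the summation-by-parts identity \eqref{eq:D trick} (using that $j\mapsto j^{\beta p}$ is nondecreasing) together with Potter's bound. Your argument instead works directly on the Poisson moments, $\esp N_\ell(nt)^{\beta p}\le\min\{ntp_\ell,(ntp_\ell)^{\beta p}\}$, and splits the sum over $\ell$ at $ntp_\ell=1$ --- which is exactly the paper's $z=1$ split seen before the integral transform. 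The decisive step shared by both proofs is placing the split at the $t$-dependent scale $\ell\asymp v(nt)$ rather than the fixed $\ell_n\asymp\epsilon\, v(n)$ of Lemma~\ref{TechEst}; that, not the truncating indicator, is what upgrades $t^{\beta p}$ to $t^\alpha$, and your opening observation that the indicator may be dropped outright makes this transparent (the paper likewise only introduces the truncation-dependent term of \eqref{eq:D trick} in order to discard it as having the favorable sign). Your version is shorter and fully elementary; the paper's version systematically reuses the machinery already set up around \eqref{epsilonint}. Two minor blemishes, neither fatal: the displayed manipulation in your last paragraph is garbled --- the intended reasoning is $nt\le C\sigma_n^2 t^\alpha\Leftrightarrow(nt)^{1-\alpha}\le Cn^\alpha/\sigma_n^2$, which holds since $nt$ is bounded in that regime and $n^\alpha/\sigma_n^2$ is bounded for large $n$; and, exactly as in the paper itself, the bound cannot literally hold ``$\mfa n\in\N$'' when $\sigma_n^2=v(n)=0$, so one must tacitly restrict to $n$ large enough for the uniformity statement to be meaningful.
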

The upper bound is to be compared with \eqref{eq:sum b_n upper} and \eqref{eq:sum b_n lower} where the upper bound for the above without the indicator function was obtained.  Without the indicator function, the upper bound in \eqref{eq:sum b_n upper} is of the same order while in \eqref{eq:sum b_n lower} we have obtained an upper bound of 
  order $C\sigma_n^2t^{\beta r}$. The gain here is we replace $t^{\beta r}$ by $t^\alpha$ (recall $r\beta<\alpha$). 
\begin{proof}
We start by writing
\begin{align*}
\sif\ell1 \esp \pp{N_\ell(nt)^{\beta r}\inddd{N_\ell(nt)\le n/\log n}}    &    =
        \int_{0}^\infty \sum_{j=1}^{j_n} \frac{j^{\beta r} e^{-{nt}/{x}}}{j!}\pp{\frac{nt}x}^j v(\d x)\nonumber\\
    &= \int_{0}^{\infty}\sum_{j=1}^{{j_n}} \frac{j^{\beta r} e^{-z} (j-z)z^{j-1}}{j!} v\left(\frac{nt}{z}\right)\d z,
    \end{align*}
   where
in the second step we used integration by parts and the substitution $z=nt/x$. 
Modifying the argument for the asymptotics of $A_n$ in \eqref{epsilonint} in the proof of Lemma \ref{TechEst}, we can show
\[
\int_0^1\sum_{j=1}^{j_n} \frac{j^{\beta r} e^{-z} (j-z)z^{j-1}}{j!} v\left(\frac{nt}{z}\right)\d z \le C \sigma_n^2t^\alpha \quad \mfa n\in\N, t\in[0,1].
\]
(Indeed, the left-hand side above takes the same form of $A_n$ with $b_j$ replaced by $j^{\beta r}$ and the domain of integral $[0,ntp_{\ell_n}]$ replaced by $[0,1]$. Recall that $\limn ntp_{\ell_n} = t\epsilon^{-1/\alpha}$.)
For the integral over $[1,\infty)$, we first mention an identity:  for any sequence $(D_j)_{j\ge 0}$ with $D_0 = 0$ and $j_n\in\N$,
\equh\label{eq:D trick}
\summ j1{j_n}\frac{D_j}{j!}(j-z)z^{j-1}  = \summ j0{j_n}\frac{D_{j+1}-D_j}{j!}z^{j} - \frac{D_{j_n+1}}{j_n!}z^{j_n}.
\eque
Then, it follows that  for $\delta\in(0,\alpha-r\beta)$ and 
 constant $C$ depending on $\delta$,
\begin{align*}
\int_1^\infty\sum_{j=1}^{j_n} \frac{j^{\beta r} e^{-z} (j-z)z^{j-1}}{j!} v\left(\frac{nt}{z}\right)\d z
& \le  v(nt) \int_1^\infty\summ j0{j_n}{\frac{(j+1)^{\beta r}-j^{\beta r}}{j!} z^j}e^{-z}\frac{v(nt/z)}{v(nt)}\d z\\
   & \le Cv(nt)\int_1^\infty\summ j0{j_n}\frac{(j+1)^{\beta r}-j^{\beta r}}{j!} z^{j-\alpha+\delta} e^{-z}\d z,
\end{align*}
where with $D_j = j^{\beta r}$ we applied \eqref{eq:D trick} in the first step (and removed the last negative term) and then applied Potter's bound in the second step  (here we needed the key property that $D_{j+1}-D_j\ge 0$).
It remains to show that the integral above is bounded by a constant that does not 
depend on $n$. Indeed, 
\begin{align*}
\int_1^\infty &\summ j0{j_n}\frac{(j+1)^{\beta r}-j^{\beta r}}{j!} z^{j-\alpha+\delta} e^{-z}\d z \\
& = \int_1^\infty \summ j1{j_n}\frac{j^{\beta r} (j-z)e^{-z}z^{j-1-\alpha+\delta}}{j!}\d z + \frac{(j_n+1)^{\beta r}}{j_n!}\int_1^\infty z^{j_n-\alpha+\delta}e^{-z}\d z\\
& \le \sif j1\frac{j^{\beta r}}{j!}(\alpha-\delta)\Gamma(j-\alpha+\delta) + \frac{(j_n+1)^{\beta r}\Gamma(j_n+1-\alpha+\delta)}{\Gamma(j_n+1)},
\end{align*}
which is finite since $\beta r+\delta<\alpha$. (Note that in the last step above for the first term, we first exchanged the integral and summation, next bounded $\summ j1{j_n}\int_1^\infty$ by $\sif j1\int_0^\infty$, and then applied $j\Gamma(j-\alpha+\delta)-\Gamma(j+1-\alpha+\delta) = (\alpha-\delta)\Gamma(j-\alpha+\delta)$; if we first extend the summation and integral and then exchange the order, then Fubini's theorem does not apply.)
    \end{proof}
\begin{remark}
The discussions around \eqref{eq:Rosenthal} and \eqref{eq:p} are under the assumption $\beta<\alpha^2/2$, which is needed to establish the moment inequality on the increments in \eqref{eq:increment moment}. Another method for establishing tightness is found in \citep{chebunin16functional}, but their approach also requires the moment estimate \eqref{eq:increment moment}.
\end{remark}
\begin{lemma}\label{lem:Olivier}
Let $\vv G_n = (G_n(t))_{t\in[0,1]}, n\in\N$ be a sequence of stochastic processes defined on a common probability space with a standard Poisson process $(N(t))_{t\ge 0}$, satisfying the following two conditions.
\begin{enumerate}[(i)]
\item For some $p>1/\alpha$ we have
\equh\label{eq:3.5}
\esp \abs{G_n(t)-G_n(s)}^{2p}\le C\pp{|t-s|^{\alpha p}\sigma_n^{2p} + |t-s|^\alpha \sigma_n^2}, \quad \mfa s,t\in[0,1], n\in\N.
\eque
\item There exists a constant $C_0>0$ such that almost surely,
\equh\label{eq:3.6}
\abs{G_n(t)-G_n(s)}\le C_0\pp{N(nt)-N(ns)+n(t-s)}, \quad\mfa 0\le s\le t\le1.
\eque
\end{enumerate}
Then,
\[
\lim_{\delta\downarrow 0}\limsupn\proba\pp{\sup_{\substack{s,t\in[0,1]\\
|t-s|\le \delta}}|G_n(t)-G_n(s)|\ge 9\eta\sigma_n}   = 0 \mfa \eta>0.
\]
\end{lemma}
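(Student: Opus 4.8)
The plan is to follow the classical Billingsley-type tightness criterion for $D([0,1])$, adapted to a whole sequence of processes via a chaining argument over dyadic points, just as in \citep[Lemma 3.6]{durieu16infinite}. Fix $\eta>0$. Since the moment bound \eqref{eq:3.5} controls increments in an $L^{2p}$-sense with $p>\alpha$, it is natural to work on the dyadic grid $t_{i}^{(m)} = i2^{-m}$ and to estimate $\max_i |G_n(t_{i+1}^{(m)})-G_n(t_i^{(m)})|$ by a union bound and Markov's inequality, summing over $m$ to pass from grid increments to the modulus of continuity. The slight twist relative to \citep{durieu16infinite} is that the right-hand side of \eqref{eq:3.5} is not simply $|t-s|^{\alpha p}\sigma_n^{2p}$: there is an additional lower-order term $|t-s|^\alpha \sigma_n^2$. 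I would split the analysis of the dyadic increments into two regimes, $2^{-m}\ge \sigma_n^{-2}$ (roughly, "macroscopic" scales where $|t-s|^{\alpha p}\sigma_n^{2p}$ dominates) and $2^{-m}< \sigma_n^{-2}$ (the "microscopic" scales), exactly the split that governs the non-diffusive scaling of the Karlin model.

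For the macroscopic scales, the term $|t-s|^{\alpha p}\sigma_n^{2p}$ in \eqref{eq:3.5}, together with $p>\alpha$, gives a summable-in-$m$ bound after the union bound: the number of dyadic intervals is $2^m$, each contributing $\sim (2^{-m})^{\alpha p}\sigma_n^{2p}/(\eta\sigma_n)^{2p}$, and $\sum_m 2^m 2^{-m\alpha p} = \sum_m 2^{-m(\alpha p - 1)}$ converges since $\alpha p>1$. This is the part analogous to the estimate \eqref{eq:3.13} in \citep{durieu16infinite}; I'd make precise here the point the authors flag as a "small gap", namely that when summing only over $m$ with $2^{-m}\ge \delta$ (or a comparable cutoff) one must also control the contribution of the $|t-s|^\alpha\sigma_n^2$ term, which after the same union bound gives $\sum_m 2^m (2^{-m})^{\alpha}\sigma_n^2/(\eta\sigma_n)^{2p}$; this is finite provided we are at scales coarser than $\sigma_n^{-2}$, and its total is $O(\sigma_n^{2-2p}\cdot \sigma_n^{2p(1-\alpha/?)})$-type and vanishes or stays bounded appropriately as $\delta\downarrow 0$ uniformly in $n$. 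Once the dyadic-increment maxima are under control, a standard chaining bound converts them into a bound on $\sup_{|t-s|\le\delta}|G_n(t)-G_n(s)|$ up to the factor $9$ in the statement (this $9$ is exactly the bookkeeping constant from splitting an arbitrary increment into at most a bounded number of dyadic pieces plus two "boundary" pieces).

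For the microscopic scales $2^{-m}<\sigma_n^{-2}$, the moment bound is too weak to be useful after a union bound, and this is where condition \eqref{eq:3.6} enters: on intervals of length $\le \sigma_n^{-2} \sim n^{-\alpha}\ll n^{-1}\cdot(\text{something})$—more precisely on any interval of length $\le\delta$ the increment of $G_n$ is dominated by $C_0(N(nt)-N(ns)+n(t-s))$, and since $N$ is a single fixed Poisson process, the modulus of continuity of $t\mapsto N(nt)+nt$ over windows of size $\delta$ is, after dividing by $\sigma_n = v(n)^{1/2}$, negligible: $N(n\delta)\approx n\delta$ and $n\delta/\sigma_n \to 0$ is not automatic, so one instead argues that on the event that no two of the finitely many dyadic points at the cutoff scale are separated by a jump of $N$ of size exceeding $\eta\sigma_n/C_0 - n\delta$, all sub-cutoff increments are $\le 9\eta\sigma_n$; this event has probability tending to $1$ because $\sigma_n\to\infty$ faster than the relevant Poisson fluctuations at that scale. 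I'd phrase this as: outside an event of probability $o(1)$, $\sup_{|t-s|\le 2^{-m_n}}|G_n(t)-G_n(s)|\le 9\eta\sigma_n$ where $2^{-m_n}\asymp \sigma_n^{-2}$, using \eqref{eq:3.6} and a crude large-deviation estimate for increments of $N$.

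The main obstacle is the bookkeeping at the interface between the two regimes: one must choose the cutoff scale $m_n$ (of order $\log_2 \sigma_n^2 \sim \alpha\log_2 n$) so that the macroscopic chaining sum $\sum_{m\le m_n} 2^{-m(\alpha p-1)}\cdot(\text{stuff})$ and the microscopic Poisson-increment estimate both go through, and so that the extra $|t-s|^\alpha\sigma_n^2$ term in \eqref{eq:3.5} does not spoil summability — this is precisely the "small gap" the authors mention, and handling it cleanly requires keeping the two terms of \eqref{eq:3.5} separate throughout the union bound rather than crudely bounding one by the other. Everything else (Markov, union bounds, the chaining identity giving the factor $9$, and the final $\lim_{\delta\downarrow 0}\limsup_n$) is routine once the scales are chosen correctly.
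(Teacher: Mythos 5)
Your proposal reproduces the overall strategy of the paper's proof: chain down dyadic increments using the moment bound \eqref{eq:3.5} with a union bound and Markov's inequality, stop at a data-dependent cutoff scale, and below the cutoff switch to the pathwise bound \eqref{eq:3.6} plus a Poisson tail estimate. The use of $\alpha p>1$ for summability, the role of the $|t-s|^\alpha\sigma_n^2$ term in forcing a cutoff, and the bookkeeping nature of the constant $9$ are all correctly identified. However, the cutoff scale you choose is wrong, and this is not a bookkeeping issue but a genuine gap. You set $2^{-m_n}\asymp\sigma_n^{-2}$. On a sub-cutoff interval of length $\ell\le 2^{-m_n}$, the deterministic part of \eqref{eq:3.6} contributes $C_0 n\ell\le C_0n\sigma_n^{-2}\sim n^{1-\alpha}$, which for $\alpha<2/3$ dwarfs the threshold $\eta\sigma_n\sim\eta n^{\alpha/2}$. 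So on those intervals \eqref{eq:3.6} gives no useful control, and your proposed good event --- ``no jump of $N$ exceeding $\eta\sigma_n/C_0-n\delta$'' (where $n\delta$ should in any case be $n2^{-m_n}$) --- is empty for large $n$ in that range. The cutoff must instead be fine enough that the deterministic drift is already below threshold: $2^{-m_n}\lesssim\sigma_n/n$, i.e.\ $2^{m_n}\asymp n\delta/\sigma_n$, which is exactly the paper's choice of $k_n$ in \eqref{eq:k_n}. With that finer cutoff one must re-check the macroscopic chaining sum for the second term of \eqref{eq:3.5}: it is dominated by $2^{k_n(1-\alpha)}\sigma_n^{2-2p}\asymp n^{\,1-\alpha/2+\alpha^2/2-\alpha p}\,\delta^{1-\alpha}$, which still vanishes since $\alpha p>1>1-\alpha/2+\alpha^2/2$. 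Your proposal never verifies this compatibility, and as written the microscopic step fails for $\alpha\le 2/3$.

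A minor point: you misread the ``small gap'' the authors flag in \citep[Lemma 3.6]{durieu16infinite}. It is not about the extra $|t-s|^\alpha\sigma_n^2$ term in the moment bound, but about the boundary case $s=t_i$ in the chaining decomposition, which the paper handles by extending the dyadic maximum to include $k=0$.
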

This lemma is an adaption of \citep[Lemma 3.6]{durieu16infinite}. Therein, the lemma assumes a single process $G(t)$ and our lemma here is a generalization in the sense that if the process $G_n(t)$ takes the form $G_n(t)=G(nt)$, then our Lemma \ref{lem:Olivier} becomes \citep[Lemma 3.6]{durieu16infinite}. However, here we shall need $G_n(t) = \what W_n(t)$, and hence we need to slightly modify the tightness argument therein. 
\begin{proof}
Fix $\eta>0$ and $\delta\in(0,1)$. Set $r:=\floor{1/\delta}+1$, and $t_i :=i\delta, i=0,\dots,r-1$ and $t_r:=1$. By \citep[Theorem 7.4]{billingsley99convergence}, we have
\equh\label{eq:3.8}
\proba\pp{\sup_{\substack{s,t\in[0,1]\\
|t-s|\le \delta}}|G_n(t)-G_n(s)|\ge 9\eta\sigma_n} \le \summ i1r \proba\pp{\sup_{s\in[t_{i-1},t_i]}|G_n(s)-G_n(t_{i-1})|\ge 3\eta \sigma_n}.
\eque
Fix $i\in\{1,\dots,r\}$ and we analyze the probability on the right-hand side above. Set 
\[
x_{k,\ell}\equiv x_{k,\ell}\topp i:=t_{i-1}+\ell\frac\delta{2^k}, \quad k\in\N_0, \ell=0,\dots,2^k.
\]
For each $s\in[t_{i-1},t_i]$ set 
\[
s_k:=\max \ccbb{x_{k,\ell}: \ell=0,\dots,2^k, x_{k,\ell}\le s},\quad k\in\N,
\]
and $s_0:=t_{i-1}$. That is, we have constructed a non-decreasing sequence: $t_{i-1} = s_0\le s_1\le\cdots\le s$.
We also choose 
\equh\label{eq:k_n}
k_n:=\floor{\log_2\pp{2(e-1)\frac{C_0n\delta}{\eta\sigma_n}}}+1,
\eque
and by triangle inequality we have
\[
\abs{G_n(s)-G_n(t_{i-1})} \le \summ k1{k_n}|G_n(s_k)-G_n(s_{k-1})| + \abs{G_n(s) - G_n(s_{k_n})}.
\]
Now, we derive a uniform upper bound for the difference above {\em for all} $s\in[t_{i-1},t_i]$. For the summation on the right-hand side above, we have
\[
 \summ k1{k_n}|G_n(s_k)-G_n(s_{k-1})|\le \sum_{k=0}^{k_n}\max_{\ell=1,\dots,2^k}|G_n(x_{k,\ell})-G_n(x_{k,\ell-1})|.
\]
This is a compact way of combining two cases: when $s<t_i$ the summation on the right-hand side can be replaced by $\summ k1{k_n}(\cdots)$
(this follows from the observation that $s_k- s_{k-1} = 0$ or $\delta/2^k$ for $k\ge 1$), and when $s=t_i$ the left-hand side is equal to $|G_n(x_{0,0}) - G_n(x_{0,1})|$ corresponding to $k=0$. We also have
\begin{align}
\abs{G_n(s)-G_n(s_{k_n})}&\le \max_{\ell=0,\dots,2^{k_n}-1}\sup_{s\in[x_{k_n,\ell},x_{k_n,\ell+1}]}C_0\pp{N(ns) - N(nx_{k_n,\ell}) + n(s-x_{k_n,\ell})}\nonumber\\
&\le \max_{\ell=0,\dots,2^{k_n}-1}C_0\pp{N\pp{n(x_{k_n,\ell}+\delta 2^{-k_n})} - N(nx_{k_n,\ell}) + n\delta 2^{-k_n}}\nonumber\\
& \le \max_{\ell=0,\dots,2^{k_n}-1}C_0\pp{N\pp{n(x_{k_n,\ell}+\delta 2^{-k_n})} - N(nx_{k_n,\ell})} + \eta\sigma_n,\nonumber
\end{align}
where in the first inequality we used the assumption \eqref{eq:3.6}, and in the third we recalled the choice of $k_n$ in \eqref{eq:k_n} which implies $C_0 n\delta 2^{-k_n}<\eta \sigma_n$. We then arrive at
\begin{align}
\limsupn \proba &\pp{\sup_{s\in[t_{i-1},t_i]}|G_n(s)-G_n(t_{i-1})|\ge 3\eta\sigma_n} \nonumber\\
& \le \limsupn \proba\pp{\summ k0{k_n}\max_{\ell=1,\dots,2^k}|G_n(x_{k,\ell})-G_n(x_{k,\ell-1})|>\eta\sigma_n}\label{eq:3.13}\\
& \quad + \limsupn \proba\pp{\max_{\ell=0,\dots,2^{k_n}-1}C_0\pp{N(n(x_{k_n,\ell}+\delta 2^{-k_n})) - N(nx_{k_n,\ell})}>\eta\sigma_n}.\label{eq:3.14}
\end{align}
(In \citep[Proof of Lemma 3.6]{durieu16infinite}, the analysis of case where $s=t_i$ was missing, and it can be dealt with as shown here.)
The expression of  \eqref{eq:3.14} is zero. Indeed, we have
\begin{align*}
\proba\pp{\max_{\ell=0,\dots,2^{k_n}-1}C_0\pp{N(n(x_{k_n,\ell}+\delta 2^{-k_n})) - N(nx_{k_n,\ell})}>\eta\sigma_n} & \le 2^{k_n}\proba\pp{N(n\delta 2^{-k_n})>\frac{\eta \sigma_n}{C_0}}\\
& \le 2^{k_n}e^{n\delta 2^{-k_n}(e-1)-\eta\sigma_n/C_0} \\
&\le \frac{4C_0(e-1)n\delta}{\eta\sigma_n}e^{-C'\eta \sigma_n},
\end{align*}
with $C' = (1-1/(2(e-1)))/C_0$. 
In the second inequality above we used Markov's inequality, and in the third we used
\[
2^{k_n}\le \frac{4C_0(e-1)n\delta}{\eta\sigma_n} \qmand 2^{-k_n}\le \frac{\eta \sigma_n}{2C_0(e-1)n\delta},
\]
which follows from the choice of $k_n$ in \eqref{eq:k_n}. 

We obtain an upper bound for the expression  \eqref{eq:3.13}. Setting $\eta_k = \eta/((k+1)(k+2)), k\in\N_0$, we have
\begin{align*}
\proba &\pp{\summ k0{k_n}\max_{\ell=1,\dots,2^k}|G_n(x_{k,\ell})-G_n(x_{k,\ell-1})|>\eta\sigma_n}\\
& \le \summ k0{k_n} \proba\pp{\max_{\ell=1,\dots,2^k}|G_n(x_{k,\ell})-G_n(x_{k,\ell-1})|>\eta_k\sigma_n}\\
& \le \summ k0{k_n}\summ \ell1{2^k}\proba\pp{\abs{G_n(x_{k,\ell})-G_n(x_{k,\ell-1})}>\eta_k\sigma_n}.
\end{align*}
By moment estimates in \eqref{eq:3.5}, the last double summation is bounded by 

\begin{align*}
\summ k0{k_n}\summ \ell1{2^k} & \eta_k^{-2p}\frac{\esp|G_n(x_{k,\ell})-G_n(x_{k,\ell-1})|^{2p}}{\sigma_n^{2p}}\\
& \le C\summ k0{k_n}\summ \ell1{2^k}\eta_k^{-2p}\pp{|x_{k,\ell}-x_{k,\ell-1}|^{\alpha p} + \frac{|x_{k,\ell}-x_{k,\ell-1}|^{\alpha}}{\sigma_n^{2(p-1)}}}\\
& \le C \delta^{\alpha p}\sif k0 \eta_k^{-2p}2^{k(1-\alpha p)} + C \delta ^\alpha \sigma_n^{-2(p-1)} \summ k0{k_n}\eta_k^{-2p}2^{k(1-\alpha)}.
\end{align*}
In the last expression, the first term is $C\delta^{\alpha p}$ (since the series is finite), and the second is bounded by 
\[
C\delta^\alpha\sigma_n^{-2(p-1)}2^{k_n(1-\alpha)}\le 
C\pp{\frac{n}{\sigma_n^{2(p-1)/(1-\alpha)+1}}}^{1-\alpha} \le C\pp{\frac n{\sigma_n^{2/\alpha+1}}}^{1-\alpha}\to 0
\]
as $n\to\infty$, where in the second inequality we used the assumption $\alpha p>1$ (which implies $2(p-1)/(1-\alpha)>2/\alpha$).
It then follows that 
\equh\label{eq:3.13'}
\limsupn \proba \pp{\summ k0{k_n}\max_{\ell=1,\dots,2^k}|G_n(x_{k,\ell})-G_n(x_{k,\ell-1})|>\eta\sigma_n}\le C \delta ^{\alpha p}.
\eque
Combining \eqref{eq:3.8}, \eqref{eq:3.13}, \eqref{eq:3.14} and \eqref{eq:3.13'}, we have obtained 
\[
\limsupn\proba\pp{\sup_{\substack{s,t\in[0,1]\\
|t-s|\le \delta}}|G_n(t)-G_n(s)|\ge 9\eta\sigma_n} \le C\pp{\floor{\delta\inv}+1}\delta^{\alpha p}. 
\]
Again since $\alpha p>1$, this completes the proof.
\end{proof}

It remains to prove the tightness of $\sigma_n\inv(\wt W_n-\what W_n)$. 
\begin{proposition}With $\beta<\alpha/2$,
\label{prop:tight2}
\equh\label{eq:remainder wt what}
\sup_{t\in[0,1]}\frac{|\wt W_n(t) - \what W_n(t)|}{\sigma_n} \to 0 \quad\mbox{ in probability as $n\to\infty$.}
\eque
\end{proposition}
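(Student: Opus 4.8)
The plan is to reduce the whole statement to a single elementary moment bound, using crucially that the truncation in $\what W_n$ does not involve $t$. Write $R_n(t):=\wt W_n(t)-\what W_n(t)$. Since $\what a_{n,\ell}(t)=a_{N_\ell(nt)}\inddd{N_\ell(n)\le j_n}$, subtracting the two defining series gives
\[
R_n(t)=\sum_{\ell\,:\,N_\ell(n)>j_n}a_{N_\ell(nt)}-\sif\ell1\esp\bb{a_{N_\ell(nt)}\inddd{N_\ell(n)>j_n}},\qquad t\in[0,1],
\]
where the event $\{N_\ell(n)>j_n\}$ is $t$-free and the first sum is a.s.\ finite. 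Because $t\le1$ forces $N_\ell(nt)\le N_\ell(n)$ pathwise and $|a_j|\le Cj^\beta$ (which follows from $a_0=0$ and the standing increment hypothesis), one has $\sup_{t\in[0,1]}|a_{N_\ell(nt)}|\le CN_\ell(n)^\beta$, hence
\[
\sup_{t\in[0,1]}|R_n(t)|\le C\sum_{\ell\,:\,N_\ell(n)>j_n}N_\ell(n)^\beta+C\sif\ell1\esp\bb{N_\ell(n)^\beta\inddd{N_\ell(n)>j_n}}.
\]
Thus it suffices to show $\Lambda_n:=\sif\ell1\esp\bb{N_\ell(n)^\beta\inddd{N_\ell(n)>j_n}}=o(\sigma_n)$: indeed the first sum has expectation exactly $\Lambda_n$, so $\esp\sup_{t\in[0,1]}|R_n(t)|\le 2C\Lambda_n$, and Markov's inequality gives $\proba\pp{\sup_{t\in[0,1]}|R_n(t)|>\eta\sigma_n}\le 2C\Lambda_n/(\eta\sigma_n)\to0$.

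To estimate $\Lambda_n$, I would observe that on $\{N_\ell(n)>j_n\}$ we have $N_\ell(n)^\beta\le N_\ell(n)^{1+\beta}/j_n$, so $\Lambda_n\le j_n\inv\sif\ell1\esp N_\ell(n)^{1+\beta}$. Since $N_\ell(n)$ is Poisson with mean $np_\ell$, a routine Poisson moment bound --- splitting $np_\ell\le1$, where $\esp N_\ell(n)^{1+\beta}\le C_\beta\, np_\ell$ by a direct series estimate, and $np_\ell>1$, where $\esp N_\ell(n)^{1+\beta}\le(\esp N_\ell(n)^2)^{(1+\beta)/2}\le C_\beta(np_\ell)^{1+\beta}$ since $(1+\beta)/2\le 1$ --- gives $\esp N_\ell(n)^{1+\beta}\le C_\beta\bb{np_\ell+(np_\ell)^{1+\beta}}$. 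Summing over $\ell$ and using $\sif\ell1 p_\ell=1$ together with $p_\ell^{1+\beta}\le p_\ell$ yields $\sif\ell1\esp N_\ell(n)^{1+\beta}\le Cn^{1+\beta}$, so with $j_n=\floor{n/\log n}$ we obtain $\Lambda_n\le Cn^\beta\log n$. Since $\sigma_n\sim\mathsf C_0^{\alpha/2}n^{\alpha/2}$ and $\beta<\alpha/2$, this gives $\Lambda_n/\sigma_n\le Cn^{\beta-\alpha/2}\log n\to0$, completing the proof of \eqref{eq:remainder wt what}.

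I do not anticipate a real obstacle; the only point needing care is that, unlike a typical urn, the first $O((\log n)^\alpha)$ urns satisfy $N_\ell(n)\asymp n\gg j_n$, so the truncated sum $\sum_{\ell:N_\ell(n)>j_n}a_{N_\ell(nt)}$ is not negligible term by term. What rescues it is that there are at most $\asymp N(n)/j_n\asymp\log n$ urns with more than $j_n$ balls, and this is precisely what the elementary bound $N_\ell(n)^\beta\le N_\ell(n)^{1+\beta}/j_n$ encodes after summing. This is also where the truncation level $j_n=\floor{n/\log n}$ matters: it must be large enough to make $n^{1+\beta}/j_n=o(\sigma_n)$, which holds with room to spare whenever $\beta<\alpha/2$, and it is the same truncation already exploited through Lemma~\ref{lem:moments what a} in the proof of tightness of $\what W_n$.
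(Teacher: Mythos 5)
Your argument is correct, and it takes a genuinely different route from the paper's, so let me compare. Both proofs start from the same observation that the truncation event $\{N_\ell(n)>j_n\}$ is $t$-free, so the pathwise bound $\sup_{t\in[0,1]}|a_{N_\ell(nt)}\inddd{N_\ell(n)>j_n}|\le CN_\ell(n)^\beta\inddd{N_\ell(n)>j_n}$ holds. The paper then controls the random term by introducing the high-probability event $\Omega_{n,K}$ (at most $K\log n$ urns exceed $j_n$ balls, and no urn exceeds $Kn$ balls), proving $\proba(\Omega_{n,K})\to1$ via Markov on $N(n)$, and deducing a pathwise bound $C(Kn)^\beta K\log n$ on that event; for the deterministic term it uses $\inddd{N_\ell(n)>j_n}\le(N_\ell(n)/j_n)^{\alpha+\delta-\beta}$ followed by the concave Jensen bound $\esp N_\ell(n)^{\alpha+\delta}\le(\esp N_\ell(n))^{\alpha+\delta}$ with a free parameter $\delta\in(0,1-\alpha)$. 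You instead bound everything in $L^1$: $\esp\sup_t|R_n(t)|\le 2C\Lambda_n$ with $\Lambda_n=\sum_\ell\esp[N_\ell(n)^\beta\inddd{N_\ell(n)>j_n}]$, and then apply Markov directly, which dispenses with $\Omega_{n,K}$ altogether. Your bound $N_\ell(n)^\beta\inddd{N_\ell(n)>j_n}\le N_\ell(n)^{1+\beta}/j_n$ is also cleaner than the paper's (no auxiliary $\delta$), at the mild cost of needing a moment of order $1+\beta>1$, which you correctly handle by splitting $np_\ell\lessgtr 1$ (the concavity step $\esp N^{1+\beta}\le(\esp N^2)^{(1+\beta)/2}$ indeed needs $\beta\le 1$, which holds comfortably). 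The two routes give essentially the same quality of estimate: your $\Lambda_n=O(n^\beta\log n)$ versus the paper's $O(n^\beta\log^{\alpha+\delta-\beta}n)$, both $o(n^{\alpha/2})$ once $\beta<\alpha/2$. Your version is arguably the more economical one; the paper's $\Omega_{n,K}$ device would be preferable if one later needed an almost-sure or pathwise statement rather than convergence in probability, but for the statement as posed the $L^1$-plus-Markov argument suffices.
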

\begin{proof}
To see this, pick $K>2$ and set 
\[
\Omega_{n,K}:=\ccbb{\sif \ell1\inddd{N_\ell(n)>j_n}\le K\log n \qmand \sup_{\ell\in\N}N_\ell(n)\le Kn}.
\]
Note that, 
\equh\label{eq:Omega_n,K}
\limn\proba(\Omega_{n,K}) = 1.
\eque
Indeed, recalling that $N(n) = \sif\ell 1N_\ell(n)$ is a  Poisson random variable with parameter $n$, we have
\begin{align*}
\proba\pp{\Omega_{n,K}^c} & \le\proba\pp{\sif\ell1 \inddd{N_\ell(n)>j_n}\ge K\log n} + \proba\pp{\sup_{\ell\in\N}N_\ell(n)>Kn}\\
& \le \proba\pp{\sif \ell1 N_\ell(n)>  j_nK \log n} + \proba\pp{\sif \ell1 N_\ell(n)> Kn}\\
& \le  \proba\pp{N(n)>\frac K2n}+\proba\pp{N(n)>Kn} \to 0
\end{align*}
as $n\to\infty$.

Therefore, thanks to \eqref{eq:Omega_n,K} it suffices to prove
\[
\limn\proba\pp{\sup_{t\in[0,1]}\frac{|\wt W_n(t) - \what W_n(t)|}{\sigma_n}\ind_{\Omega_{n,K}} >\epsilon} =   0.
\]Observe that
\begin{align*}
& \sup_{t\in[0,1]}
\abs{{\wt W_n(t) - \what W_n(t)}}\ind_{\Omega_{n,K}}\\
& \quad = \sup_{t\in[0,1]} 
\abs{\summ \ell1{\infty}\pp{a_{N_\ell(nt)}\inddd{N_\ell(n)>j_n} - \esp \pp{a_{N_\ell(nt)}\inddd{N_\ell(n)>j_n}}}}\ind_{\Omega_{n,K}}\\
& \quad \le \sup_{t\in[0,1]}
 \summ \ell1{\infty}\abs{a_{N_\ell(nt)}\inddd{N_\ell(n)>j_n}}\ind_{\Omega_{n,K}}+ \sup_{t\in[0,1]}
  \summ \ell1{\infty}\esp \abs{a_{N_\ell(nt)}\inddd{N_\ell(n)>j_n}}.
\end{align*}
For the first supremum, we have
\begin{align*}
\sup_{t\in[0,1]}
 \summ \ell1{\infty}\abs{a_{N_\ell(nt)}\inddd{N_\ell(nt)>j_n}}\ind_{\Omega_{n,K}} 
 & \le C\summ \ell1{\infty}N_\ell^\beta(n)\inddd{N_\ell(n)>j_n}\ind_{\Omega_{n,K}} \\
 & \le C \sup_{\ell\in\N}N_\ell^\beta(n) \summ \ell 1{\infty}\inddd{N_\ell(n)>j_n}\ind_{\Omega_{n,K}}\le C(Kn)^\beta K\log n.
\end{align*} 
For the second supremum, we have, for $\delta\in(0,1-\alpha)$, 
\begin{align*}
\sup_{t\in[0,1]}\summ \ell1{\infty}\esp \abs{a_{N_\ell(nt)}\inddd{N_\ell(n)>j_n}}&\le C \sup_{t\in[0,1]}\summ \ell1{\infty}\esp \pp{N_\ell^\beta(nt)\inddd{N_\ell(n)>j_n}} \\
&= C\sif \ell1\esp\pp{N_\ell^\beta\inddd{N_\ell(n)>j_n}} \le C\pp{\frac{\log n}n}^{\alpha+\delta-\beta}\summ \ell1{\infty}\esp N_\ell^{\alpha+\delta}(n) \\
&\le C\pp{\frac{\log n}n}^{\alpha+\delta-\beta}\summ \ell1{\infty}(\esp N_\ell(n))^{\alpha+\delta} 
\le Cn^\beta \log ^{\alpha+\delta-\beta} n,
\end{align*} 
where in the second inequality we applied $\esp (N_\ell^\beta\inddd{N_\ell(n)>j_n}) \le \esp (N_\ell^\beta (N_\ell(n)/j_n)^{\alpha+\delta-\beta})$ (notice $\alpha+\delta-\beta>0$).
The stated result \eqref{eq:remainder wt what} now follows.
\end{proof}
\section{De-Poissonization}\label{sec:dePoissonization}
Recall that $W_n(t)$ in \eqref{eq:W_n} and $\wt W_n(t)$ in \eqref{eq:wt W_n} can be re-written as follows
\begin{align*}
W_n(t) &= \sif\ell1 \pp{a_{K_{\floor{nt},\ell}} - \esp a_{K_{\floor{nt},\ell}}},\\
\wt W_n(t) & = \sif \ell1\pp{a_{N_\ell(nt)} - \esp a_{N_\ell(nt)}}.
\end{align*}
The goal is to prove the convergence of $\sigma_n\inv(W_n(t))_{t\in[0,1]}$ and we have just proved the convergence of its Poissonized version $\sigma_n^{-1}\wt W_n$ in Theorem \ref{thm:3}. In this section we complete the proof of Theorem \ref{thm:2} by showing that the difference of the two processes is negligible in an appropriate sense. First, the two processes can be coupled as follows. Let $(\tau_k)_{k\in\N}$ denote the consecutive arrival times of the Poisson process $(N(t))_{t\ge 0}$ with $N(t) := \sif \ell1N_\ell(t)$ and $\tau_0 = 0$. Set
\[
\lambda_n(t) :=\frac{\tau_{\floor{nt}}}n, t\in[0,1].
\]
Then, $\spp{N_\ell(n\lambda_n(t))}_{t\ge 0,\ell\in\N}\eqd \spp{K_{\floor{nt},\ell}}_{t\ge 0,\ell\in\N}$. Without loss of generality, we assume further the equality is almost surely in this section. 
It follows that 
$\sif\ell1 a_{K_{\floor{nt},\ell}} = \sif\ell1a_{N_\ell(n\lambda_n(t))}$ almost surely.
Therefore, with 
\[
\wt q_\ell(t) :=\esp a_{N_\ell(t)}, t\ge 0, \qmand  q_\ell(n):=\esp a_{K_{n,\ell}}, n\in\N,
\]
we have
\begin{align}
\wt W_n(\lambda_n(t)) & = \sif\ell1\pp{a_{N_\ell(n\lambda_n(t))} - \wt q_\ell(n\lambda_n(t))} = \sif\ell1\pp{a_{K_{\floor{nt},\ell}} - \wt q_\ell(n\lambda_n(t))}\nonumber\\
& = W_n(t) + \sif\ell1\pp{q_\ell(\floor{nt}) - \wt q_\ell(n\lambda_n(t))}.\label{eq:Poissonization}
\end{align}
It is well-known that 
\[
\limn\sup_{t\in[0,1]}\abs{\lambda_n(t) - t} = 0, \mbox{ almost surely,}
\] 
and hence by standard de-Poissonization argument, see for example \citep{billingsley99convergence} and also \citep{durieu16infinite}, it follows that
\[
\frac1{\sigma_n}\pp{\wt W_n(\lambda_n(t))}_{t\in[0,1]} \qmand 
\frac1{\sigma_n}\pp{\wt W_n(t)}_{t\in[0,1]}
\]
have the same limit in $D([0,1])$, and we have proved the convergence of the latter to the desired limit in the previous section. 
Then, in view of \eqref{eq:Poissonization}, to complete the proof of Theorem \ref{thm:2} it remains to establish the following. Again, the regular variation assumption \eqref{eq:p RV} on $(p_j)_{j\in\N}$ is imposed throughout without further mention. 
\begin{proposition}\label{prop:approx}
Assume $|a_{i+j}-a_i|\le Cj^\beta$ for all $i,j\in\N$ with $\beta\in[0,\alpha/2)$. We have
\[
\sup_{t\in[0,1]}\frac1{\sigma_n} \abs{\sif\ell1 \pp{\wt q_\ell(n\lambda_n(t)) - q_\ell(\floor{nt})}}\to 0 \mbox{ in probability as $n\to\infty$.}
\]
\end{proposition}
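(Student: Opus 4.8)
The plan is to reduce the statement to a deterministic comparison between the Poissonised mean function and the exact mean function, evaluated respectively at the random Poisson clock $\tau_m$ and at the deterministic index $m$. Write $m=\floor{nt}$, so $n\lambda_n(t)=\tau_m$, and set the Poissonised mean $f(s):=\sif\ell1\wt q_\ell(s)=\sif\ell1\esp a_{N_\ell(s)}$ and the exact mean $g(m):=\sif\ell1 q_\ell(m)=\sif\ell1\esp a_{K_{m,\ell}}$. Then the quantity to be controlled equals $f(\tau_m)-g(m)$, which depends on $t$ only through $m$, so $\sup_{t\in[0,1]}|\cdots|=\max_{0\le m\le n}|f(\tau_m)-g(m)|$. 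I would split
$f(\tau_m)-g(m)=\bigl(f(\tau_m)-f(m)\bigr)+\bigl(f(m)-g(m)\bigr)$
and treat the random oscillation term and the deterministic Poissonisation-bias term separately.

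For the oscillation term I would first record two a priori estimates. The bound $|f(s)|\le Cs^\alpha$ ($s\ge1$) follows from the ``consequence'' clause of Lemma~\ref{TechEst} applied with $b_j=|a_j|\le Cj^\beta$ (writing $s=n't'$ with $n'=\ceil s$). The key new estimate is decay of $f'$: differentiating the Poisson expectation gives $f'(s)=\sif\ell1 p_\ell\,\esp[a_{N_\ell(s)+1}-a_{N_\ell(s)}]$, and summation by parts (using $a_0=0$) rewrites each summand as $\esp\bigl[a_{N_\ell(s)+1}\tfrac{N_\ell(s)+1-p_\ell s}{N_\ell(s)+1}\bigr]$; Cauchy--Schwarz together with $|a_{N+1}|\le C(N+1)^\beta$ and elementary Poisson moment bounds then yields $\bigl|\esp[a_{N_\ell(s)+1}-a_{N_\ell(s)}]\bigr|\le C\bigl(1\wedge(p_\ell s)^{\beta-1/2}\bigr)$. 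Summing over $\ell$ via $p_\ell\asymp\ell^{-1/\alpha}$ (splitting at $p_\ell s\asymp1$) gives $|f'(s)|\le Cs^{\theta}$, up to a logarithmic factor, with $\theta:=\max\{\beta-\tfrac12,\ \alpha-1\}$; here the hypotheses $\beta<\alpha/2$ and $\alpha<1$ are exactly what forces $\theta<\tfrac{\alpha-1}2<0$. I would then invoke uniform concentration of the Poisson clock: since $(\tau_m-m)_m$ is a centred random walk with unit-variance sub-exponential increments, a maximal inequality over dyadic blocks gives $\P\bigl(\exists\,1\le m\le n:\ |\tau_m-m|>\sqrt m\,\log n\bigr)\to0$. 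On that event, for $m\ge4\log^2 n$ the segment $[m\wedge\tau_m,\ m\vee\tau_m]$ lies in $[m/2,2m]$, so the mean value theorem and the $f'$-bound give $|f(\tau_m)-f(m)|\le Cm^{\theta}\sqrt m\,\log n$; for $m<4\log^2 n$ one uses $|f(\tau_m)-f(m)|\le|f(\tau_m)|+|f(m)|\le C(\log^2 n)^{\alpha}$ directly. Dividing by $\sigma_n\asymp n^{\alpha/2}$ and using $\theta+\tfrac12<\tfrac\alpha2$, both bounds tend to $0$ uniformly in $m\le n$.

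For the bias term, $f(m)-g(m)=\sif\ell1\bigl(\esp a_{N_\ell(m)}-\esp a_{K_{m,\ell}}\bigr)$ is a sum of discrepancies between $\mathrm{Poisson}(p_\ell m)$- and $\mathrm{Binomial}(m,p_\ell)$-expectations of $a$. When $a$ is bounded ($\beta=0$) the crude estimate $d_{\mathrm{TV}}(\mathrm{Bin}(m,p),\mathrm{Poi}(mp))\le p$ already gives $|f(m)-g(m)|\le C\sif\ell1 p_\ell=C=o(\sigma_n)$. For $\beta>0$ the weights are unbounded, and one must control $\sum_j a_j\bigl(\P(N_\ell(m)=j)-\P(K_{m,\ell}=j)\bigr)$ more carefully: via local central limit theorems for the two laws, on the bulk $|j-p_\ell m|\lesssim\sqrt{p_\ell m}$ the two point masses agree up to a relative factor $O(p_\ell)$, and combining this with $|a_j|\le Cj^\beta$, truncating the weights over that window, controlling the large-deviation tails, and summing over $\ell$ gives a bound of the form $|f(m)-g(m)|\le C\bigl(m^{\alpha-1}+m^{\alpha-1/2}+m^\beta\bigr)$; divided by $\sigma_n$ this tends to $0$ uniformly in $m\le n$ because $\beta<\alpha/2<1-\alpha/2$. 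Combining the oscillation and bias estimates proves the proposition.

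I expect the main obstacle to be the $\beta>0$ part of the bias estimate: once $a$ is unbounded the total-variation bound is too lossy, and one genuinely needs the refined local-CLT comparison of the Poisson and binomial point masses, uniformly over $\ell$ and over $m\le n$, together with careful truncation of the polynomially growing weights over the $O(\sqrt{p_\ell m})$-window and control of the residual tails. A secondary but essential point is the $f'$-decay estimate: without extracting the cancellation hidden in $\esp[a_{N+1}-a_N]$ by summation by parts one only gets $|f'|\le C$, which is useless since $\sqrt n/\sigma_n\to\infty$, and it is precisely in forcing $\theta<(\alpha-1)/2$ that the constraint $\beta<\alpha/2$ enters the de-Poissonisation step.
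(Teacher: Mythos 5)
Your decomposition of $f(\tau_m)-g(m)$ into an oscillation term $f(\tau_m)-f(m)$ and a Poissonisation-bias term $f(m)-g(m)$ is precisely the paper's $I_{n,1}+I_{n,2}$ split, so the overall architecture agrees. For the oscillation term your route is genuinely different and cleaner: differentiating $f(s)=\sum_\ell\esp a_{N_\ell(s)}$ to get $f'(s)=\sum_\ell p_\ell\esp[a_{N_\ell(s)+1}-a_{N_\ell(s)}]$, then summation by parts and Cauchy--Schwarz to obtain $\bigl|\esp[a_{N_\ell(s)+1}-a_{N_\ell(s)}]\bigr|\le C\bigl(1\wedge(p_\ell s)^{\beta-1/2}\bigr)$, hence $|f'(s)|\le C\max\{s^{\beta-1/2},s^{\alpha-1}\}$ with exponent strictly less than $(\alpha-1)/2$ precisely because $\beta<\alpha/2$ and $\alpha<1$. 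Combined with the maximal concentration bound for $\tau_m$ and the mean value theorem, this dispenses entirely with the Poisson-vs-Poisson local CLT (Lemma~\ref{lem:local CLT}, estimate \eqref{eq:local difference1}) and with the $\ell\le m_n$ / $\ell>m_n$ cutoff that the paper uses for $I_{n,1}$. That part of your argument checks out and is a real simplification.

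The bias term is where there is a gap. You assert that the local CLT comparison gives $|f(m)-g(m)|\le C(m^{\alpha-1}+m^{\alpha-1/2}+m^\beta)$, which amounts to saying that over the bulk window the point-mass discrepancy is governed solely by the $O(p_\ell)$ relative error. But the local CLT the paper actually proves (\eqref{eq:local difference}) has two error terms, $\tfrac{C}{\sqrt{np}}\bigl(p\log^2 n + \tfrac{\log^4 n}{np}\bigr)$, and the second one does not automatically cancel between the Poisson and binomial expansions in the form stated. If you sum that second term against $|a_k|\le C(mp_\ell)^\beta$ over the bulk window and then over all $\ell$ with $mp_\ell\gtrsim 1$, you get
\[
\sum_{\ell\lesssim m^\alpha}(mp_\ell)^\beta\log(mp_\ell)\cdot\frac{\log^4 m}{mp_\ell}
\ \asymp\ m^{\beta-1}\log^{5}m\sum_{\ell\lesssim m^\alpha}\ell^{(1-\beta)/\alpha}
\ \asymp\ m^{\alpha}\log^{5}m,
\]
which is far bigger than $\sigma_n\asymp n^{\alpha/2}$. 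This is exactly why the paper does not sum the local CLT error over all $\ell$ with $mp_\ell\ge 1$: it introduces an intermediate cutoff $m_n=\floor{n^{\alpha'}}$ with $\alpha'$ chosen to satisfy \emph{both} \eqref{eq:alpha' upper} and \eqref{eq:alpha' lower}, uses the refined local CLT only for $\ell\le m_n$, and switches to a total-variation bound $d_{\rm TV}(N(mp_\ell),B_{m,p_\ell})\le 2mp_\ell^2$ for $\ell>m_n$, balancing the two sources of error. Your sketch neither introduces such a cutoff nor proves the sharper local CLT (tracking the cancellation of the $O(1/(mp))$-order Stirling/Taylor corrections between the two laws) that your three-term bound would require. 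As written, the bias estimate therefore does not go through, and this is the step where the real technical work of the paper lies. Everything else in your proposal is sound.
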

\begin{proof}
We first show 
\equh\label{eq:small t}
\frac1{\sigma_n}\sup_{t\in[0,n^{-(1-\epsilon)}]}\abs{\sif\ell1\pp{\wt q_\ell(n\lambda_n(t)) - q_\ell(\floor{nt})}} \to 0 \mbox{ in probability,} 
\eque
for $\epsilon\in(0,\alpha/2)$. 
We shall show that $\sup_{t\in[0,n^{-(1-\epsilon)}]}\sif\ell1(|\wt q_\ell(nt)| + |q_\ell(nt)| + |\wt q_\ell(n\lambda_n(t))|) = o(\sigma_n)$ 
in probability.
 To see this, we first notice 
\equh\label{eq:1202}
\sup_{t\in[0,n^{-(1-\epsilon)}]}\sif\ell1 \abs{\wt q_\ell(nt)} \le C\sup_{t\in[0,n^{-(1-\epsilon)}]}\sif\ell1 \esp N_\ell(nt) \le C\sif \ell1 \esp N_\ell(n^{\epsilon})\le C \esp N(n^{\epsilon}) = Cn^{\epsilon},
\eque
and 
$\sup_{t\in[0,n^{-(1-\epsilon)}]}\sif \ell1 |q_\ell(nt)| \le Cn^\epsilon$
 can be obtained similarly. It remains to bound $\sup_{t\in[0,n^{-(1-\epsilon)}]}\sif \ell1|\wt q_\ell(n\lambda_n(t))|$. One can first show that 
 \[
 \limn\proba\pp{\sup_{t\in[0,n^{-(1-\epsilon)}]}n\lambda_n(t)>2n^\epsilon} = 0.
 \]
This is because the supremum 
of interest above
 is achieved at $t=n^{-(1-\epsilon)}$ and equals $n\lambda_n(n^{-(1-\epsilon)}) = \tau_{\sfloor{n^\epsilon}}$, which is a Poisson random variable with parameter $\sfloor{n^{\epsilon}}$.
  Then, to show $\sup_{t\in[0,n^{-(1-\epsilon)}]} \sif \ell1\wt q_\ell(n\lambda_n(t))/\sigma_n\to 0$ in probability it suffices to show 
  \[
\sup_{t\in[0,n^{-(1-\epsilon)}]}\sif \ell1\wt q_\ell(n\lambda_n(t)) =   \sup_{t\in[0,2n^{\epsilon}]}\sif \ell1\wt q_\ell(t) = o(\sigma_n),
  \] where the last step follows from \eqref{eq:1202}. We have thus established \eqref{eq:small t}. 

It remains to show that for all $\epsilon>0$, 
\equh\label{eq:t large 0}
\limn\sup_{t\in [n^{-(1-\epsilon)},1]} \frac1{\sigma_n} \abs{\sif\ell1 \pp{\wt q_\ell(n\lambda_n(t)) - q_\ell(\floor{nt})}}= 0.
\eque
To better illustrate the analysis, we first deal with $t=1$ (instead of considering the supremum over $t\in[n^{-(1-\epsilon)},1]$). 
A crucial step is to apply a local central limit theorem controlling $|\wt q_\ell(n)-q_\ell(n)| = |\esp a_{N(np_\ell)} -\esp a_{B_{n,p_\ell}}|$ and also $|\wt q_\ell(n+x)-\wt q_\ell(n)| = |\esp a_{N((n+x)p_\ell)} - \esp a_{N(np_\ell)}|$, where $N(\lambda)$ is a Poisson random variable with parameter $\lambda>0$ and $B_{n,p}$ is a binomial random variable with parameters $(n,p)$. This step is technical and is explained in Section \ref{sec:local CLT}.
\medskip

\noindent {\em Proof for the case $t=1$.}
Write $\lambda_n \equiv \lambda_n(1) = \tau_n/n$. Set
\[
A_n:=\ccbb{\abs{n\lambda_n - n}\le n^{1/2}\log n}.
\]
As a consequence of the central limit theorem we have $\limn\proba(A_n^c) = 0$, and it suffices to show
\equh\label{eq:t=1}
\limn \frac1{\sigma_n} \abs{\sif\ell1 \pp{\wt q_\ell(n\lambda_n) - q_\ell(n)}}\ind_{A_n}= 0, \quad \mbox{ almost surely.}
\eque
We have
\[
\sif \ell1 \pp{\wt q_\ell(n\lambda_n) - q_\ell(n)}\ind_{A_n} =\sif\ell1\pp{\wt q_\ell(n\lambda_n) - \wt q_\ell(n)}\ind_{A_n} + \sif\ell1\pp{\wt q_\ell(n) - q_\ell(n)}\ind_{A_n}=: I_{n,1}+I_{n,2}.
\]

We first deal with $I_{n,1}$. 
Set
\[
d_n := n^{1/2}\log n
\qmand m_n:=\sfloor{n^{\alpha'}}, n\in\N,
\]
for some $\alpha'>\alpha/2$ (and satisfying another condition \eqref{eq:alpha' upper0} below). Notice
\[
\abs{\wt q_\ell(n\lambda_n) - \wt q_\ell(n)}\ind_{A_n} \le \sup_{|x|\le d_n}\abs{\esp a_{N_\ell(n+x)}-\esp a_{N_\ell(n)}} \mbox{ almost surely.}
\]
Write $J_{m}:=\{j\in\N: |j-m|\le d_m\}$.
We have
\begin{align*}
\sup_{|x|\le d_n} & \abs{\esp\pp{a_{N((n+x)p)}\inddd{N((n+x)p)\in J_{np}}} - \esp\pp{a_{N(np)}\inddd{N(np)\in J_{np}}}}\\
 & \le \sum_{k\in J_{np}}|a_k|\sup_{|x|\le d_n} \abs{\proba(N((n+x)p) = k) - \proba(N(np) = k)}\\
 & \le C (np)^\beta \log n\pp{O\pp{\sqrt p\log^2n}+O\pp{\frac{\log^3n}{\sqrt{np}}}},
\end{align*}
where in the second inequality we applied a local central limit theorem in Lemma \ref{lem:local CLT}. 
We also have
\[
 \sup_{|x|\le d_n}\esp\abs{a_{N((n+x)p)}\inddd{N((n+x)p)\notin J_{np}}} + \esp\abs{a_{N(np)}\inddd{N(np)\notin J_{np}}} \to 0
\]
faster than any polynomial rate (uniformly for all $p>n^\epsilon$). Indeed, $|a_j|\le Cj^\beta$ and $\proba(N(np)\notin J_{np})\to 0$ decays at rate $e^{-C\log ^2n}$
 (by Cram\'er--Chernoff bounds \cite{boucheron13concentration}), and one can show for $n$ large enough,  
\[
\proba(N((n+x)p)\notin J_{np}) \le \proba\pp{|N((n+x)p) - (n+x)p| >\frac12\sqrt{(n+x)p}\log((n+x)p)},
\]
which again tends to zero at rate $e^{-C\log^2n}$. 
So, we have shown
\equh\label{eq:I_n11}
\sup_{|x|\le d_n}\summ \ell1{m_n}\abs{\esp a_{N_\ell(n+x)} - \esp a_{N_\ell(n)}}  \le Cn^\beta \summ \ell1{m_n}p_\ell^{1/2+\beta}\log^3n + C\summ \ell1{m_n}\frac{\log^4 n}{(np_\ell)^{1/2-\beta}} + o(n^{\alpha/2}).
\eque
Note that when $1/2+\beta\ge \alpha$, the first summation is of order $O(n^\beta\log ^4n) = o(n^{\alpha/2})$. When, $1/2+\beta<\alpha$, the first summation is of order $n^{\alpha'(1-(1/2+\beta)/\alpha)+\beta}$, which is of order $o(n^{\alpha/2})$ if 
\[
\alpha'<\frac\alpha 2\frac{2\alpha-4\beta}{2\alpha-1-2\beta}. 
\]
The second summation on the right-hand side above is of order
$n^{\alpha'(1+(1/2-\beta)/\alpha) - (1/2-\beta)}\log ^4n$,
which is of order $o(n^{\alpha/2})$ when 
\equh\label{eq:alpha' upper0}
\alpha'<\frac\alpha2\frac{2+2\alpha-4\beta}{1+2\alpha-2\beta}.
\eque
One readily checks that this condition is the most restrictive constraint on $\alpha'$. 
In summary, for all $\beta\in[0,\alpha/2)$, under \eqref{eq:alpha' upper0} the upper bound in \eqref{eq:I_n11} is of order $o(n^{\alpha/2})$ almost surely.

 Next, notice that using $|a_{i+j}-a_i|\le Cj^\beta$, 
\begin{align*}
\sif\ell{m_n} \abs{\wt q_\ell(n\lambda_n) - \wt q_\ell(n)}\ind_{A_n}  &\le \sup_{|u-n|\le d_n} 
\sif\ell{m_n} \esp \abs{a_{N_\ell(u)} - a_{N_\ell(n)}}\\
& \le C\sup_{|u-n|\le d_n} 
\sif\ell{m_n} \esp\pp{|N_\ell(u)-N_\ell(n)|^\beta}\\
& \le C\sif\ell{m_n}\esp\pp{N(d_np_\ell)^\beta}.
\end{align*}
Assume $\alpha'>\alpha/2$. Then, $\limn\sup_{\ell\ge m_n}d_np_\ell = 0$, and therefore
\begin{align}
\sif\ell{m_n} \abs{\wt q_\ell(n\lambda_n) - \wt q_\ell(n)}\ind_{A_n}& \le C\sif\ell{m_n}\esp\pp{N(d_np_\ell)} = Cn^{1/2}\log n\sif\ell{m_n}p_\ell\nonumber\\
&\le Cn^{1/2+\alpha'(1-1/\alpha)}\log n = o(n^{\alpha/2}). \label{eq:I_n12}
\end{align}
We have thus proved, by choosing $\alpha'$ satisfying \eqref{eq:alpha' upper0} and $\alpha'>\alpha/2$,
$
|I_{n,1}| = o(n^{\alpha/2})$
almost surely.

Next, we deal with $I_{n,2}$. We shall again decompose the summation into $\summ\ell1{m_n}$ and $\summ \ell{m_n+1}\infty$ with $m_n = \sfloor{n^{\alpha'}}$. However, this time the choice of $\alpha'$ is not necessarily the same as when dealing with $I_{n,1}$ before (but instead satisfying \eqref{eq:alpha' upper} and \eqref{eq:alpha' lower} below).

We first consider $\ell\le m_n$. 
Recall that $K_{n,\ell}$ is a binomial random variable with parameters $n,p_\ell$ (denoted also by $B_{n,p_\ell}$). 
This time, since $\ell\leq m_n$ implies $p_\ell\in [n^{\alpha'/\alpha},1/2]$ we can apply Lemma \ref{lem:local CLT}. We have
\begin{align*}
& \abs{\esp\pp{a_{N(np)}\inddd{N(np)\in J_{np}}} - \esp\pp{a_{B_{n,p}}\inddd{B_{n,p}\in J_{np}}}}\\
 & \le \sum_{k\in J_{np}}|a_k|\abs{\proba(N(np) = k) - \proba(B_{n,p} = k)}\\
 & \le C (np)^\beta \log n\pp{O\pp{p\log^2 n}+O\pp{\frac{\log^4 n}{np}}}.
\end{align*}
We also have
$\esp\abs{a_{N(np)}\inddd{N(np)\notin J_{np}}} + \esp\abs{a_{B_{n,p}}\inddd{B_{n,p}\notin J_{np}}} \to 0$
faster than any polynomial rate (we have discussed the first convergence before, and the second follows from Bernstein's inequality). Thus,
\begin{align}
\summ\ell1{m_n}\abs{\esp a_{N_\ell(n)} - \esp a_{K_{n,\ell}}}&\le Cn^\beta\log^3n\summ\ell1{m_n}p_\ell^{1+\beta} + Cn^{\beta-1}\log^4n\summ \ell1{m_n}p_\ell^{\beta-1}+ o(n^{\alpha/2})\nonumber\\
& \le C n^\beta\log^3n+Cn^{\beta-1+\alpha'(1+(1-\beta)/\alpha)}\log^4n + o(n^{\alpha/2}).\label{eq:I_n21}
\end{align}
For the upper bound to be of order $o(n^{\alpha/2})$, we need to impose 
\equh\label{eq:alpha' upper}
\alpha'<\frac\alpha2\frac{\alpha+2-2\beta}{\alpha+1-\beta}.
\eque
For $\ell\ge m_n$, we have
 \begin{align*}
& \abs{\esp \pp{a_{N( np)}\inddd{N(np)\le np+ d_{np}}} - \esp \pp{a_{B_{n,p}}\inddd{B_{n,p}\le np+d_{np}}}}
\\
& \le \max_{j\le np+d_{np}}|a_j|\summ j1{\sfloor{np+d_{np}}} \abs{\proba(N(np) = j) - \proba(B_{n,p} = j)} \\
& \le \max_{j\le np+d_{np}}|a_j| d_{\rm TV}\pp{N(np),B_{n,p}} \le \begin{cases}
Cnp^2, & \mbox{ if } p\le n^{-1},\\
C n^{1+\beta}p^{2+\beta}, & \mbox{ if }  p>n^{-1}.
\end{cases}
\end{align*}
where in the second inequality we used the total variation estimate to control the difference of the two expectations: for the total variance distance between Poisson and binomial distributions we used $d_{\rm TV}(N(np),B_{n,p})\le 2np^2$ \citep{arratia89two}. 
We also have $\sabs{\esp (a_{N( np)}\inddd{N(np)>np+d_{np}}) - \esp (a_{B_{n,p}}\inddd{B_{n,p}>np+d_{np}})}\to 0$ faster than any polynomial rate. 
Therefore,
\begin{align}
 \sif\ell{m_n} \abs{\wt q_\ell(n) - q_\ell(n)}  &\le Cn^{1+\beta}\summ \ell{m_n}{\sfloor{n^\alpha}}\ell^{-(2+\beta)/\alpha} + Cn\sif \ell{\sfloor{n^\alpha}+1} \ell^{-2/\alpha}+ o(n^{\alpha/2})\nonumber\\
 & \le
Cn^{1+\beta+\alpha'(1-(2+\beta)/\alpha)}+ Cn^{\alpha-1}\le  Cn^{1+\beta+\alpha'(1-(2+\beta)/\alpha)}+ o(n^{\alpha/2}).\label{eq:I_n22}
\end{align}
One readily checks that the upper bound above is of order $o(n^{\alpha/2})$, if and only if 
\equh\label{eq:alpha' lower}
\alpha'>\frac\alpha2\frac{2-\alpha+2\beta}{2-\alpha+\beta}.
\eque
It remains to check that there exists $\alpha'$ satisfying both constraints \eqref{eq:alpha' upper} and \eqref{eq:alpha' lower}. This is equivalent to the condition $\beta<1-\alpha/2$. But we already imposed $\beta<\alpha/2$, which is a stronger condition. 
That is, we have proved that, by choosing $\alpha'$ satisfying \eqref{eq:alpha' upper} and \eqref{eq:alpha' lower}, $|I_{n,2}| =o(n^{\alpha/2})$.
We have completed the proof of \eqref{eq:t=1}.

  \medskip
  
  \noindent{\em Proof of \eqref{eq:t large 0}.}
Now we extend the above upper bounds to uniform controls in both $n$ and $t$. First, one needs to modify accordingly the definition of $A_n$. 
Recall that $\lambda_n(t) = \tau_{\floor{nt}}/n$ and is piecewise constant.  So we have
\[
\sup_{t\in[0,1]}\abs{n\lambda_n(t) - \floor{nt}}= \max_{j=1,\dots,n}\abs{\tau_j-j}.
\]
This time, we fix $\epsilon>0$ and set
\equh\label{eq:A_ne}
A_{n,\epsilon} :=\bigcap_{j=\floor{n^\epsilon}}^n\ccbb{|\tau_j - j|\le \sqrt j\log j}.
\eque
Recall also that $\tau_j$ is the partial sum of i.i.d.~standard exponential random variables. 
The concentration inequality for exponential random variables \citep{talagrand91new,bobkov97poincare} says that $\proba(
\tau_n-n>t) \le e^{-C\min\{t,t^2/n\}}$, and we also have $\proba(\tau_n-n<-t)\le e^{-t^2/(2(n+t/3))}$ by Bernstein's inequality. Thus, we have
\[
\proba(A_{n,\epsilon}^c) \le \summ k{\sfloor{n^\epsilon}}n e^{-C\log^2 k} \to 0
\]
as $n\to\infty$. Thus, in order to prove \eqref{eq:t large 0} it suffices to prove
\equh\label{eq:large t}
\limn\sup_{t\in [n^{-(1-\epsilon)},1]} \frac1{\sigma_n} \abs{\sif\ell1 \pp{\wt q_\ell(n\lambda_n(t)) - q_\ell(\floor{nt})}}\ind_{A_{n,\epsilon}}= 0, \mbox{ almost surely.}
\eque

The analysis now becomes essentially the same as in the proof of the case $t=1$, except that now parameters $m_n$ and $d_n$ are chosen to depend on $t$. Our upper bounds below are uniform in $n\in\N, t\ge n^{-(1-\epsilon)}$. Set 
\[
m_n(t):=\floor{(nt)^{\alpha'}} \qmand d_n(t):=\floor{nt}^{1/2}\log \floor{nt},
\]
and write
\begin{multline*}
\sif \ell1 \pp{\wt q_\ell(n\lambda_n(t)) - q_\ell(\floor{nt})}\ind_{A_{n,\epsilon}} \\
= \sif\ell1\pp{\wt q_\ell(n\lambda_n(t)) - \wt q_\ell(\floor{nt})}\ind_{A_{n,\epsilon}} + \sif\ell1\pp{\wt q_\ell(\floor{nt}) - q_\ell(\floor{nt})} = :   I_{n,1}(t)+I_{n,2}(t).
\end{multline*}
In place of the analysis of $I_{n,1}$ earlier, we now have
\[
\abs{I_{n,1}(t)}  \le \sup_{|u-\floor{nt}|\le d_n(t)}\summ \ell1{m_n(t)}\abs{\wt q_\ell(u) - \wt q_\ell(\floor{nt})} +\sup_{|u-\floor{nt}|\le d_n(t)} \summ\ell{m_n(t)+1}\infty\abs{\wt q_\ell(u) - \wt q_\ell(\floor{nt})}.
\]
This time we obtain similar bounds as in \eqref{eq:I_n11} and \eqref{eq:I_n12} (with $n$ replaced by $\floor{nt}$), which is almost surely of order $o(n^{\alpha/2})$ under $\alpha'>\alpha/2$ and \eqref{eq:alpha' upper0}.
In place of the analysis of $I_{n,2}$ earlier, we now have
\[
\abs{I_{n,2}(t)}  \le \summ\ell1{m_n(t)}\abs{\wt q_\ell(\floor{nt}) - q_\ell(\floor{nt})} + \summ\ell{m_n(t)+1}\infty \abs{\wt q_\ell(\floor{nt}) - q_\ell(\floor{nt})},
\]
and we can obtain similar bounds as in \eqref{eq:I_n21} and \eqref{eq:I_n22}, which is of order $o((nt)^{\alpha/2})$ under \eqref{eq:alpha' upper} and \eqref{eq:alpha' lower}. 
Combining the two upper bounds we have proved \eqref{eq:large t}. 
\end{proof}

\begin{remark}
Proposition \ref{prop:approx} is much more involved than the corresponding step in earlier examples. In \citep{durieu16infinite}, this step is relatively straightforward (especially for the estimate of $I_{n,2}$) as with $a_j = 1$ or $a_j = \inddd{j\rm~is~odd}$, the function $\wt q_\ell$ and $q_\ell$ have explicit formulae of which one can derive a sharp control on the difference above quickly. See Lemmas 4.2 and 4.4 therein. We also notice in passing that in \citep[Step 4 in the proof of Theorem 3]{chebunin16functional}, the statement $\proba(Y_{n,k}^*(t) = Z_{n,k}^*\mid \Pi(n\tau) = \floor{nt}) = 1$ is incorrect (it is correct if $Y_{n,k}^*$ and $Z_{n,k}^*(\tau)$ both are replaced by the same statistic without centering). This small overlook is easy to fix therein as they considered the simple case $a_{j_0} = 1$ for some prescribed $j_0$ and $a_j = 0$ otherwise. It is also worth pointing out that they did not apply the chaining argument for the tightness of the Poissonized process, while they still needed certain moment estimates on the increments as in our \eqref{eq:increment moment}. 
\end{remark}

\begin{remark}
A natural candidate to replace $A_n$ (instead of $A_{n,\epsilon}$ used in \eqref{eq:A_ne}) is 
\equh\label{eq:A_n}
A_n:=\ccbb{\sup_{t\in[0,1]}n^{1/2}\abs{\lambda_n(t)-t}\le \log n}, n\in\N,
\eque
for which we still have $\limn\proba(A_n^c) = 0$, given $n^{1/2}(\lambda_n(t)-t)_{t\in[0,1]}\weakto (\BB_t)_{t\in[0,1]}$ is $D([0,1])$; consequently the sequence $(\sup_{t\in[0,1]}n^{1/2}|\lambda_n(t)-t|)_{n\in\N}$ is tight.
Notice that restricted to the event $A_n$ defined in \eqref{eq:A_n} we have $|n\lambda_n(t)-nt|\le n^{1/2}\log n$. This control could eventually lead to a uniform control in \eqref{eq:large t} but only for the supremum restricted to $t\in[n^{-(1/2-\epsilon)},1]$. In particular, for $t = O(n^{-1/2})$, the upper bound control is not sharp enough. 
\end{remark}

\subsection{Local central limit theorems for Poisson and binomial distributions}\label{sec:local CLT}
The following lemma provides a refined control on the local central limit theorem for Poisson and binomial distributions. For example, local central limit theorem says that 
$\proba(N(n) = k) = (2\pi n)^{-1/2}\exp\pp{-(k-n)^2/(2n)}+o(n^{-1/2})$ and the error term $o(n^{-1/2})$ is uniformly in $k\in\N$. This error control is not sharp enough for our purposes and instead we shall need the following.
We write $d_n:=n^{1/2}\log n $ and $J_{m}:=[m-m^{1/2}\log m,m+m^{1/2}\log m]\cap\N$.
\begin{lemma}\label{lem:local CLT}
Fix $\epsilon\in(0,1)$. Then, there exists $C>0$ such that
\equh\label{eq:local difference}
\max_{k\in J_{np}}\abs{\proba(N(np) = k) - \proba(B_{n,p} = k)} \le \frac C{\sqrt {np}}\pp{p\log^2n+\frac{\log^4n}{np}},
\eque
and
\equh\label{eq:local difference1}
\sup_{|x|\le d_n}\max_{k\in J_{np}}\abs{\proba(N((n+x)p) = k) - \proba(N(np) = k)} \le \frac C{\sqrt {np}}\pp{\sqrt p\log^2n+\frac{\log^3n}{\sqrt{np}}},
\eque 
for all $n\in\N, p\in[ n^{-\epsilon},1/2]$. 
\end{lemma}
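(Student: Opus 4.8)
The plan is to derive both bounds from one scheme. In each case I would write the quantity whose maximum over $k\in J_{np}$ is sought as $\P(N(np)=k)$ times a ratio of two probabilities, bound $\P(N(np)=k)\le C/\sqrt{np}$ (the standard estimate for the modal atom of a Poisson law, valid since $np\ge n^{1-\epsilon}\to\infty$) and similarly $\P(B_{n,p}=k)\le C/\sqrt{np}$ for $p\le 1/2$, and then show that the logarithm $L_k$ of the relevant ratio satisfies $|L_k|\le Cp\log^2 n$ for \eqref{eq:local difference} and $|L_k|\le C\sqrt p\log^2 n$ for \eqref{eq:local difference1}, uniformly over $k\in J_{np}$, $|x|\le d_n$, and $p\in[n^{-\epsilon},1/2]$. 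Given this, a two-case argument closes the proof: if $|L_k|\le 1$ then $|e^{L_k}-1|\le e|L_k|$ gives the estimate directly after multiplying by $\P(N(np)=k)$; and if $|L_k|>1$, then $p\log^2 n$ (resp.\ $\sqrt p\log^2 n$) exceeds $1/C$, so the crude bound ``difference $\le$ sum of the two probabilities $\le C/\sqrt{np}$'' is already of the claimed order. (In particular neither the $\log^4 n/(np)$ term in \eqref{eq:local difference} nor the $\log^3 n/\sqrt{np}$ term in \eqref{eq:local difference1} is actually produced by this route; they are stated for convenience.)

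For \eqref{eq:local difference} I would start from the exact identity $\P(B_{n,p}=k)/\P(N(np)=k)=\frac{n!}{(n-k)!\,n^k}(1-p)^{n-k}e^{np}$, take logarithms, and insert Stirling's expansion $\log m!=m\log m-m+\tfrac12\log(2\pi m)+O(1/m)$ into $\log n!-\log(n-k)!$. Here I would use the one-sided control from $J_{np}$ together with $p\le 1/2$, namely $k\le np+\sqrt{np}\log n\le 0.6\,n$ for $n$ large, so that $n-k\asymp n$ and the Stirling remainders contribute only $O(1/n)$. Writing $\delta:=k-np$ and $w:=\delta/(n(1-p))\to 0$, a short computation should give
\[ L_k=-\frac{\delta^2}{2n(1-p)}-\tfrac12\log\!\Bigl(1-\tfrac kn\Bigr)+O\!\Bigl(\frac{|\delta|^3}{n^2}\Bigr)+O\!\Bigl(\frac1n\Bigr). \]
Since $|\delta|\le\sqrt{np}\log n$, the first term is $O(p\log^2 n)$ and the $|\delta|^3/n^2$ error is $O(p^{3/2}\log^3 n/\sqrt n)=O(p\log^2 n)$; and $-\tfrac12\log(1-k/n)\ge 0$ is $O(p)$ when $p$ is small and is $O(1)$, hence $\le Cp\log^2 n$, once $p\ge\log^{-2}n$. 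Thus $|L_k|\le Cp\log^2 n$, as required.

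For \eqref{eq:local difference1} the ratio of two Poisson atoms is fully explicit, $\P(N((n+x)p)=k)/\P(N(np)=k)=e^{-xp}(1+x/n)^k$, so that $L_k=\frac{x\delta}{n}-k\bigl(\tfrac xn-\log(1+\tfrac xn)\bigr)$ with $\delta=k-np$. Using $|x|\le d_n=\sqrt n\log n$ and $|\delta|\le\sqrt{np}\log n$ one gets $|x\delta/n|\le\sqrt p\log^2 n$, while $0\le k\bigl(\tfrac xn-\log(1+\tfrac xn)\bigr)\le C k x^2/n^2\le Cp\log^2 n\le C\sqrt p\log^2 n$ (using $|x/n|\to 0$ and $k\le 2np$). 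Hence $|L_k|\le C\sqrt p\log^2 n$, and the two-case argument above finishes the proof.

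The main obstacle is the bookkeeping in the Stirling step of \eqref{eq:local difference}: one must verify that every error term --- the $|\delta|^3/n^2$ remainder, the Stirling $O(1/m)$ terms, and the $-\tfrac12\log(1-k/n)$ piece --- is $O(p\log^2 n)$ \emph{uniformly} over the whole range $p\in[n^{-\epsilon},1/2]$ and $k\in J_{np}$. The subtle point is that for $p$ bounded away from $0$ the ratio $\P(B_{n,p}=k)/\P(N(np)=k)$ does not tend to $1$ (the variances $np(1-p)$ and $np$ differ by a constant factor), so $L_k$ can be as large as $\Theta(\log^2 n)$ there and one cannot reduce to a classical Edgeworth or local-CLT comparison; this is precisely why the argument must be split, with the crude probability bound carrying the large-$p$ regime.
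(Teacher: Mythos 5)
Your proposal is correct, and it takes a route that is related to but cleaner than the paper's. The paper applies Stirling to expand $\proba(N(np)=k)$ and $\proba(B_{n,p}=k)$ (and $\proba(N((n+x)p)=k)$) each into a common leading profile $\frac{1}{\sqrt{2\pi k}}\exp\big(-\frac{(k-np)^2}{2k}-\frac{(k-np)^3}{3k^2}\big)$ times $1+E$, then subtracts; in doing so it keeps the $O(1/k)$ Stirling remainder and the quartic remainder $O((k-np)^4/k^3)$ from each expansion separately, which is precisely where the $\log^4 n/(np)$ and $\log^3n/\sqrt{np}$ terms come from. You instead pass to the ratio, which cancels $k!$ so only one Stirling expansion is needed (with error $O(1/n)+O(1/(n-k))=O(1/n)$, dominated by $p\log^2 n$ since $p\ge n^{-\epsilon}$, $\epsilon<1$), and you combine the bound $|L_k|\le Cp\log^2 n$ (resp.\ $C\sqrt p\log^2 n$) with $|e^{L}-1|\le e|L|$ for $|L|\le 1$ and the trivial bound $\proba(N)+\proba(B)\le C/\sqrt{np}$ for $|L|>1$. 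This two-case split is the right way to handle the regime where $p$ is bounded away from $0$ and $|L_k|$ can be of order $\log^2 n$; the paper handles the same regime implicitly via $|e^{-x}-1|\le x$ for $x\ge0$. Your resulting bound is in fact slightly sharper than stated in the lemma (the $\log^4n/(np)$ and $\log^3 n/\sqrt{np}$ terms are superfluous on your route), which of course still proves the lemma since it only asserts an upper bound. The calculations you sketch do close: $\log n! - \log(n-k)! - k\log n + (n-k)\log(1-p)+np$ with $n-k\asymp n$ on $J_{np}\cap\{p\le1/2\}$ yields your displayed $L_k$; all of $\delta^2/(2n(1-p))$, $|\delta|^3/n^2$, $-\tfrac12\log(1-k/n)$, and $O(1/n)$ are $\le Cp\log^2 n$ uniformly over $p\in[n^{-\epsilon},1/2]$ and $k\in J_{np}$ (and your aside ``once $p\ge\log^{-2}n$'' is unnecessary: for smaller $p$ one has $-\tfrac12\log(1-k/n)=O(p)$ directly). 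The Poisson-ratio computation for \eqref{eq:local difference1} is exact and gives $|L_k|\le C\sqrt p\log^2 n$ as you say.
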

Note that the upper constraint $p\le 1/2$ can be relaxed to $p\le b<1$ for any $b$ fixed. The requirement $p\ge n^{-\epsilon}$ with $\epsilon<1$ is such that $\sup_{p\in[n^{-\epsilon},1/2]}np\to\infty$, which will be used at multiple places. 
\begin{proof} 
We first prove \eqref{eq:local difference}. Recall Stirling's formula
\[
n!  = \pp{\frac ne}^n \sqrt{2\pi n}\pp{1+O(n\inv)}.
\]
We start with 
\[
\proba(N(np) = k)  = \frac{(np)^k}{k!}e^{-np} = \frac1{\sqrt{2\pi k}}\pp{1-\frac{k-np}k}^k e^{k-np}\pp{1+O(k\inv)}.
\]
Notice that 
\begin{align*}
\pp{1-\frac{k-np}k}^k & = \exp\pp{k\log\pp{1-\frac{k-np}k}}\\
& = \exp\pp{-(k-np) - \frac{(k-np)^2}{2k} -\frac{(k-np)^3}{3k^2} + O\pp{\frac{(k-np)^4}{k^3}}}.
\end{align*}
The big O term on the right-hand side above is understood as follows: $O((k-np)^4/k^3)$ stands for a term depending on $n,p$ and $k$, say $r_{n,p,k}$, such that for some constant $C>0$, 
\equh\label{eq:O}
 |r_{n,p,k}|\le \frac{C(k-np)^4}{k^3},
\quad \mfa 
n\in\N, p\in[n^{-\epsilon},1/2], k\in J_{np}.
\eque
Similar interpretations apply below. Therefore, we have shown that
\begin{align}
\proba(N(np) = k) 
& = \frac1{\sqrt{2\pi k}}\exp\pp{-\frac{(k-np)^2}{2k} - \frac{(k-np)^3}{3k^2}}\pp{1+ O(k\inv) + O\pp{\frac{(k-np)^4}{k^3}}}\nonumber\\
\label{eq:Poisson local}
&=
\frac1{\sqrt{2\pi k}}\exp\pp{-\frac{(k-np)^2}{2k} - \frac{(k-np)^3}{3k^2}}\pp{1+ O\pp{\frac{\log^4(np)}{np}}}.
\end{align}

Next, we have
\begin{align*}
\proba(B_{n,p} = k) & = \frac{n!}{k!(n-k)!}p^k(1-p)^{n-k}\\
& = \pp{\frac{np}k}^k\pp{\frac {n(1-p)}{n-k}}^{n-k}\sqrt{\frac n{k(n-k)}}\frac1{\sqrt{2\pi}}\pp{1+O(k\inv)}.
\end{align*}
This time, 
$\sqrt{n/(n-k)} = 1+O\pp{k/(n-k)} = 1+O\pp{p}$,
and
\begin{align*}
k\log{\frac{np}k}&+(n-k)\log\frac{n(1-p)}{n-k}  \\
&= k\log\pp{1-\frac{k-np}k} + (n-k)\log\pp{1+\frac{k-np}{n-k}}\\
& = -\frac{(k-np)^2}{2k} - \frac{(k-np)^3}{3k^2} + O\pp{\frac{(k-np)^4}{k^3}}  - \frac{(k-np)^2}{2(n-k)}+ O\pp{\frac{|k-np|^3}{3(n-k)^2}}.
\end{align*}
Notice also that 
\[
\exp\pp{- \frac{(k-np)^2}{2(n-k)}} = 1+O\pp{\frac{(k-np)^2}{2(n-k)}} = 1+O\pp{p\log^2(np)}.
\]
We therefore have
\begin{multline}\label{eq:binomial local}
\proba(B_{n,p} = k) \\
 = \frac1{\sqrt{2\pi k}}\exp\pp{-\frac{(k-np)^2}{2k} - \frac{(k-np)^3}{3k^2}}
 \pp{1+O\pp{p\log^2(np)}+O\pp{\frac{\log^4(np)}{np}}}.
\end{multline}
Combining \eqref{eq:Poisson local} and \eqref{eq:binomial local}, we have proved the stated result \eqref{eq:local difference}.

Next, we prove \eqref{eq:local difference1}. This time we have
\begin{align*}
\proba(N((n+x)p) = k)&  =  \frac1{\sqrt{2\pi k}}\pp{1-\frac{k-(n+x)p}k}^k e^{k-(n+x)p}\pp{1+O(k\inv)}\\
 & = \frac1{\sqrt{2\pi k}}\exp\pp{ - \frac{(k-(n+x)p)^2}{2k} +O\pp{\frac{(k-(n+x)p)^3}{3k^2}}}.
\end{align*}
The big O term is understood similarly as in \eqref{eq:O} but in addition for all $x\in\R$ such that $|x|\le d_n$. A key observation is that for $p$ small, a typical deviation caused by $x$, $xp$, is $O(n^{1/2}p\log n)$, which is much smaller than the typical deviation $k-np$, which is  $O((np)^{1/2}\log(np))$.  Now,
\begin{align*}
 \frac{(k-(n+x)p)^2}{2k} & = \frac{(k-np)^2}{2k} + \frac{(xp)^2}{2k} - \frac{(k-np)xp}{k} \\
& = \frac{(k-np)^2}{2k} + O\pp{p\log^2 n} + O\pp{\sqrt p\log^2 n},
\end{align*}
and, 
\[
\frac{(k-(n+x)p)^3}{k^2} = O\pp{\frac{(\sqrt{np}\log n)^3}{(np)^2}} = O\pp{\frac{\log^3n}{\sqrt{np}}}.
\]
The stated result now follows.
\end{proof}

\bibliographystyle{apalike}
\bibliography{references,references18}
\end{document}